\newtheorem{theorem}{Theorem}[section]
\newtheorem{definition}{Definition}[section]
\newtheorem{lemma}{Lemma}[section]
\newtheorem{example}{Example}[section]
\newtheorem{remark}{Remark}[section]
\newcommand{\BR}{\mathbb{R}}
\newcommand{\BC}{\mathbb{C}}
\newcommand{\BZ}{\mathbb{Z}}
\newcommand{\cF}{\mathcal{F}}
\newcommand{\cG}{\mathcal{G}}
\newcommand{\ov}{\overline}
\newcommand{\llangle}{\langle \kern -0.2em \langle}	
\newcommand{\rrangle}{\rangle \kern -0.2em \rangle}
\newcommand{\inner}[1]{\left\langle  #1 \right\rangle }
\newcommand{\iinner}[1]{\llangle  #1 \rrangle }
\newcommand{\pr}{\partial}
\newcommand{\la}{\langle}
\newcommand{\ra}{\rangle}
\newcommand{\B}{\tilde{\mathcal{B}}}
\begin{document}
\title[Generalized Fock space]{Generalized Fock-space and fractional derivatives: Uniqueness of Sampling and Interpolation Sets}

\author[ N. Alpay, P. Cerejeiras, and U. K\"ahler]{ N. Alpay$^\dagger$, P. Cerejeiras$^\ddagger$ and U. K\"ahler$^\ddagger$}
\address{$^\dagger$ Department of Mathematics \newline University of California - Irvine, \newline Irvine, CA 92697, U.S.A.
\newline and  \newline
$^\ddagger$ CIDMA - Center for Research and Development in Mathematics and Applications, \newline Department of Mathematics, University of Aveiro \newline Campus Universit\'ario de Santiago \newline 3810-193 Aveiro,
Portugal. }
\keywords{Generalized Fock space, Gelfond-Leontiev derivative, Beurling density, sampling and interpolation sets}
\subjclass[2020]{Primary: 30H20; Secondary: 30E05, 26A33} 
%


\begin{abstract}
In this paper we introduce a Fock space related to derivatives of Gelfond-Leontiev type, a class of derivatives which includes many classic examples like fractional derivatives or Dunkl operators. For this space we establish a modified Bargmann transform as well as density theorems for sampling and interpolation. These density theorems allow us to establish lattice conditions for the construction of frames arising from integral transforms which are linked by the modified Bargmann transform with the Fock space.
\end{abstract}

\maketitle

\section{Introduction}

One principal problem in modern signal and image processing consists the construction of frames arising from discretization of integral transforms (e.g. wavelet and Gabor systems). This problem has its origins in quantum mechanics and in information theory (see J. von Neumann, D. Gabor) where one aims to represent functions in terms of time-frequency atoms which have a minimal support in the time-frequency plane. For practical applications the continuous transforms are substituted by discrete systems in form of frames which  represent a generalization of biorthogonal bases. Usually, frames are obtained by discretizing the parameter space, which leads to the problem of finding conditions for a lattice in the parameter space to be dense enough to create a frame. In the case of Gabor systems K. Gr\"ochenig and Y. Lyubarskii developed a method that makes it possible to find lattices for the construction of Gabor frames with Hermite functions as window functions by connecting the Gabor system with the standard orthonormal basis in the Fock space via the Bargmann transform (see \cite{GL2007,GL2009}). This reduces the problem of finding lattice constants for the frame parameters to the problem of sets of interpolation and uniqueness of entire functions in the Fock space. This last problem has been studied in detail by K. Seip and co-authors, see~\cite{K92, K292, BSK93, Abreu}. 

This construction leads to the question if one can use the similar methods in other situations. However, many of those situations require a different setting. Examples of such are, for instance, the Calogero-Moser dynamical system of one-dimensional N-body problem of N equal particles with a harmonic potential, or applications in mathematical optics, or more general physical processes which require a memory mechanism in the process (see \cite{DDV2009, H2011}). This type of problems motivate an increasing interest in generalized fractional calculus in the last decades. However, here new difficulties arise: while on the one hand fractional calculus is particularly adequate to handle such problems, on the other hand working with fractional derivatives presents a major disadvantage as most of the classical tools are non-existent, e.g., Leibniz formula, chain rule, or translation invariance, to name just a few.

Thus, having this in mind, this method of Lyubarskii turns out to be much more general than the case of Gabor frames seems to indicate. While the classic setting of the Fock space is linked to the classic derivative and multiplication operators we focus our attention on the case of Gelfond-Leontiev derivatives (for further details, see \cite{K1994}). This type of derivatives includes many important examples as special cases, like fractional derivatives of Caputo or Riemann-Liouville type (the latter via a change of the ground state) or difference-differential operators linked to finite reflection groups, also known as Dunkl operators. While the former is being applied in a variety of areas, like fractional mechanics or grey noise analysis in stochastic processes, the latter appears in the study of Calogero-Sutherland-Moser models for n-particle systems. For this type of operators we are going to construct and study the corresponding Fock space and the connected Bargmann transform. This will allow us to establish the necessary density theorems for sampling and interpolation sequences in these Fock spaces under some additional conditions.  To show the applicability of the Gr\"ochenig-Lyubarskii theory we are going to establish lattice conditions for the existence of frames for the corresponding integral transform like in the classic case of the Gabor transform. 
Furthermore, given the importance of the class of operators under consideration we can see applications in quantum mechanics, stochastic analysis, signal processing or other fields, not only for the discussion of frame construction, but also for representation of coherent states in terms of position and momentum operators.

The paper is organized as follows. In Section 2 we recall the definition of a derivative of Gelfond-Leontiev type and present some important examples. In Section 3 we are going to discuss the corresponding Fock space. In Section 4 we present the Bargmann transform in this setting and the density theorems for sampling and interpolation sequences. We prefer to move the necessary proofs into its own Section 5. In the last Section we will present the application of our density theorems to obtain lattice densities for the construction of frames of the corresponding integral transforms.\\

\section{Preliminaries}

We are going to study Fock spaces related to Gelfond-Leontiev operators. To this end let us start with the definition of operators of generalized differentiation and integration with respect to a given entire function.

\subsection{Generalized fractional derivatives}

 Since we are interested in generalized Fock spaces we begin by consider Gelfond-Leontiev operators with respect to an entire function. Before looking into that we want to remark that one could also consider functions analytic in a disk which would lead to Hardy spaces instead of Fock spaces.

\begin{definition}
Let
\begin{equation} \label{Eq:EntireFunction}
\varphi(z) =\sum_{k=0}^\infty \varphi_k z^k,
\end{equation}
be an entire function with order $\rho >0$ and  degree $\sigma > 0,$ that is, such that $\lim_{k \rightarrow \infty} k^{\frac{1}{\rho}} \sqrt[k]{|\varphi_k|} =\left(\sigma e \rho\right)^{\frac{1}{\rho}}.$ We define the Gelfond-Leontiev (GL) operator of generalized differentiation with respect to $\varphi,$ denoted as $D_\varphi,$
as the operator acting on an analytic function $f(z) =\sum_{k=0}^\infty a_k z^k,  |z|<1,$ as
\begin{equation} \label{Eq:p1+p2}
f(z) =\sum_{k=0}^\infty a_k z^k \quad \mapsto \quad D_\varphi f(z) =\sum_{k=1}^\infty a_k \frac{\varphi_{k-1}}{\varphi_k} ~z^{k-1}. \end{equation}
\end{definition}

Hence, under the condition on $\varphi$ that $\lim \sup_{k \rightarrow \infty} \sqrt[k]{\left|\frac{\varphi_{k-1}}{\varphi_k}\right|} =1$ by the Cauchy-Hadamard formula we have that the series in (\ref{Eq:p1+p2})
inherit the same radius of convergence $R>0$ of the original series $f.$

Also, we like to point out that the function $\varphi$ acts as a replacement of the \textit{exponential function} for the Gelfond-Leontiev operator of generalized differentiation. Indeed, $D_\varphi \varphi =\varphi.$

Let us take a look at some examples. The first example is just the classic derivative.

\begin{example}\label{ex21}
	 For $\varphi(z) = e^z$, with $\varphi_k = 1/ \Gamma(k+1)$ for  $k=0,1, 2, \ldots,$. We get:
	$$D_\varphi f(z) = D_\varphi \left( \sum_{k=0}^\infty a_k z^k \right) = \sum_{k=1}^\infty a_k \,k\,z^{k-1} = \partial_z f(z),$$
	since $ \frac{\varphi_{k-1}}{\varphi_k} = \frac{k!}{(k-1)!}=k$. 
\end{example}

The next example is a classic example of a fractional derivative.

\begin{example} \label{exML}
	Let $\varphi$ be the Mittag-Leffler function defined as:
	\begin{equation}\label{Eq:ML_function}
		E_{\frac{1}{\rho},\mu}(z) =\sum_{k=0}^\infty \frac{z^k}{\Gamma\left(\mu +\frac{k}{\rho}\right)}, \qquad \rho>0, ~\mu \in \BC, ~{\rm Re}(\mu)>0,
	\end{equation}
	with coefficients $\varphi_k  =\frac{1}{\Gamma\left(\mu +\frac{k}{\rho}\right)}.$ The operator (\ref{Eq:p1+p2}) becomes then the Dzrbashjan-Gelfond-Leontiev operator:
	$$D_{\rho,\mu} f(z) =\sum_{k=1}^\infty a_k \frac{\Gamma\left(\mu +\frac{k}{\rho}\right)}{\Gamma\left(\mu +\frac{k-1}{\rho}\right)} ~z^{k-1}.$$
\end{example}

That the range of possibilities for fractional derivatives is much larger can be seen in the next example.

\begin{example}
	For 	$\varphi_k=\frac{b a^{\frac{k+1}{b}}}{\Gamma \left( \frac{k+1}{b} \right)}$ with ${\rm Re}(a)>0, b>0,$ and $k=0,1,2, \ldots,$ we have
	\[
	D_\varphi f(z) = \sum_{k=1}^\infty a_k \left[ \frac{b a^{\frac{k}{b}} \Gamma \left( \frac{k+1}{b} \right)}{\Gamma \left( \frac{k}{b} \right) b a^{\frac{k+1}{b}}} \right]z^{k-1}
	= \sum_{k=1}^\infty a_k \frac{\Gamma\left(\frac{k+1}{b}\right) }{ a^{\frac{1}{b} } \Gamma\left(\frac{k}{b}\right) }  z^{k-1}.
	\]
	
	We remark that for $a=b=1$ we have the usual case $\varphi(z) = e^z$. 
	\end{example}

Although in this work we consider entire function, we can also give these additional examples with functions that are not entire.

\begin{example}\label{exBS}
	For $\varphi(z) = \frac{1}{1-z}, ~|z| <1,$  we have $\varphi_k=1$ and so
	\[ D_\varphi f(z) = \left( \sum_{k=0}^\infty a_k z^k \right) = \sum_{k=1}^\infty a_k  z^{k-1} = \frac{f(z)-f(0)}{z}.\]
	
	In this case, the GL operator is also known as \emph{backward-shift} operator.
\end{example}

\begin{example}\label{exELN}
	Consider $\varphi(z) = \sum_{k=0}^\infty  
		\frac{z^k}{\Gamma^{(n)}(k+1)}$, for a fixed $n \in \mathbb{N}.$
	We get
	\[ D_\varphi f(z) = \sum_{k=1}^\infty a_k \left[ \frac{
	\Gamma^{(n)}(k+1)}{
	\Gamma^{(n)}(k)}  \right] z^{k-1}.  \]
		
	For $n=1$ we have $ D_\varphi f(z) = \sum_{k=1}^\infty k a_k  c_{k}  z^{k-1},$ where $c_k$ is a constant depending on the Euler–Mascheroni constant. This example will be re-taken later in the context of creation and annihilation operators related to a fractional Fock space, see example~\ref{ex:bt2}. \end{example}

Those are a few examples of fractional GL-class of derivatives. This class also includes the Caputo and Riemann-Liouville derivatives, the latter being obtained by considering the GL derivative as an operator acting on the ground state $z^{1-\delta}$. But the class of GL derivatives is much broader. We present now an example of a derivative whose connection with GL derivatives may not be so well-known.

\subsection{Dunkl operators}\label{sec:dunkl} Another important example of a generalized differentiation operator of Gelfond-Leontiev type is the case of Dunkl operators, also called differential-difference operators linked to a finite reflection group (see \cite{Roesler}).

These operators are introduced as follows. Given a non-zero vector $\nu \in \mathbb{R}^n$ let $\sigma_\nu (x)$ denote the reflection of a given vector $x\in\BR^n$  on the hyperplane orthogonal to $\nu$. A root system $R$ is a finite set of non-zero vectors in $\mathbb{R}^n$ such that $\sigma_\nu R=R$ e $R\cap \mathbb{R}\nu=\{\pm  \nu \}$ for all $\nu \in R$.  A positive subsystem $R_+$ is any subset of $R$ satisfying $R=R_+\cup(-R_+).$ This implies that $R_+$ and $-R_+$
are separated by a hyperplane passing through the origin.

A Coxeter group (or finite reflection group) $\cG$ is a group generated by the reflections $\sigma_\nu, \nu \in R,$ thus, it is a  subgroup of the orthogonal group $O(n)$. Standard examples are the groups $A_{n-1}$
and $B_n$ (see e.g. \cite{Roesler}, \cite{CKR}).  A multiplicity function $\kappa_\nu$ is a $\cG$-invariant complex-valued function defined on $R$, i.e., $\kappa_\nu=\kappa_{g \nu}$ for all $g\in\cG$. For a chosen
positive subsystem $R_+$ we introduce the index
$$\gamma_\kappa=\sum_{\nu\in R_+}\kappa_\nu, $$
and the weight function
$$h_{\kappa}(x)=\Pi_{\nu \in R_+}|<\nu,x>|^{\kappa_\nu},$$
where $<\cdot, \cdot>$ denotes the Euclidean inner product in $\mathbb{R}^n.$

For each fixed positive subsystem $R_+$ and multiplicity function $\kappa_\nu$ we have, as invariant operators, the Dunkl operators (or differential-difference operators):
$$T_j f(x)=\frac{\partial}{\partial x_j}f(x)+\sum_{\nu \in R_+}\kappa_\nu \frac{f(x)-f(\sigma_\nu x)}{<x,\nu>}\nu_j.$$

Associated to these is the \textit{intertwining operator} which allows to interchange Dunkl derivatives with the usual partial derivatives. Let $\Pi$ denote the space of homogeneous polynomials. Furthermore, let
$\Pi_k$ denote the space of homogeneous polynomials of degree $k.$

\begin{lemma}[\cite{Roesler}]
If the multiplicity function $\kappa$ is such that $\cap_{j} \ker T_j = \mathbb{C}$ then it exists a unique positive linear isomorphism $V_{\kappa} : \Pi \to \Pi,$ denoted as intertwining operator, which satisfies
 \begin{enumerate}
 \item $V_{\kappa} (\Pi_k) \subseteq  \Pi_k;$
 \item $V_{\kappa} \left|_{ \Pi_0} \right. = id;$
 \item $T_j V_{\kappa} = V_{\kappa} \partial_j, ~$ with $V_{\kappa} (1)=1.$
 \end{enumerate}
\end{lemma}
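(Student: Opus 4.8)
The plan is to construct $V_\kappa$ recursively, one homogeneous degree at a time, and to reduce everything to finite-dimensional linear algebra on each space $\Pi_k$. Two structural facts drive the argument. First, the Dunkl operators commute, $T_i T_j = T_j T_i$ for all $i,j$; this is the key algebraic property that makes the intertwining relation (3) consistent. Second, each $T_j$ lowers degree by one, $T_j(\Pi_k) \subseteq \Pi_{k-1}$, exactly as $\partial_j$ does, and the hypothesis $\cap_j \ker T_j = \BC$ says that the only homogeneous polynomials of positive degree killed by every $T_j$ are zero, i.e. $\cap_j \ker T_j \cap \Pi_k = \{0\}$ for $k \geq 1$ (the common kernel is graded, and its degree-$0$ part is forced to be all of $\Pi_0 = \BC$).

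I would start the induction by setting $V_\kappa = \mathrm{id}$ on $\Pi_0 = \BC$, which forces (2) and $V_\kappa(1)=1$. Suppose $V_\kappa$ has been defined on $\Pi_0 \oplus \cdots \oplus \Pi_{k-1}$ as a degree-preserving linear map satisfying $T_j V_\kappa = V_\kappa \partial_j$ there. For $p \in \Pi_k$ I want to define $q := V_\kappa p \in \Pi_k$ by demanding
$$ T_j q = V_\kappa(\partial_j p), \qquad j = 1, \ldots, n, $$
where the right-hand sides are already known since $\partial_j p \in \Pi_{k-1}$. Uniqueness of $q$ is immediate: if $q,q'$ both solve the system then $q - q' \in \cap_j \ker T_j \cap \Pi_k = \{0\}$. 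This simultaneously yields uniqueness of the whole operator and, by the same token, injectivity: if $V_\kappa p = 0$ then $V_\kappa(\partial_j p) = T_j V_\kappa p = 0$, so by the inductive injectivity hypothesis $\partial_j p = 0$ for all $j$, forcing $p$ constant and hence $p=0$ when $k \geq 1$. Since $\dim \Pi_k < \infty$, injectivity gives that $V_\kappa$ is an isomorphism of $\Pi_k$, and property (1) holds by construction.

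The crux is existence of $q$, i.e. showing the data $g_j := V_\kappa(\partial_j p)$ lie in the range of the map $q \mapsto (T_1 q, \ldots, T_n q)$. Here commutativity pays off. I would first verify the compatibility conditions $T_i g_j = T_j g_i$: applying the inductive intertwining relation on $\Pi_{k-1}$ gives $T_i g_j = T_i V_\kappa(\partial_j p) = V_\kappa(\partial_i \partial_j p)$, which is symmetric in $i,j$ because mixed partials commute. It then remains to prove the converse — a Poincaré-type lemma for the commuting family $\{T_j\}$: every tuple $(g_j) \in \Pi_{k-1}^n$ with $T_i g_j = T_j g_i$ has the form $g_j = T_j q$ for some $q \in \Pi_k$. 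I expect this exactness statement to be the main obstacle. One route is a dimension count in the Koszul-type complex $\Pi_k \to \Pi_{k-1}^{n} \to \Pi_{k-2}^{\binom{n}{2}}$ built from the $T_j$: the hypothesis makes the first map injective, so its image has dimension $\dim \Pi_k$, and one must check the space of compatible tuples has the same dimension, matching the classical count for $\partial_j$. A cleaner route is to introduce the Dunkl–Fischer pairing $\langle p, q\rangle := (p(T)q)(0)$ — well defined precisely because the $T_j$ commute — prove its nondegeneracy on each $\Pi_k$, and characterize $V_\kappa$ through it, which converts existence into solvability of a nondegenerate finite-dimensional system.

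Finally, the positivity of $V_\kappa$ is a genuinely deeper matter than the algebraic existence and uniqueness above and does not follow from the recursion; it is Rösler's theorem that $V_\kappa$ admits an integral representation against a positive probability measure, $V_\kappa f(x) = \int f \, d\mu_x$. I would either invoke that result directly or, to keep the discussion self-contained, derive positivity from the positivity-preserving property of the heat semigroup associated with the Dunkl Laplacian. In the present paper this is the one ingredient I would be content to cite from \cite{Roesler} rather than reprove.
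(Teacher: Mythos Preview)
The paper does not prove this lemma at all; it is simply quoted from R\"osler's lecture notes \cite{Roesler} and used as background, so there is no ``paper's own proof'' to compare against. What you have written is essentially the standard inductive construction (originally due to Dunkl) together with the correct attribution of the positivity statement to R\"osler.

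Your outline is sound. One remark on the step you flag as the main obstacle: the dimension count closes cleanly once you use the inductive hypothesis that $V_\kappa$ is already a bijection on $\Pi_{k-1}$. Indeed, the map $(g_j)\mapsto (V_\kappa g_j)$ is then a linear isomorphism of $\Pi_{k-1}^n$, and the intertwining relation on $\Pi_{k-1}$ shows it carries the classical closed tuples $\{(g_j):\partial_i g_j=\partial_j g_i\}$ bijectively onto the Dunkl-closed tuples $\{(h_j):T_i h_j=T_j h_i\}$. Since the classical Poincar\'e lemma for polynomials gives the former space dimension $\dim\Pi_k$, the latter has the same dimension, which equals the dimension of the (injective) image of $q\mapsto(T_jq)_j$. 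This avoids having to set up the Fischer pairing separately. Your treatment of positivity---acknowledging it as the genuinely nontrivial analytic input from \cite{Roesler}---is exactly right.
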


This means that we can express the Dunkl operators  in terms of generalized differentiation operators with respect to the function $\varphi(z)=V_\kappa(e^z).$

For instance, in the rank-one case we have
$$
V_\kappa (z^{2n})=\frac{\left(\frac{1}{2}\right)_n}{\left(\kappa +\frac{1}{2}\right)_n}z^{2n}, \qquad V_\kappa (z^{2n+1})=\frac{\left(\frac{1}{2}\right)_{n+1}}{\left(\kappa+\frac{1}{2}\right)_{n+1}}z^{2n+1},
$$
where $(a)_0=1,$ and $(a)_n=\frac{\Gamma(a+n)}{\Gamma(a)}, ~{\rm Re}(a) >0,$ denotes the Pochhammer symbol, or rising factorial.
This leads to the function
$$
\varphi(z)=e^z {}_1 F_1(\kappa,2\kappa+1;-2z),
$$
with
$$
\varphi_{2n}= \frac{\left(\frac{1}{2}\right)_n}{(2n)!\left(\kappa+\frac{1}{2}\right)_n} \qquad\mbox{and}\qquad \varphi_{2n+1}=\frac{\left(\frac{1}{2}\right)_{n+1}}{(2n+1)!\left(\kappa+\frac{1}{2}\right)_{n+1}}.
$$

\section{Fractional Fock space}

The classical Bargmann-Fock space links to quantum mechanics through the Schr\"odinger equation which describes the evolution of the state of the system by means of the Hamiltonian. In this space the momentum $P$ and position $Q$ operators, which describe the observables, are related by canonical commutation relations as well as by duality. The Fock space $\mathcal F$ is defined as the set of all entire functions $f$ such that $\| f \| < \infty,$ whereas the norm is induced by the inner product
\begin{equation}\label{ClassicalFock}
\langle f, g \rangle = \frac{1}{\pi}\int_{\mathbb C} \overline{f(z)} g(z) e^{-|z|^2}dxdy, \quad f, g \in \mathcal F.
\end{equation}
Hence, $\mathcal F$ can be seen as the reproducing kernel Hilbert space with reproducing kernel given by
	\begin{equation}\label{RKHS_kernel}
k(z,w) = e^{{\ov z} w}.
	\end{equation}
	
Furthermore, the Fock space is the unique Hilbert space of entire functions in which the momentum operator coincides with the classic derivative while the position operator is the multiplicative operator. This establishes a framework for other similar characterizations of spaces of analytic functions such as the Hardy space and Dirichlet space which was done in previous work (N. Alpay~\cite{2}).

Here we are interested in the Fock space related to our GL derivative, which we are going to introduce next. Although $\varphi$ is an entire function with complex coefficients, in the following we additionally assume $\varphi$ to have positive coefficients, that is, $\varphi_n >0$ for all $n,$ in order to ensure positivity of the measure. In fact, it will allow us to obtain a probability measure similar to the classic case.

\subsection{Inner product}\label{subSec:IP}
Given two entire functions $f(z) =\sum_{k=0}^\infty f_k z^k, g(z) = \sum_{k=0}^\infty g_k z^k,$  we consider the following Hilbert spaces

\begin{enumerate}[(i)]
\item the fractional space $\ell^2_\varphi$ of the sequences $f \sim (f_k)_{k=0}^\infty$ and weighted inner product
\begin{equation}\label{Eq:InnerProduct_l}
\inner{f, g}_{2, \varphi} = \sum_{k=0}^\infty  \frac{\ov{f_k} g_k}{ \varphi_k};
 \end{equation}
\item the fractional Fock space $\cF_\varphi$ endowed with the weighted inner product \begin{equation}\label{Eq:InnerProduct_Fock}
\iinner{f, g}_{\cF, \varphi} = \frac{1}{\pi} \int_{\BC} \ov{f(z)} g(z) K_\varphi(-|z|^2) dxdy,
 \end{equation}and where $K_\varphi$ denotes the weight function. 
 We remark that in the classical case  $K_\varphi(-|z|^2)$ is the Gaussian $e^{-|z|^2/2}$.
\end{enumerate}

We aim to identify in an isometric way a function $f(z)= \sum_{k=0}^\infty f_k z^k$ in $\cF_\varphi$ with its sequence of coefficients $(f_k)_{k=0}^\infty$ in $\ell^2_\varphi$. This means that these weighted inner products  should be related by the identity $\inner{f, g}_{2, \varphi} = \iinner{f, g}_{\cF, \varphi}$, i.e.

\begin{equation}\label{Eq:Relation}
\inner{f, g}_{2, \varphi} = \sum_{k=0}^\infty  \frac{\ov{f_k} g_k}{ \varphi_k} = \frac{1}{\pi} \int_{\BC} \ov{f(z)} g(z) K_\varphi(-|z|^2) dxdy = \iinner{f, g}_{\cF, \varphi}.
 \end{equation}

For $f(z) = z^k$ and $g(z)=z^n$ we have
\begin{eqnarray*}
\frac{\delta_{n,k} }{\varphi_n }  = \iinner{z^k, z^n}_{\cF, \varphi}, \nonumber
& = & \frac{1}{\pi} \int_{\BC} \ov{z}^k z^n K_\varphi(-|z|^2) dxdy, \nonumber \\
& = & \frac{1}{\pi} \int_0^\infty  \int_0^{2\pi} r^{k+n} e^{i (n-k) \theta} K_\varphi(-r^2)  d\theta  r dr , \nonumber \\
& = & 2 \delta_{n,k} \int_0^\infty  r^{k+n+1}  K_\varphi(-r^2)  dr.
\end{eqnarray*}
This leads us to
\begin{eqnarray}
\frac{1 }{\varphi_n }  & = & 2 \int_0^\infty  r^{2n+1}  K_\varphi(-r^2)  dr, \nonumber \\
& = &  \int_0^\infty  x^n  K_\varphi(-x)  dx, \quad (x = r^2), \nonumber \\
& = & \mathcal{M}(\tilde K_\varphi) (n+1), \label{Eq: Mellin transform}
\end{eqnarray} where $\mathcal{M}$ denotes the Mellin transform of the weight function $\tilde K_\varphi(x) := K_\varphi(-x)$ evaluated at the point $n+1.$ This reduces the determination of the measure $K_\varphi (-|z|^2) dx dy$ either to an inversion of the Mellin transform or to a Stieltjes moment problem. 
Of course, a sufficient condition for the determination of $K_\varphi$ consists in the Carleman condition
$$
\sum_{n=1}^\infty\frac{1}{\varphi_n^{\frac{1}{2n}}}=+\infty.
$$

Let us first consider the classic case as an example:
\begin{example}\label{exponential} Again, for $\varphi(z) = e^z,$ we have
$$\frac{1}{\varphi_n}= n!  =  \int_0^\infty  x^n  K_\varphi(-x)  dx =  \mathcal{M}(\tilde K_\varphi) (n+1). $$
Moreover, as
$$n! = \Gamma(n+1) = \int_0^\infty  x^n  e^{-x} dx$$
we identify the weight function as
$$K_\varphi(x) = e^{x},$$ and leading to the classic inner product in the Fock space
$$\iinner{f, g}_{\cF, \varphi} = \frac{1}{\pi} \int_{\BC} \ov{f(z)} g(z) e^{-|z|^2} dxdy.$$
\end{example}

\begin{example}
	When $K_\varphi(z)=E_{\frac{1}{\rho},\mu}(z),$ that is, the Mittag-Leffler function defined as in \eqref{Eq:ML_function}
	then the weighted inner product would be given by \eqref{Eq:Relation}
	\[
	\iinner{f, g}_{\cF, \varphi} = \frac{1}{\pi} \int_{\BC} \ov{f(z)} g(z) E_{\frac{1}{\rho},\mu}(-|z|^2) dxdy.
	\]
\end{example}

\begin{example}\label{ex33}
	
	Consider $\varphi(z) = \sum_{k=0}^\infty 
	\frac{z^k}{\Gamma^{(n)}(k+1)}$, for some fixed $n \in \mathbb{N}$ with coefficients $\varphi_k=\frac{1}{\Gamma^{(n)}(k+1)}$.
	
	Using the Mellin transform we get the weight function
	$$K_\varphi (-|z|^2)  =2 e^{-|z|^2} \ln^n |z|. $$

\end{example}

\begin{example}\label{ex34}
	
	Consider $\varphi(z) = \sum_{k=0}^\infty \frac{z^k}{\pi\cot(\pi(k+1))}$, for some fixed $n \in \mathbb{N}$ with coefficients $\varphi_k=\frac{1}{\pi \cot (\pi(k+1) )}$.
	
	Using the Mellin transform we get the weight function
	$$K_\varphi (-|z|^2)  = \frac{1}{1+|z|^2}.$$
	
\end{example}

We can now discuss the multiplication and derivative operators in this Fock space $\cF_\varphi.$ 
As usual the multiplication operator is given by
\begin{equation}\label{Eq:Position}
M_z f (z) := z f(z) = \sum_{k=0}^\infty  f_k z^{k+1} = f_0 z + f_1 z^2 + f_2 z^3 + \cdots
 \end{equation}
defined over the domain $ \mathrm{Dom}(M_z)=\{F\in \mathcal F: zF\in\mathcal{F}\}$.  One can observe that $M_z$ induces a shift in $\ell^2_\varphi.$

Its dual $M^\ast_z$ is defined by
$$
\iinner{M_z f,g}_{\cF, \varphi} = \iinner{ f, M_z^\ast g}_{\cF, \varphi}.
$$
This can be easily calculated by passing to the space $\ell_\varphi^2$:
\begin{gather}
\iinner{M_z f,g}_{\cF, \varphi} =\inner{M_z f,g}_{2, \varphi} = \sum_{k=0}^\infty  \ov{f}_k ~g_{k+1} \frac{1}{ \varphi_{k+1}} = \sum_{k=0}^\infty  \ov{f}_k ~ \left( g_{k+1} \frac{\varphi_{k}}{\varphi_{k+1}}  \right) \frac{1}{\varphi_{k}} = \inner{
f, M_z^\ast g}_{2, \varphi}= \iinner{ f, M_z^\ast g}_{\cF, \varphi}.
\end{gather}
Therefore, the dual $M_z^\ast$ is defined over the domain $ \mathrm{Dom}(M_z^\ast)=\{F\in \mathcal F: D_{\varphi} F\in\mathcal{F}\}$ by
\begin{equation}\label{Eq:Derivative1}
M_z^\ast g (z) := \sum_{k=0}^\infty  g_{k+1}~ \frac{\varphi_{k}}{\varphi_{k+1}} z^k = g_1 \frac{\varphi_{0}}{\varphi_{1}} + g_2 \frac{\varphi_{1}}{\varphi_{2}}  z^2 + g_3 \frac{\varphi_{2}}{\varphi_{3}} z^3 + \cdots
= D_{\varphi} g(z).
 \end{equation} It is also an easy task to prove that this Fock space is the unique space where the associated GL-derivative is the dual operator to the multiplication operator.

We also need to point out that an orthonormal basis $\{e_n\}$ for our Fock space $\cF_\varphi$ is given by $e_n(z)=\sqrt{\varphi_n}z^n$. By looking at the action of $M_z$ and $D_{\varphi}$ on the orthonormal basis it  follows that $\mathrm{Dom}(M_z^\ast)=\mathrm{Dom}(M_z)$.

We are now going to study the reproducing kernel property of our Fock space. We recall that the weight $K_\varphi(-|z|^2)$ has to satisfy the property for $K_\varphi$
$$	\frac{1}{\varphi_n} = \mathcal{M}(K_\varphi(- \cdot))(n+1),$$
whereas $ \mathcal{M}$ denotes the Mellin transform. This relation also induces a discrete reproducing kernel given by
\begin{equation}
	\label{eq: k_varphi}
	k_\varphi(n, k) := \varphi_n \delta_{n,k}, \qquad n, k \in \mathbb N_0,
	\end{equation}	
and we define the corresponding discrete reproducing kernel Hilbert space as 
\begin{equation}
	\label{eq: H(k_varphi)}
\mathcal H(k_\varphi) := \Big\{ \underline f := (f_n)_{n=0}^\infty :  \| \underline f \|_{\ell^2_\varphi}^2  = \sum_{n=0}^\infty \frac{|f_n|^2}{\varphi_n} < \infty  \Big\}.	
\end{equation}

For all sequences $\underline f\in \mathcal H(k_\varphi)$ we have
\begin{equation}
	\label{eq:f_in_H}
	f_n = \la k_\varphi(n, \cdot ), \underline f \ra_{2, \varphi}, \quad n \in \mathbb N_0.
\end{equation}
From the Cauchy Schwarz inequality $|\la f,g\ra_{2, \varphi} | \leq \| f \|_{\ell^2_\varphi} \|g \|_{\ell^2_\varphi}$ and $f(z)=\la k_\varphi(z, \cdot), f \ra_{2, \varphi}$, we have
$$|f(z)|\leq \| f \|_{\ell^2_\varphi}  \| k_\varphi(z, \cdot) \|_{\ell^2_\varphi}.$$

Let us remark that the continuous kernel $K_\varphi(z, w) := \varphi(\ov z w) = \sum_{n=0}^\infty \varphi_n (\ov z w)^n,$ will be a reproducing kernel in the Hilbert space 
\begin{equation}
	\label{eq: H(K_varphi)}
\cF_{\varphi} := \Big\{ f := \sum_{n=0}^\infty f_n z^n :  \iinner{f, f}_{\cF, \varphi}  < \infty  \Big\}.	
\end{equation}

Recall here that $\varphi$ as in (\ref{Eq:EntireFunction}) is an entire function with order $\rho >0$ and degree $\sigma > 0.$  

We look now into the continuous kernel associated to $k_\varphi.$ Combining $\inner{f, g}_{2, \varphi} = \iinner{f, g}_{\cF, \varphi}$ with the reproducing kernel property (\ref{eq:f_in_H}) we obtain
\begin{eqnarray}	\label{eq:ContinuousKernel}
	f(z)  & = & \la k_\varphi(z, \cdot), f \ra_{2, \varphi} \nonumber \\
	& = &   \frac{1}{\pi} \int_{\BC} \ov{k_\varphi(z, w)} f(w) K_\varphi(-|w|^2) dxdy \qquad (w=x+iy) \nonumber \\
	& = &  \frac{1}{\pi} \int_{\BC} \underbrace{\ov{\varphi(\ov z w)} }_{=:} f(w)  K_\varphi(-|w|^2) dxdy \nonumber \\
	& = &  \frac{1}{\pi} \int_{\BC} \ov{\mathbb{K}_\varphi(z, w)} f(w) K_\varphi(-|w|^2) dxdy,
\end{eqnarray} where  $\mathbb{K}_\varphi(z, w) := \varphi(\ov z w) $ denotes the continuous reproducing kernel with respect to the weighted measure $d\mu(w) = K_\varphi(-|w|^2) dxdy$.

\begin{example}\label{exponential2} For $\varphi(z) = e^z,$ we obtain
$$\mathbb{K}_\varphi(z, w) = e^{\ov z w} = \varphi(\ov z w),$$
with the Gaussian weighted measure $d\mu(z) = e^{-|z|^2} dxdy.$
\end{example}

Using our reproducing kernel we have the following characterization of bounded operators on $\mathcal{F}$
\begin{theorem}\label{th:31}
Let $T$ be a bounded operator on $\mathcal F$ and $\mathbb{K}_{\varphi,T}(\overline{z}, w)=T^\ast(\mathbb{K}_\varphi(\overline{z},\cdot))(w)$. Then $\mathbb{K}_{\varphi,T}$ has the following properties
\begin{enumerate}
  \item $\mathbb{K}_{\varphi,T}$ is an entire function on $\mathcal{C}^2$.
  \item $\mathbb{K}_{\varphi,T}(\cdot, w)\in\mathcal{F}$ for all $w$ and $\mathbb{K}_{\varphi,T}(z, \cdot)\in \mathcal{F}$ for all $z$.
  \item $|\mathbb{K}_{\varphi,T}(\overline{z},w)|\leq K_\varphi(|z|^2)K_\varphi(|w|^2)\|T\|$.
  \item $TF(z)=\int_{\mathbb{R}^2}\overline{\mathbb{K}_{\varphi,T}F(w)}K_\varphi(-|w|^2)dxdy$ for all $F\in\mathcal{F}$ and $z\in\mathcal{C}$.
  \end{enumerate}
\end{theorem}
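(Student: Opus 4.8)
The plan is to derive all four properties from the reproducing identity and the boundedness of point evaluation, with no computation beyond Cauchy--Schwarz and one basis expansion. Throughout I write $k_z := \mathbb{K}_\varphi(z,\cdot) = \varphi(\ov z\,\cdot)$, so that (\ref{eq:ContinuousKernel}) becomes $f(z) = \iinner{k_z, f}_{\cF,\varphi}$ for all $f\in\cF$, and (\ref{eq:f_entire}) guarantees that point evaluation is a bounded functional. Reading off the coefficients of $k_z$ and using (\ref{eq:norm_discrete_kernel}) gives $\|k_z\|_{\cF,\varphi}^2 = \sum_{n=0}^\infty \varphi_n|z|^{2n} = \varphi(|z|^2) < \infty$, so $k_z\in\cF$ for every $z$. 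Since $T$ is bounded, so is $T^\ast$ with $\|T^\ast\|=\|T\|$; hence $\mathbb{K}_{\varphi,T}(z,\cdot)=T^\ast k_z\in\cF$ with $\|T^\ast k_z\|_{\cF,\varphi}\le\|T\|\sqrt{\varphi(|z|^2)}$. This already gives the ``$\mathbb{K}_{\varphi,T}(z,\cdot)\in\cF$'' half of (2).

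For the representation (4) I would apply the reproducing identity to $TF\in\cF$ and shift $T$ across the inner product:
\[
(TF)(z) = \iinner{k_z, TF}_{\cF,\varphi} = \iinner{T^\ast k_z, F}_{\cF,\varphi} = \frac1\pi\int_{\BC}\ov{\mathbb{K}_{\varphi,T}(z,w)}\,F(w)\,K_\varphi(-|w|^2)\,dx\,dy,
\]
which is exactly (4) with $\mathbb{K}_{\varphi,T}(z,w)=(T^\ast k_z)(w)$. For the remaining half of (2) I would peel off the adjoint once more via the reproducing identity:
\[
\mathbb{K}_{\varphi,T}(\ov z, w) = (T^\ast k_{\ov z})(w) = \iinner{k_w, T^\ast k_{\ov z}}_{\cF,\varphi} = \iinner{T k_w, k_{\ov z}}_{\cF,\varphi} = \ov{(Tk_w)(\ov z)}.
\]
Hence, for fixed $w$, the map $z\mapsto\mathbb{K}_{\varphi,T}(\ov z,w)=\ov{(Tk_w)(\ov z)}$ is entire (its Taylor coefficients are the complex conjugates of those of $Tk_w\in\cF$) and has $\cF$-norm equal to $\|Tk_w\|_{\cF,\varphi}\le\|T\|\sqrt{\varphi(|w|^2)}$; thus it lies in $\cF$, completing (2). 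The pointwise bound (3) now drops out of the same identity by Cauchy--Schwarz,
\[
|\mathbb{K}_{\varphi,T}(\ov z, w)| = \big|\iinner{Tk_w, k_{\ov z}}_{\cF,\varphi}\big| \le \|T\|\,\|k_w\|_{\cF,\varphi}\,\|k_{\ov z}\|_{\cF,\varphi} = \|T\|\,\sqrt{\varphi(|z|^2)\,\varphi(|w|^2)},
\]
which is the required growth estimate (3).

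Finally, for the joint analyticity (1) I would expand in the orthonormal basis $e_n(z)=\sqrt{\varphi_n}\,z^n$. Setting $c_{mn}=\iinner{Te_m,e_n}_{\cF,\varphi}$ (so $|c_{mn}|\le\|T\|$) and using $k_{\ov z}=\sum_n\sqrt{\varphi_n}\,z^n e_n$, termwise application of the continuous operator $T^\ast$ yields the double Taylor series
\[
\mathbb{K}_{\varphi,T}(\ov z, w) = \sum_{n,m=0}^\infty \sqrt{\varphi_n\varphi_m}\;c_{mn}\,z^n w^m.
\]
Because $\varphi$ is entire, $\sqrt[n]{\varphi_n}\to 0$, whence $\sqrt[n]{\sqrt{\varphi_n}}\to 0$ and $\sum_n\sqrt{\varphi_n}\,t^n$ has infinite radius of convergence; the double series therefore converges absolutely and locally uniformly on $\BC^2$ and defines an entire function, which is (1). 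I expect this to be the one genuinely delicate point: separate holomorphy in each slot is already contained in (2), but upgrading to joint entirety needs either the absolute convergence just described or Hartogs' theorem together with the local boundedness from (3). A secondary matter requiring care is simply the bookkeeping of the holomorphic versus the conjugated slot -- the role of the bar in $\mathbb{K}_{\varphi,T}(\ov z,w)$ -- and the justification that $T^\ast$ may be applied term by term, which is legitimate since the expansion of $k_{\ov z}$ converges in $\cF$ and $T^\ast$ is continuous.
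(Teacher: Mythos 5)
Your proof is correct and is essentially the paper's own proof: the paper disposes of this theorem with a one-line citation to Proposition 1.68 of Folland, and your argument --- the reproducing identity $(TF)(z)=\iinner{k_z,TF}_{\cF,\varphi}=\iinner{T^\ast k_z,F}_{\cF,\varphi}$, the identity $\mathbb{K}_{\varphi,T}(\ov z,w)=\ov{(Tk_w)(\ov z)}$, Cauchy--Schwarz against $\|k_z\|_{\cF,\varphi}=\sqrt{\varphi(|z|^2)}$, and the absolutely convergent double expansion in the basis $e_n(z)=\sqrt{\varphi_n}z^n$ for joint entirety --- is exactly that adaptation, written out with the bookkeeping the paper omits. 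One caveat: what you actually establish in (3) is $|\mathbb{K}_{\varphi,T}(\ov z,w)|\leq\|T\|\sqrt{\varphi(|z|^2)\varphi(|w|^2)}$, a bound in terms of the kernel function $\varphi$, whereas the statement is written with the weight $K_\varphi$; these coincide in the classical case ($\varphi=K_\varphi=\exp$, where your bound is the stronger one), but the paper establishes no pointwise comparison between $\varphi$ and $K_\varphi$ in general, so you should flag that the $\varphi$-version is what Cauchy--Schwarz yields and is evidently what the theorem intends --- the discrepancy lies in the paper's loose statement (compare also the missing arguments and factor $\frac{1}{\pi}$ in its item (4)), not in your reasoning.
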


The proof is a straightforward adaptation of the proof of Proposition (1.68) in~\cite{Folland}. In particular, this means that any bounded operator is determined by the action of its adjoint on the reproducing kernel $ T^\ast_w\varphi(\overline{z}w)$

\subsection{Generalized Bargmann transform}\label{sec:B}

One of the important links of the Fock space to applications is given by the Bargmann transform which allows to transform problems over the space $L^2(\mathbb{R})$ into problems over the Fock spaces which is also closely linked to the Bargmann-Fock representations of the Weyl-Heisenberg group.

It is well known that the system of Hermite functions $h_n:\mathbb{R}\to\mathbb{R}$ given by
$$
h_n(x)=\frac{1}{\pi^{1/4}2^{n/2}\sqrt{n!}}H_n(x)e^{-\frac{x^2}{2}},\quad n \in \mathbb{N}_0
$$
where $H_n$ denote the Hermite polynomials, forms an orthonormal basis in $L^2(\mathbb{R})$. If we map each $h_n = h_n(x)$ into $\frac{z^n}{\sqrt{n!}}$ we get the so-called Bargmann transform:
$$
\mathcal{B} : L^2(\mathbb{R}) \to \mathcal{F}
$$
given by
\[ \mathcal{B} f (z) = \int_\mathbb{R}  \left( \sum_{n=0}^\infty \overline{ h_n(x) }\frac{\pi^{n/2} z^n}{\sqrt{n!}} \right) f(x) dx, 
\quad f \in L^2(\mathbb{R}). 
\]
The calculation of the sum of this series gives the well-known formula for the kernel of the Bargmann transform $k(x,z)=2^{1/4}e^{2\pi xz-\pi x^2 -(\pi/2)z^2}$ so that in closed form the Bargmann transform appears as a double version of the Weierstra\ss  ~transform. It also was as an obvious consequence that the Bargmann transform is a unitary isomorphic mapping between $L^2(\mathbb{R})$ and the Fock space $\mathcal{F}$.

This mapping provides a large number of applications including mapping a windowed Fourier transform of a signal in $L^2(\mathbb{R})$ with a window given by a Hermite function into an analytic function belonging to the Fock space (see \cite{GL2007, J2005}). Moreover, it also allows to consider the pre-image of the annihilation and creation operators in the Fock space, given by $\partial_z$ and $M_z,$ as operators over $L^2(\mathbb{R})$ which in the classic case turns out to be the classic position and momentum operators together with their corresponding coherent states.

In our case we consider the modified Bargmann transform which maps $h_n=h_n(x)$ into $\sqrt{\varphi_n} z^n.$ This correspondence allow us to link the multiplication and derivative operators in the fractional Fock space with creation and annihilation operators in the classic $L^2$-space which leads different types of coherent states such as the squeezed coherent states.

Consider the modified Bargmann transform $\tilde{\mathcal{B}}: L^2(\mathbb{R}) \to \mathcal{F}_\varphi$ given by
\begin{align*}
	\tilde{\mathcal{B}}f(z)&=\int_\mathbb{R}  \left( \sum_{n=0}^{\infty} \overline{h_n(x)} \sqrt{\varphi_n}z^n \right)  f(x)  dx
	=\sum_{n=0}^{\infty} f_n \sqrt{\varphi_n}~ z^n,
\end{align*}
with $f_n=\int_\mathbb{R} \overline{ h_n(x)}  f(x)  dx = \inner{h_n, f}_{L^2(\BR)}$.

Since by construction $\tilde{\mathcal{B}}$ is a unitary operator we have $\tilde{\mathcal{B}}^\ast = \tilde{\mathcal{B}}^{-1}$. Hence, the inverse of the Bargman transform, $\tilde{\mathcal{B}}^{-1},$ can be found in the following manner:
\begin{align*}
	\iinner{ \B g, F}_{\cF,{\varphi}} 	&= \int\limits_{\BC}\overline{\Big[ \int\limits_{\BR} \big( \sum_{n=0}^{\infty} \overline{h_n(t)}\sqrt{\varphi_n}z^n  \Big) g(t) dt \Big]} F(z) d\mu(z)\\
	&=\int\limits_{\BR} \overline{g(t)}\sum_{n=0}^{\infty} \Big( \underbrace{\int\limits_{\BC}  \sqrt{\varphi_n}\overline{z}^n F(z) d\mu(z)}_{:=  F_n  = \iinner{\sqrt{\varphi_n} z^n, F}_{\cF,{\varphi}} } \Big) h_n(t) dt \\
	& = \langle g,  \B^{-1}F \rangle_{L^2(\BR)}.
\end{align*}

By considering the action of the multiplication and GL operators, $M_z$ and $
D_\varphi,$ on $\tilde{\mathcal{B}}f$ we get
\[ M_z\tilde{\mathcal{B}}f(z) = z\tilde{\mathcal{B}}f(z)=\sum_{n=0}^{\infty}  z^{n+1} f_n\sqrt{\varphi_n} =\sum_{n=1}^{\infty}  z^{n} f_{n-1}\sqrt{\varphi_{n-1}} = \sum_{n=1}^{\infty}  z^{n} f_{n-1}\sqrt{\varphi_{n}} \sqrt{\frac{\varphi_{n-1}}{\varphi_{n}}},\]
as well as
\[ 
D_{\varphi}\tilde{\mathcal{B}}f = \sum_{n=1}^\infty z^{n-1}f_n \sqrt{\varphi_n} \frac{\varphi_{n-1}}{\varphi_{n}} = \sum_{n=1}^\infty z^{n-1} f_n \sqrt{\varphi_{n-1}}\sqrt{\frac{\varphi_{n-1}}{\varphi_{n}}}. \]

Thus, we have
\[ a^* \left( \sum_{n=0}^\infty h_{n} f_{n}  \right) = \sum_{n=0}^\infty \sqrt{\frac{\varphi_{n}}{\varphi_{n+1}}} h_{n+1} f_{n},\quad a \left( \sum_{n=0}^\infty h_{n} f_{n}  \right) = \sum_{n=1}^\infty \sqrt{\frac{\varphi_{n-1}}{\varphi_{n}}} h_{n-1} f_{n}.\]

This give us the action of the raising and lowering operators $a^*$ and $a$ on the Hermite functions as
\begin{align}\label{eq:14}
	a^*h_{n-1} & = \sqrt{\frac{\varphi_{n-1}}{\varphi_{n}}}h_n , \qquad
	a h_{n}  = \sqrt{\frac{\varphi_{n-1}}{\varphi_{n}}}h_{n-1}, \qquad n=1,2,\ldots 
\end{align}

Let us emphasize that $\B$ acts now as an intertwining operator in the following way
\begin{align}\label{eq:15}
	\B af &= 
	D_\varphi \B f , \qquad
\B a^*f=z\B f.
\end{align}

Let us give two concrete examples. The first example is not really correct  in our setting since the involved function $\varphi$ is not entire and, hence, the transform is not linked to the Fock space, but to the Hardy space. However, it provides the simple case in which the operators $a$ and $a^*$ appear as forward and backward shift operators, where the corresponding integral transform maps only into the space of analytic functions over the unit disk.
\begin{example}
	Let $\varphi(z) = \frac{1}{1-z}$, where $\varphi_k=1, k=0,1,2,\ldots$. Then from \eqref{eq:14} we have:
	\[ 	a^*h_{n-1} = h_n,\quad 	a h_n = h_{n-1}. \]
\end{example}

Let us now consider a case which fits into our setting.

\begin{example}\label{ex:bt2}
Let us consider $\varphi(z) = \sum_{n=0}^\infty \frac{z^n}{\Gamma'(n+1)}$, where $\varphi_n= 	\frac{1}{\Gamma'(n+1)}, n=0,1,2,\ldots$. 	
	Recall now that $\Gamma(x+1) = x\Gamma(x), x >0$. Using the Digamma function
	$\psi$ we have
	\[ \psi(x) = \frac{d}{d x}\ln[\Gamma(x)]= \frac{ \Gamma'(x) }{\Gamma(x)}, \quad x >0.\]	
	
	The Digamma function is related to the harmonic numbers $H_0=1, H_n = \sum_{k=1}^n \frac{1}{k}, n \in \mathbb{N},$ by
	\[ \psi(n) =  -\gamma + H_{n-1} \quad \Rightarrow \quad   \Gamma'(n+1)  = \Gamma(n+1) [-\gamma + H_{n}] = n! (-\gamma + H_{n}), \]
	where $\gamma$ is the  Euler–Mascheroni constant. Hence,
	\[\frac{\varphi_{n-1}}{\varphi_{n}} = \frac{n!}{(n-1)!}\frac{\left[- \gamma + H_{n}\right]}{\left[- \gamma + H_{n-1}\right]} := n c_n, \quad n=1, 2, \ldots \]
	where the constants $c_n := \frac{\left[- \gamma + H_{n}\right]}{\left[- \gamma + H_{n-1}\right]}$ are such that $\lim_{n\to\infty}c_n=1$. So we get:
	\[ 	a^* h_{n-1} = \sqrt{n-1} c_n h_n,\quad 	a h_n = \sqrt{n-1} c_n h_{n-1}. \]
\end{example}


\section{Density Theorems for Sampling and Interpolations} Using the general framework of the Fock spaces with respect to the Gelfond-Leontiev operator of generalized differentiation, we extend density results of K. Seip \cite{K92} to our setting. This will later on allow us to obtain lattice conditions for frames arising from the corresponding integral transform over $L^2(\mathbb{R})$. Unfortunately, the non-existence of ``good'' quasi-periodic functions - other than the Weierstrass-$\sigma$ function - leads to the need to adapt K. Seip's methods using controlled approximations.\\

In order to present our theorems, we recall the following definitions for sampling and interpolation sets that include the notion of a weight function $K_\varphi$ in order to match our setting.

A discrete set $\Gamma=\{z_j | z_j\in \mathbb C, ~j \in  \mathbb{J} \}$ is a \text{sampling set} of $\cF_{\varphi}$ if it satisfies an appropriated frame condition, that is, if there exists $0< A \leq B < \infty$  such that
\begin{equation}
	\label{eq:sampling_set}
A\|f \|^2_{\cF, \varphi} \leq \sum_{j \in \Lambda} K_\varphi(- |z_j|^2)|f(z_j)|^2\leq B \|f \|^2_{\cF, \varphi}, \quad \mbox{\rm for all }f\in \cF_\varphi.
\end{equation}

The set $\Gamma$ is an \text{interpolation set} of $\cF_\varphi$ if for every $\ell^2$-sequence $(a_j )_{j\in\Lambda}$  satisfying to the growth condition $\sum_{j \in \Lambda} |a_j|^2 K_\varphi(- |z_j|^2)| < \infty,$  there exists $f\in\cF_\varphi$ such that $f(z_j) = a_j, ~j \in \mathbb{J}$.

For a uniformly discrete set $\Gamma$, using Landau's generalizations of Beurling densities we define the upper and lower uniform densities respectively by:
\[ D^+(\Gamma) = \lim_{r\to \infty} \sup \frac{n^+(r)}{2\pi r^2}  \quad \rm{ and }\quad  D^-(\Gamma) = \lim_{r\to \infty} \inf \frac{n^-(r)}{2\pi r^2} \]
where $n^{\pm}$ represent the smallest and largest number of points of $\Gamma$ in a translate $rI$, where $I$ is a fixed compact set of measure $1$.
Then for uniform discrete sets $\Gamma, \Gamma'$ we can extend the following theorems to our framework of generalized Fock spaces by considering two conditions:
\begin{enumerate}[(i)]
	\item For $g(z)$ defined in \ref{eq:g},$|K_\varphi(-|z|^2)g(z)|$ is quasi periodic.
	\item There exists subharmonic $\phi$ and $K_\varphi(-|z|^2)$ is monotone, such that,
	\begin{itemize}
		\item $\phi(z)=-\ln(K_\varphi(-|z|^2))$
		\item 	$\partial_{\bar{z}} \left(\frac{\partial_z K_{\varphi}(-|z|^2)}{K_\varphi(-|z|^2)}\right) 
				\sim \frac{1}{|z|^2}.$
		
	\end{itemize}

\end{enumerate}

Under either condition (i) or (ii), we have the following results.

First of all, we have a characterization of an interpolation set:

\begin{theorem}
	\label{th:sam_and_inter_2}
	Under condition (i) or (ii),
	there exists $\beta_\varphi$ such that 
	$\Gamma$ is a set of interpolation for $\cF_{\varphi}$ if and only if $D^{+}(\Gamma)<\beta_\varphi$.
\end{theorem}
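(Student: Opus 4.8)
The plan is to transfer K. Seip's density machinery for the Bargmann--Fock space \cite{K92} to the weighted space $\cF_\varphi$, using the structural facts already at our disposal. By \eqref{eq:f_entire} every $f\in\cF_\varphi$ is entire of order $2\rho$ and type $\tfrac{\sigma}{2}$; the reproducing kernel is the explicit function $\mathbb{K}_\varphi(z,w)=\varphi(\ov z w)$; and \eqref{eq:norm_discrete_kernel} bounds its diagonal by $\varphi(|z|^2)\le e^{\sigma|z|^{2\rho}}$. Thus $\cF_\varphi$ is a radially weighted space of entire functions with weight $e^{-\psi}$, where $\psi(z):=-\log K_\varphi(-|z|^2)$, and the first task is to control $\psi$. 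Since the Mellin relation \eqref{Eq: Mellin transform} gives $1/\varphi_n=\mathcal{M}(\tilde K_\varphi)(n+1)$ while $\varphi$ has order $\rho$ and type $\sigma$, a saddle-point analysis of this moment sequence should yield $\psi(z)\sim\sigma|z|^{2\rho}$ with enough regularity that $\tfrac{1}{2\pi}\Delta\psi$ is a doubling weight. The Landau--Beurling densities entering $D^\pm$ are then understood relative to $\tfrac{1}{2\pi}\Delta\psi\,dA$, and this is precisely what fixes the critical constant $\beta_\varphi$.

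Granting this reduction, the equivalence splits into the two standard implications, both obtained as in \cite{K92} by passing to weak limits of the translated sets $\Gamma-w$ in the topology of uniform convergence on compacta and comparing local point counts against $\Delta\psi$.

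For necessity I would assume $\Gamma$ interpolating and deduce $D^+(\Gamma)<\beta_\varphi$. Boundedness of the interpolation operator, applied to localized data, forces each weak limit $\Gamma'$ of $\{\Gamma-w\}$ to carry a nonzero element of the growth class vanishing on $\Gamma'$; a Jensen-type estimate of the zero-counting function of this function against $e^{-\psi}$ then gives $n^+(r)<(\beta_\varphi-\varepsilon)\,\tfrac{1}{2\pi}\int_{|z|\le r}\Delta\psi\,dA$ for all large $r$, which is the required strict density bound. For sufficiency I would, conversely, use the slack $\beta_\varphi-D^+(\Gamma)>0$ to construct a bounded right inverse of the restriction map: a Weierstrass-type product over $\Gamma$ produces generating functions whose modulus matches $e^{\psi/2}$ up to the density gap, and superposing these atoms against a datum $(a_j)$ with $\sum_j|a_j|^2K_\varphi(-|z_j|^2)<\infty$ yields $f\in\cF_\varphi$ with $f(z_j)=a_j$, the $\cF_\varphi$-norm being controlled through \eqref{eq:norm_discrete_kernel} and the gap.

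The main obstacle is the weight-asymptotics step. In contrast with the Gaussian case $\psi(z)=|z|^2$, here $\psi$ is known only implicitly through its moments $1/\varphi_n$, so proving $\psi(z)\sim\sigma|z|^{2\rho}$ with uniform control, and in particular establishing the doubling property and regularity of $\Delta\psi$ needed for the Weierstrass construction, is the delicate part. Once the weight is pinned down to this precision, the balayage and weak-limit arguments of \cite{K92} transfer with only cosmetic modifications.
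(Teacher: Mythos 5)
Your plan is structurally faithful to Seip's machinery, and your necessity route (weak limits of the translates $\Gamma-w$ plus a Jensen-type zero count) is a legitimate, even more systematic, way to get the ``only if'' direction than what the paper records; but the proposal has two genuine gaps, and they sit exactly where the generalized setting demands new work. The first is the sufficiency direction, which in your write-up rests on the sentence that ``a Weierstrass-type product over $\Gamma$ produces generating functions whose modulus matches $e^{\psi/2}$ up to the density gap.'' In the Gaussian case this is the Weierstrass $\sigma$-function of Seip--Wallst\'en; here it is precisely the missing object, and constructing it is the paper's central contribution. The classical factors $\left(1-z/\lambda\right)\exp\left(z/\lambda+z^2/(2\lambda^2)\right)$ do not have the correct growth against the weight $K_\varphi(-|z|^2)$ once $\varphi\neq e^z$, so the paper replaces them by $E(z)=(1-z)\,\varphi(\psi_1 z+\psi_2 z^2)$ as in \eqref{eq:analog}, with $\psi_1=1/\varphi_1$ and $\psi_2=(\varphi_1^2-\varphi_2)/\varphi_1^3$ from \eqref{Psis} chosen to force $E(z)=1+O(z^3)$; from these factors it builds the function $g$ of \eqref{eq:g}, proves the two-sided growth estimate of Lemma~\ref{lemma:ineq} (which needs the additional hypothesis that $K_\varphi$ is nonzero and monotone, plus the correction $\gamma(z)$ when $\varphi$ grows slower than the exponential), obtains the Lagrange-type formula of Lemma~\ref{lemma:3.2}, and finally interpolates by an explicit series in the translated functions $g_{-z_{mn}}$, using the isometric translations $T_a$ of \eqref{eq:m}. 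Your proposal assumes the conclusion of this entire chain: nothing in the classical toolbox yields entire functions with prescribed zeros whose modulus tracks $K_\varphi(-|z|^2)^{-1/2}$, and producing them requires an ansatz adapted to $\varphi$.

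The second gap is that your reduction quietly changes the statement. You propose to read the densities $D^{\pm}$ relative to $\frac{1}{2\pi}\Delta\psi\,dA$, where $\psi=-\log K_\varphi(-|z|^2)\sim\sigma|z|^{2\rho}$. But the $D^{\pm}$ in Theorem~\ref{th:sam_and_inter_2} are the uniform Beurling--Landau densities of the paper, counting points against Euclidean area $2\pi r^2$. If $\rho\neq 1$ then $\Delta\psi\asymp|z|^{2\rho-2}$ is not comparable to a constant, so your densities and the paper's differ by an unbounded factor, and your argument would prove a theorem about a different (weighted) density rather than the stated inequality $D^{+}(\Gamma)<\beta_\varphi$. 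In the case where the two normalizations agree, the saddle-point analysis of the moment relation \eqref{Eq: Mellin transform} and the doubling property of $\Delta\psi$ — which you yourself flag as the delicate step — are still unproven, and both of your implications (Jensen count for necessity, atomic synthesis for sufficiency) are conditional on them. So as it stands neither direction of the equivalence is established: the sufficiency half lacks the generalized $\sigma$-function, and the necessity half lacks the weight control it is calibrated against.
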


For the characterization on a sampling ser we have the following theorems.
\begin{theorem}
\label{th:sam_and_inter_1}
Under condition (i) or (ii),
there exists $\beta_\varphi$ such that
$\Gamma$ is a set of sampling for $\cF_{\varphi}$ if and only if it can be expressed as a finite union of uniformly discrete sets and contains a subset $\Gamma'$ s.t. $D^{-}(\Gamma')> \beta_\varphi$.
\end{theorem}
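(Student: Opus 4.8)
The plan is to reduce the sampling characterization for $\cF_\varphi^2$ to the corresponding known result for the classical Bargmann-Fock space, rather than to redo Seip's potential-theoretic machinery from scratch. The key observation established earlier in the excerpt is that every $f\in\mathcal H(k_\varphi)=\cF_\varphi^2$ is an entire function of order $2\rho$ and degree $\frac\sigma2$, with the pointwise bound $|f(z)|\le\|f\|_{2,\varphi}\,e^{\frac\sigma2 r^{2\rho}}$ for $|z|\le r$, and that the reproducing kernel is $\mathbb K_\varphi(z,w)=\varphi(\ov z w)$ with weight $K_\varphi(-|w|^2)$ satisfying $\|k_\varphi(z,\cdot)\|_{2,\varphi}^2=\varphi(|z|^2)$. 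The sampling inequality \eqref{eq:sampling_set} is exactly a frame inequality for the normalized reproducing kernels $K_\varphi(-|z_j|^2)^{1/2}\mathbb K_\varphi(z_j,\cdot)$, so the whole problem is governed by the growth of $\varphi$ on the positive axis, encoded by the order $\rho$ and degree $\sigma$. First I would make precise the critical density $\beta_\varphi$ appearing in both theorems: it should be read off from the exponent in the weight, i.e.\ from the asymptotics $\varphi(r^2)\asymp e^{\sigma r^{2\rho}}$, so that $\beta_\varphi$ plays the role that $\frac1\pi$ (the density $\pi^{-1}$) plays in the classical case where the weight is $e^{-|z|^2}$.

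The main step is to transfer Seip's theorem through the structure already set up. Since the admissibility of a uniformly discrete $\Gamma$ is determined entirely by the subharmonic weight $\phi(z):=-\log K_\varphi(-|z|^2)$, I would verify that $\phi$ is radial and that its Laplacian $\Delta\phi$ (the ``invariant density'' of the weight) has the right asymptotic behaviour so that Seip's density theorem applies with critical constant $\beta_\varphi$ equal to $\frac{1}{2\pi}\Delta\phi$ in the limit. The necessity direction (that a sampling set must contain a uniformly discrete subset with $D^-(\Gamma')>\beta_\varphi$, and be a finite union of uniformly discrete sets) follows from the upper frame bound together with a Plancherel-Pólya type estimate: the upper bound in \eqref{eq:sampling_set} forces local finiteness, hence the finite-union-of-uniformly-discrete structure, exactly as in the classical argument using the pointwise bound \eqref{eq:f_entire}. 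For sufficiency I would invoke the companion interpolation result, Theorem~\ref{th:sam_and_inter_2}, together with a perturbation/comparison argument showing that strict density excess $D^-(\Gamma')>\beta_\varphi$ yields the lower frame bound.

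The genuinely delicate point, and the one I expect to be the main obstacle, is justifying that the generalized weight $K_\varphi$ behaves regularly enough for Seip's theory to apply verbatim. In the classical case the weight $e^{-|z|^2}$ has constant Laplacian, which is precisely what makes the critical density a single universal number; here $K_\varphi(-|z|^2)$ is only recovered from $1/\varphi_n=\mathcal M(\tilde K_\varphi)(n+1)$ via Mellin inversion or the Stieltjes moment problem, and need not have constant (or even asymptotically constant) Laplacian. One must therefore control $\Delta\phi$ uniformly, which is where the hypotheses that $\varphi$ has a genuine order $\rho$ and degree $\sigma$ (and the Carleman condition guaranteeing $K_\varphi$ is determined) do the real work. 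I would isolate this as a lemma: under the order/degree hypothesis, $\phi$ is a radial subharmonic weight whose Laplacian is doubling and asymptotically proportional to $r^{2\rho-2}$, so that the Beurling-Landau densities $D^\pm$ measured against $\frac{1}{2\pi}\Delta\phi$ reduce to the stated limits against $2\pi r^2$ after the substitution dictated by the order $2\rho$. Once that regularity is in hand, both the necessity and the sufficiency directions follow from Seip's characterization applied to the weight $\phi$, with $\beta_\varphi$ the resulting critical value.
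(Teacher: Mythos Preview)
Your proposal takes a genuinely different route from the paper. The paper does \emph{not} reduce to a general weighted-Fock-space sampling theorem via regularity of $\Delta\phi$; instead it rebuilds Seip--Wallst\'en's original machinery from scratch in the $\varphi$-setting. Concretely, the paper introduces a \emph{fractional Weierstrass factor}
\[
E(z)=(1-z)\,\varphi(\psi_1 z+\psi_2 z^2),\qquad \psi_1=\tfrac{1}{\varphi_1},\ \psi_2=\tfrac{\varphi_1^2-\varphi_2}{\varphi_1^3},
\]
chosen so that $E(z)=1+z^3\Omega(z)$, builds the associated $\sigma$-type product $g(z;\Gamma)$, proves the two-sided growth estimate $c_1\gamma(z)e^{-c|z|\log|z|}\mathrm{dist}(z,\Gamma)\le |K_\varphi(-|z|^2)g(z)|\le c_2\gamma(z)e^{c|z|\log|z|}$ (their Lemma~\ref{lemma:ineq}), derives a Lagrange-type interpolation formula (Lemma~\ref{lemma:3.2}), and then uses an isometric translation $T_af(z)=\sqrt{K_\varphi(-|z-a|^2)/K_\varphi(-|z|^2)}\,f(z-a)$ together with Cauchy--Schwarz to obtain the lower sampling bound. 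In other words, the paper's key lemma is a $\varphi$-adapted analogue of the Weierstrass $\sigma$-function and its growth, not a regularity statement about $K_\varphi$.

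What each approach buys: the paper's constructive route avoids precisely the obstacle you flag as ``the genuinely delicate point,'' namely that $K_\varphi$ is only known implicitly through $\mathcal M(\tilde K_\varphi)(n+1)=1/\varphi_n$, so proving that $\phi=-\log K_\varphi(-|\cdot|^2)$ has doubling Laplacian with the correct asymptotics is not at all immediate from the order/degree hypothesis on $\varphi$ alone. Your plan is conceptually cleaner and would be shorter \emph{if} that regularity lemma goes through, but you have not indicated how to extract the needed control on $\Delta\phi$ from Mellin data, and the Carleman condition only guarantees uniqueness of $K_\varphi$, not smoothness or doubling of its log-Laplacian. The paper's approach trades that analytic difficulty for an explicit (if somewhat heavier) construction in which all estimates are made directly in terms of the coefficients $\varphi_n$.
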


\begin{theorem}
\label{th:sam_and_inter_3}
Under condition (i) or (ii),
$\Gamma$ is a sampling set for $\cF_\varphi$ if and only if can
it contains a
subset $\Gamma'$ and $D^{-}(\Gamma')>{\beta_\varphi}$.
\end{theorem}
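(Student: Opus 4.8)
The plan is to obtain Theorem~\ref{th:sam_and_inter_3} from the already-established Theorem~\ref{th:sam_and_inter_1}, exploiting the standing assumption that $\Gamma$ is uniformly discrete. Under that assumption the clause ``$\Gamma$ can be expressed as a finite union of uniformly discrete sets'' in Theorem~\ref{th:sam_and_inter_1} is automatically satisfied, since a uniformly discrete $\Gamma$ is trivially such a union (of a single set). Thus the forward implication is immediate: if $\Gamma$ is a sampling set, Theorem~\ref{th:sam_and_inter_1} already supplies a subset $\Gamma'\subseteq\Gamma$ with $D^-(\Gamma')>\beta_\varphi$. For the converse, if $\Gamma$ contains a subset $\Gamma'$ with $D^-(\Gamma')>\beta_\varphi$, then both hypotheses of Theorem~\ref{th:sam_and_inter_1} hold and $\Gamma$ is sampling. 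Hence the finite-union clause is redundant for uniformly discrete $\Gamma$, and the two statements coincide.

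This reduction, however, only transfers the difficulty to the two frame estimates underlying Theorem~\ref{th:sam_and_inter_1}, which I would establish by adapting Seip's density machinery \cite{K92} to the radial weight $K_\varphi(-|z|^2)$. The upper (Bessel) bound follows from uniform discreteness together with the reproducing-kernel norm $\|k_\varphi(z,\cdot)\|_{2,\varphi}^2=\sum_{n}\varphi_n|z|^{2n}\le\varphi(r^2)\le e^{\sigma r^{2\rho}}$ for $|z|\le r$, and the growth estimate $|f(z)|\le\|f\|_{2,\varphi}\,e^{\frac{\sigma}{2}r^{2\rho}}$ from \eqref{eq:f_entire}: these let one control the overlaps of the normalized kernels $k_\varphi(z_j,\cdot)/\|k_\varphi(z_j,\cdot)\|_{2,\varphi}$ and sum a convergent Bessel series. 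The lower bound is the substantive point, and amounts to the sufficiency statement that a uniformly discrete $\Gamma'$ with $D^-(\Gamma')>\beta_\varphi$ is itself sampling. I would attack this by the Beurling--Seip scheme: comparison with explicitly constructed reference configurations of the critical density $\beta_\varphi$, the latter being read off from the asymptotic concentration of the measure $K_\varphi(-|z|^2)\,dx\,dy$, combined with a necessity estimate obtained from Jensen's formula, which relates the number of zeros of a nonzero $f\in\cF_\varphi$ in a disk of radius $r$ to its order $2\rho$ and degree $\frac{\sigma}{2}$.

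The main obstacle is that $\cF_\varphi$ is not homogeneous: the Gaussian weight of the classical Fock space is replaced by the general radial weight $K_\varphi(-|z|^2)$, so the Weyl--Heisenberg translations that render the classical sampling problem invariant, and that drive Seip's weak-limit argument, are no longer isometries. One must therefore substitute a mechanism adapted to the rotational symmetry and to the precise radial growth of the weight. The delicate case is $\rho\neq1$, where the curvature of the weight (the Laplacian of $-\log K_\varphi(-|z|^2)$) is non-constant, so the local Nyquist density depends on $|z|$; reconciling this with a single scale-invariant density built from $\frac{n^\pm(r)}{2\pi r^2}$ and pinning down the exact constant $\beta_\varphi$ is where I expect the bulk of the effort, and any supplementary regularity hypotheses on $\varphi$, to be concentrated.
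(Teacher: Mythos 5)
Your first paragraph is correct and takes a genuinely different route from the paper. The paper's entire proof of Theorem~\ref{th:sam_and_inter_3} is the sentence ``one could prove Theorem~\ref{th:sam_and_inter_3} in a similar manner,'' i.e.\ it proposes re-running the machinery used for Theorem~\ref{th:sam_and_inter_1} (Beurling's reduction to a set uniformly close to a square lattice, the weighted translations \eqref{eq:m}, the Lagrange-type formula of Lemma~\ref{lemma:3.2}, Cauchy--Schwarz, and Lemma~\ref{lemma:ineq}). You instead deduce Theorem~\ref{th:sam_and_inter_3} formally from Theorem~\ref{th:sam_and_inter_1}: since the paper states both results under the standing hypothesis that $\Gamma$ is uniformly discrete, the ``finite union of uniformly discrete sets'' clause is automatic and the two theorems collapse into the same assertion; your two implications are then immediate. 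This is more economical, and it exposes a real feature of the paper's formulation: the distinction between the two theorems is only meaningful when the first is stated, as in Seip's original work, for general discrete sets, where the finite-union condition is what delivers the Bessel (upper) bound. Your remaining paragraphs, which sketch a re-proof of Theorem~\ref{th:sam_and_inter_1} itself, are not needed for the reduction, but the obstacle you flag there is a genuine weak point of the paper that your shortcut quietly inherits: the paper's proof of Theorem~\ref{th:sam_and_inter_1} hinges on the claim that $T_a$ in \eqref{eq:m} acts isometrically on $\cF_\varphi^2$, yet the prefactor $\sqrt{K_\varphi(-|z-a|^2)/K_\varphi(-|z|^2)}$ is not the modulus of an entire function for general $\varphi$ (only the Gaussian weight has this property), so $T_a f$ need not be entire and $T_a$ does not map the Fock space into itself; the paper's change-of-variables computation establishes invariance of the weighted $L^2$ integral only. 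Your assessment of where the substantive difficulty lies --- the loss of homogeneity and the possibly $|z|$-dependent local Nyquist density when $\rho\neq 1$ --- is therefore accurate, and it is precisely what the paper's own argument leaves unresolved.
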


Proofs of these theorems are given in the following section.

\subsection{Necessary Conditions for Interpolation and Sampling}
We provide a generalization of the necessary condition for interpolation and sampling results, ,namely Lemma 4.1 and Theorem 5.1 receptively, from \cite{Escudero}.

\begin{lemma}\label{lemma:1.1}
	Let multiplicities pair $(\Lambda,m_{\lambda})$, $m_\lambda\to \mathbb{N}$, $\lambda\in \Lambda\subseteq \mathbb{C}$ and $\{c_{\ell} \;:\; \ell =0,\dots , m_{\Lambda}(\lambda)-1  \}\subset \mathbb{C}$.
	Then for  $\epsilon>0$, there exists $f_\lambda: B(\lambda,\epsilon)\to \mathbb{C}$ analytic for all $z\in B(\lambda,\epsilon)$, the disk centered at $\lambda$ and radius $\epsilon$, such that
	\[ D_\varphi^{(j)}  (f_\lambda (\lambda)) = c_j, \quad 0\leq j\leq m_\Lambda(\lambda) -1,\]
	and 
	\[ |f_\lambda (z)|^2 \leq C_\epsilon \sum_{k=0}^{m_{\Lambda}(\lambda)-1} |c_k|^2 . \]
\end{lemma}
\begin{proof}
	Consider the function
	\[ p_N (z) = \sum_{n=0}^{m_{\Lambda}(\lambda)-1} a_n (z-\lambda)^n.  \]
	with $N=m_{\Lambda}(\lambda)-1$. We want to determine $a_n\in\mathbb{R}$ such that 
	\[ D_\varphi^{(n)}  (p_N (\lambda)) = c_n.  \]	
	
	Unfortunately, we cannot simply restrict ourselves to the case of $\lambda=0$ since our fractional derivatives are not translation invariant.
	For $z=\lambda$ we have
	\begin{equation}\label{eq:D_phi}
		\begin{aligned}
			D_\varphi^{(m)} (p_N(\lambda))  &= \sum_{k=m}^{N}\left[  \sum_{n=k}^{N} a_n{n\choose k} (-\lambda)^{n-k}  \right] \frac{\varphi_{k-m}}{\varphi_{k}}\lambda^{k-m}\\
			&= \sum_{k=m}^{N}  \sum_{n=k}^{N} a_n{n\choose k} (-1)^{n-k} (\lambda)^{n-m} \frac{\varphi_{k-m}}{\varphi_{k}}.
		\end{aligned}
	\end{equation}
	Considering the equation for each $m=0,1,2,\dots N\in\mathbb{N}$ we obtain a system of equations that can be written in the following matrix form
	\begin{equation}\label{eq:matrix}
		\begin{pmatrix}
			&p_N(\lambda)\\
			&({D_\varphi}) (p_N(\lambda))\\
			&({D_\varphi})^{(2)} (p_N(\lambda))\\
			&\vdots\\
			&({D_\varphi})^{(N)} (p_N(\lambda)
		\end{pmatrix}
		=
		\underbrace
		{\begin{pmatrix}
				1 & C_{1,1} & C_{1,2} & \dots  & \dots & \dots & C_{1,N}\\
				0 & \frac{\varphi_0}{\varphi_{1}} & C_{2,2} & \dots & \dots  & \dots & C_{2,N}\\ 
				0 & 0 &  \frac{\varphi_0}{\varphi_{2}} & C_{3,3} & \ddots  & \ddots & C_{2,N}\\
				\vdots & \vdots & \vdots & \ddots & \ddots & \ddots & \vdots \\
				0 & 0  & 0 & \dots & \dots & 0 & \frac{\varphi_{0}}{\varphi_{N}}
		\end{pmatrix}}_{:=A}
		\begin{pmatrix}
			&a_0\\
			&a_1\\
			&a_2\\
			&\vdots\\
			&a_N
		\end{pmatrix}
	\end{equation} 
	where 
	\[C_{m,n} = \sum_{k=m}^{n} {n\choose k} (-1)^{n-k} \lambda^{n-m} \frac{\varphi_{k-m}}{\varphi_k}. \] 
	We note that our matrix $A$ is an upper triangular matrix, hence the inverse of $A$ can be found using 
	\[ A^{-1} = \left[ I_{N\times N} + D^{-1} (A-D) \right] D^{-1} \]
	where $D^{-1}$ is the inverse of the diagonal. We get the following inverse
	\begin{equation}\label{eq:Ainv}
		A^{-1} = \begin{pmatrix}
			1 & \frac{\varphi_{1}}{\varphi_{0}}C_{1,1} & \frac{\varphi_{2}}{\varphi_{0}}C_{1,2} & \frac{\varphi_{3}}{\varphi_{0}}C_{1,3} & \dots & \frac{\varphi_{N}}{\varphi_{0}}C_{1,N}\\
			0 & \frac{\varphi_1}{\varphi_0} & \frac{\varphi_{2}}{\varphi_{0}}\frac{\varphi_{1}}{\varphi_{0}}C_{2,2} & \frac{\varphi_{3}}{\varphi_{0}}\frac{\varphi_{1}}{\varphi_{0}}C_{2,3}  &\dots & \frac{\varphi_{N}}{\varphi_{0}}\frac{\varphi_{1}}{\varphi_{0}}C_{2,N}  \\
			0 & 0 & \frac{\varphi_2}{\varphi_0} & \frac{\varphi_{3}}{\varphi_{0}}\frac{\varphi_{2}}{\varphi_{0}}C_{3,3} & \dots &  \frac{\varphi_{N}}{\varphi_{0}}\frac{\varphi_{2}}{\varphi_{0}}C_{3,N}  \\
			\vdots & \vdots & \vdots & \ddots & \ddots & \vdots \\
			0 & 0 & 0 &  \dots & \frac{\varphi_{N-1}}{\varphi_0} & \frac{\varphi_{N}}{\varphi_0}\frac{\varphi_{N-1}}{\varphi_{0}}C_{N-1,N} \\
			0 & 0 & 0 &  \dots & 0 & \frac{\varphi_N}{\varphi_0}
		\end{pmatrix}. 
	\end{equation}
	
	This allows us to find our coefficients $a_n$ for each $0\leq n\leq N$ by simply applying the inverse matrix \eqref{eq:Ainv} to the left of both sides in \eqref{eq:matrix}, which yields the formula
	\begin{equation}\label{eq:an}
		a_n = \frac{\varphi_{n}}{\varphi_{0}} D^{(n)}_{\varphi} (p_N(\lambda)) + \sum_{k=n}^{N} \frac{\varphi_{k+1}}{\varphi_0}\frac{\varphi_{k}}{\varphi_0}C_{n,k}({D_\varphi})^{(k)} (p_N(\lambda)).
	\end{equation}
	and the interpolation polynomial
	$$
	p_N (z)= \sum_{n=0}^{m_{\Lambda}(\lambda)-1} \left[ \frac{\varphi_{n}}{\varphi_{0}} D^{(n)}_{\varphi} (p_N(\lambda)) + \sum_{k=n}^{N} \frac{\varphi_{k+1}}{\varphi_0}\frac{\varphi_{k}}{\varphi_0}C_{n,k}({D_\varphi})^{(k)} (p_N(\lambda))\right] (z-\lambda)^n
	$$
	
	In this way we obtain the following interpolation function $f_\lambda$ given by
		\begin{equation}\label{eq:inter_f}
		f_\lambda (z)= \sum_{n=0}^{m_{\Lambda}(\lambda)-1} \left[ \frac{\varphi_{n}}{\varphi_{0}} c_{n}+ \sum_{k=n}^{m_\Lambda(\lambda) -1} \frac{\varphi_{k+1}}{\varphi_0}\frac{\varphi_{k}}{\varphi_0} 
		\lambda^{k-n}  
		\sum_{l=n}^{k} {k \choose l} (-1)^{k-l} \frac{\varphi_{l-n}}{\varphi_l} c_k 
		\right] 
		(z-\lambda)^n
	\end{equation}

	Additionally, we remark that 
	\[ |a_n|^2 \leq C_\epsilon 	\sum_{k=0}^{n} |c_k|^2 \]
	where $C_\epsilon$ is the norm of the matrix $A^{-1}$ in \eqref{eq:Ainv}, hence we get 
	\[ |f_\lambda (z)|^2 = \sum_{n=0}^{m_{\Lambda}(\lambda)-1} a_n (z-\lambda)^n \leq C_\epsilon \sum_{k=0}^{m_{\Lambda}(\lambda)-1} |c_k|^2 . \]
\end{proof}

Following the above lemma, we provide a generalization of Proposition 5.2 in~\cite{Escudero}.
\begin{theorem}\label{prop5.2}
Let $(\Delta,m_{\Delta})$ be a separated set with multiplices that compatible with $\varphi$.
Let $(\Delta,m_{\Delta})$ be an interpolation set of $\cF_{\varphi}(\mathbb{C})$, and $\sup_{\lambda\in\Lambda}m_{\Lambda}(\lambda)=n_\Lambda+1\geq 2$.
Then there exists a seperation and interpolation set $(\tilde{\Delta},m_{\tilde{\Delta}})$ such that $\sup_{\lambda\in\tilde{\Lambda}}m_{\tilde{\Lambda}}(\lambda)=n_\Lambda$ and 
$$ D^{\pm}(\Lambda,m_{\Lambda})=D^{\pm}(\tilde{\Lambda},m_{\tilde{\Lambda}}). $$
\end{theorem}
\begin{proof}
The proof follows similar steps to the original proof whereby we first define the following 	lattice for $\epsilon\in[0,\min \{\rho(\Lambda)/2,1/4 \}]$.
\[ \Lambda_{\max{}} = \left\{ \lambda\in\Lambda \;:\; m_{\Lambda}(\lambda) = \sum_{z\in\Lambda}(m_\Lambda(z))=n_{\Lambda}+1 \right\} .\]
For each $\lambda\in\Lambda_{\max}$ take $\lambda'\in\mathbb{C}$ such that $|\lambda-\lambda'|=\epsilon$, and define $\Lambda'=\{ \lambda'\;:\; \lambda\in\Lambda_{\max} \}$.
Define
\[
m_{\tilde{\Lambda}}=
\begin{cases}
	m_\Lambda(z) & z\in\Lambda,\; m_{\Lambda}(z)\leq n_\Lambda\\
	n_\Lambda(z) & z\in\Lambda,\; m_{\Lambda}(z) = n_\Lambda+1\\
	1 & z\in\Lambda'
\end{cases}
\]
Now we show that if $\tilde{a}\in \ell_\varphi^2$, then there exists $ f \in \cF_{\varphi}$ such that
\begin{enumerate}[(a)]
	\item $|| f ||_{\cF_{\varphi}}\leq C_{\Lambda}\epsilon^{-n_\Lambda}||\tilde a ||_{\ell_\varphi^2}$.
	\item $D_{\varphi}^{(j)}f(\lambda) = \tilde{a}_{(\lambda,k)},~~\text{for each } \lambda\in\Lambda,\; j\in [ 0,\min \{ n_\Lambda-1,m_\lambda(\lambda)-1 \} ].$
	\item $||f-\tilde{a}_{(\cdot,0)}||_{\ell_\varphi^2}\leq C_{\Lambda}\epsilon^{n_\Lambda+1}||f||_{\cF_{\varphi}}.$	
\end{enumerate}	
We start by building the necessary tools to prove (a) and (b). 
We recall the  
interpolation function $f_\lambda$ given by \eqref{eq:inter_f} with the coefficients $c_k$ given as the fractional derivatives by Lemma \ref{lemma:1.1}. In particular for $\lambda'$ such that $|\lambda'-\lambda|=\epsilon$ and $w\in \mathbb{C}$, we have
\begin{equation*}
		\begin{aligned}
			w= f_{\lambda} (z)&= \sum_{n=0}^{m_{\Lambda}-1} \left[ \frac{\varphi_{n}}{\varphi_{0}} c_{n}+ \sum_{k=n}^{m_\Lambda -1} \frac{\varphi_{k+1}}{\varphi_0}\frac{\varphi_{k}}{\varphi_0} 
			\lambda^{k-n}  
			\sum_{l=n}^{k} {k \choose l} (-1)^{k-l} \frac{\varphi_{l-n}}{\varphi_l} c_k 
			\right] 
			(z-\lambda)^n,
		\end{aligned}
\end{equation*}
denote $z=\lambda'$ and $n_\Lambda= m_\Lambda -1$, we obtain
\begin{align*}
	w
&= \sum_{n=0}^{n_{\Lambda}} 
 \frac{\varphi_{n}}{\varphi_{0}}D^{(n)}_\varphi(f_\lambda(\lambda))
(\lambda'-\lambda)^n
+ \sum_{n=0}^{n_{\Lambda}} \left[ \sum_{k=n}^{n_\Lambda } \frac{\varphi_{k+1}}{\varphi_0}\frac{\varphi_{k}}{\varphi_0} 
\lambda^{k-n}  
\sum_{l=n}^{k} {k \choose l} (-1)^{k-l} \frac{\varphi_{l-n}}{\varphi_l} D^{(k)}_\varphi(f_\lambda(\lambda))
\right] 
(\lambda'-\lambda)^n,
\end{align*}
which can be written equivalently as 
\begin{align*}
	w &=  
	\frac{\varphi_{n_\Lambda}}{\varphi_0}D^{(n_\Lambda)}_\varphi f_\lambda(\lambda) 	(\lambda'-\lambda)^{(n_\Lambda)} + 
	\sum_{n=0}^{n_{\Lambda}-1}	\frac{\varphi_{n}}{\varphi_{0}}D^{(n)}_\varphi(f_\lambda(\lambda))	(\lambda'-\lambda)^n\\
	&\quad + \sum_{n=0}^{n_{\Lambda}} \left[ \sum_{k=n}^{n_\Lambda } \frac{\varphi_{k+1}}{\varphi_0}\frac{\varphi_{k}}{\varphi_0} 
	\lambda^{k-n}  
	\sum_{l=n}^{k} {k \choose l} (-1)^{k-l} \frac{\varphi_{l-n}}{\varphi_l} D^{(k)}_\varphi(f_\lambda(\lambda))
	\right] 
	(\lambda'-\lambda)^n,
	\\
	&=\frac{\varphi_{n_\Lambda}+\varphi_{n_\Lambda+1}}{\varphi_0}D^{(n_\Lambda)}_\varphi f_\lambda(\lambda) 	(\lambda'-\lambda)^{(n_\Lambda)}\\
	&\quad  + 
	\sum_{n=0}^{n_{\Lambda}-1} \left[\frac{\varphi_{n}}{\varphi_{0}}D^{(n)}_\varphi(f_\lambda(\lambda))+ \sum_{k=n}^{n_\Lambda } \frac{\varphi_{k+1}}{\varphi_0}\frac{\varphi_{k}}{\varphi_0} 
	\lambda^{k-n}  
	\sum_{l=n}^{k} {k \choose l} (-1)^{k-l} \frac{\varphi_{l-n}}{\varphi_l} D^{(k)}_\varphi(f_\lambda(\lambda))
	\right] 
	(\lambda'-\lambda)^n\\ 
	&=\frac{\varphi_{n_\Lambda}+\varphi_{n_\Lambda+1}}{\varphi_0}D^{(n_\Lambda)}_\varphi f_\lambda(\lambda) 	(\lambda'-\lambda)^{(n_\Lambda)}+  D^{(n_\Lambda)}_\varphi(f_\lambda(\lambda))\left[ \sum_{n=0}^{n_{\Lambda}-1} \frac{\varphi_{n_\Lambda+1}}{\varphi_0}\frac{\varphi_{n_\Lambda}}{\varphi_0} 
	\lambda^{n_\Lambda-n}  
	\sum_{l=n}^{n_\Lambda} {n_\Lambda \choose l} (-1)^{n_\Lambda-l} \frac{\varphi_{l-n}}{\varphi_l} 
	(\lambda'-\lambda)^n\right] \\
	&\quad  + 
	\sum_{n=0}^{n_{\Lambda}-1} \left[\frac{\varphi_{n}}{\varphi_{0}}D^{(n)}_\varphi(f_\lambda(\lambda))+ \sum_{k=n}^{n_\Lambda-1 } \frac{\varphi_{k+1}}{\varphi_0}\frac{\varphi_{k}}{\varphi_0} 
	\lambda^{k-n}  
	\sum_{l=n}^{k} {k \choose l} (-1)^{k-l} \frac{\varphi_{l-n}}{\varphi_l} D^{(k)}_\varphi(f_\lambda(\lambda))
	\right] 
	(\lambda'-\lambda)^n 
\end{align*}
Hence we can isolate the highest power $D^{(n_\Lambda)}_\varphi(f_\lambda(\lambda))$

\begin{align*}
	D^{(n_\Lambda)}_\varphi(f_\lambda(\lambda)) = 
	\frac{w-\sum\limits_{n=0}^{n_{\Lambda}-1} \left[\frac{\varphi_{n}}{\varphi_{0}}D^{(n)}_\varphi(f_\lambda(\lambda))+ \sum\limits_{k=n}^{n_\Lambda-1 } \frac{\varphi_{k+1}}{\varphi_0}\frac{\varphi_{k}}{\varphi_0} 
		\lambda^{k-n}  
		\sum\limits_{l=n}^{k} {k \choose l} (-1)^{k-l} \frac{\varphi_{l-n}}{\varphi_l} D^{(k)}_\varphi(f_\lambda(\lambda))
		\right] 
		(\lambda'-\lambda)^n}{\frac{\varphi_{n_\Lambda}+\varphi_{n_\Lambda+1}}{\varphi_0}(\lambda'-\lambda)^{n_\Lambda} + \left[ \sum\limits_{n=0}^{n_{\Lambda}-1} \frac{\varphi_{n_\Lambda+1}}{\varphi_0}\frac{\varphi_{n_\Lambda}}{\varphi_0} 
		\lambda^{n_\Lambda-n}  
		\sum\limits_{l=n}^{n_\Lambda} {n_\Lambda \choose l} (-1)^{n_\Lambda-l} \frac{\varphi_{l-n}}{\varphi_l} 
		(\lambda'-\lambda)^n\right] }.
\end{align*}

For each $\lambda\in\Lambda$ we can rewrite the above line by defining $b_\lambda$ as the a combination of the highest power terms with $\tilde{a}_{(\lambda,j)}$ such that by Lemma \ref{lemma:1.1}. $D_{\varphi}^{(j)}f(\lambda) = \tilde{a}_{(\lambda,j)}$ (with $w=f(\lambda')=\tilde{a}_{(\lambda',0)}$ and $D_\varphi^{(n_\Lambda)} = b_\lambda = \tilde{a}_{(\lambda',n_\Lambda)} $). We define
\begin{align*}
	b_\lambda = 	\frac{\tilde{a}_{(\lambda',0)} -\sum\limits_{n=0}^{n_{\Lambda}-1} \left[ \frac{\varphi_{n}}{\varphi_{0}}D^{(n)}_\varphi(f_\lambda(\lambda))+ \sum\limits_{k=n}^{n_\Lambda-1 } \frac{\varphi_{k+1}}{\varphi_0}\frac{\varphi_{k}}{\varphi_0} 
		\lambda^{k-n}  
		\sum\limits_{l=n}^{k} {k \choose l} (-1)^{k-l} \frac{\varphi_{l-n}}{\varphi_l} 
		\tilde{a}_{(\lambda,k)} 
		\right] 
		(\lambda'-\lambda)^n}{\frac{\varphi_{n_\Lambda}+\varphi_{n_\Lambda+1}}{\varphi_0}(\lambda'-\lambda)^{n_\Lambda} + \left[ \sum\limits_{n=0}^{n_{\Lambda}-1} \frac{\varphi_{n_\Lambda+1}}{\varphi_0}\frac{\varphi_{n_\Lambda}}{\varphi_0} 
		\lambda^{n_\Lambda-n}  
		\sum\limits_{l=n}^{n_\Lambda} {n_\Lambda \choose l} (-1)^{n_\Lambda-l} \frac{\varphi_{l-n}}{\varphi_l} 
		(\lambda'-\lambda)^n\right] }
\end{align*}

This means that we have
\begin{align*}
	b_\lambda
	= \frac{1}{ \left(\frac{\varphi_{n_\Lambda}+\varphi_{n_\Lambda+1}}{\varphi_0} \right)(\lambda'-\lambda)^{n_\Lambda} }
	&\bigg( \tilde{a}_{(\lambda',0)} -  b_\lambda 
	\left[ \sum_{n=0}^{n_{\Lambda}-1} \frac{\varphi_{n_\Lambda+1}}{\varphi_0}\frac{\varphi_{n_\Lambda}}{\varphi_0} 
	\lambda^{n_\Lambda-n}  
	\sum_{l=n}^{n_\Lambda} {n_\Lambda \choose l} (-1)^{n_\Lambda-l} \frac{\varphi_{l-n}}{\varphi_l} 
	(\lambda'-\lambda)^n\right] 
	\\
	- \sum_{n=0}^{n_{\Lambda}-1} &\left[ \frac{\varphi_{n}}{\varphi_{0}}\tilde{a}_{(\lambda,n )} +\sum_{k=n}^{n_\Lambda-1 } \frac{\varphi_{k+1}}{\varphi_0}\frac{\varphi_{k}}{\varphi_0} 
	\lambda^{k-n}  
	\sum_{l=n}^{k} {k \choose l} (-1)^{k-l} \frac{\varphi_{l-n}}{\varphi_l} 
	\tilde{a}_{(\lambda,k)}
	\right] 
	(\lambda'-\lambda)^n\bigg)
\end{align*}

Hence we can obtain the inequality 
\begin{align*}
	|b_\lambda|^2
&\leq
	\underbrace{	2\left| \frac{1}{ \left(\frac{\varphi_{n_\Lambda}+\varphi_{n_\Lambda+1}}{\varphi_0} \right)(\lambda'-\lambda)^{n_\Lambda} }\right|^2}_{\leq Ce^{-2n_\Lambda}}
	\bigg(
	2 |\tilde{a}_{(\lambda',0)}|^2 
	+  |b_\lambda |^2
	\left| \sum_{n=0}^{n_{\Lambda}-1} \frac{\varphi_{n_\Lambda+1}}{\varphi_0}\frac{\varphi_{n_\Lambda}}{\varphi_0} 
	\lambda^{n_\Lambda-n}  
	\sum_{l=n}^{n_\Lambda} {n_\Lambda \choose l} (-1)^{n_\Lambda-l} \frac{\varphi_{l-n}}{\varphi_l} 
	(\lambda'-\lambda)^n\right|^2
	\\
	&\qquad	+ \left| \sum_{n=0}^{n_{\Lambda}-1} \left[ \frac{\varphi_{n}}{\varphi_{0}} \tilde{a}_{(\lambda,n )} +\sum_{k=n}^{n_\Lambda-1 } \frac{\varphi_{k+1}}{\varphi_0}\frac{\varphi_{k}}{\varphi_0} 
	\lambda^{k-n}  
	\sum_{l=n}^{k} {k \choose l} (-1)^{k-l} \frac{\varphi_{l-n}}{\varphi_l} 
	\tilde{a}_{(\lambda,k)}
	\right] 
	(\lambda'-\lambda)^n\right|^2
	\bigg).
\end{align*}
Furthermore, we can estimate 
\begin{align*}
		&\left| \sum_{n=0}^{n_{\Lambda}-1} \frac{\varphi_{n_\Lambda+1}}{\varphi_0}\frac{\varphi_{n_\Lambda}}{\varphi_0} 
	\lambda^{n_\Lambda-n}  
	\sum_{l=n}^{n_\Lambda} {n_\Lambda \choose l} (-1)^{n_\Lambda-l} \frac{\varphi_{l-n}}{\varphi_l} 
	(\lambda'-\lambda)^n\right|^2\\
	&\leq 
	 \sum_{n=0}^{n_{\Lambda}-1} 2^{n+1} \left| \frac{\varphi_{n_\Lambda+1}}{\varphi_0} \right|^2\left|\frac{\varphi_{n_\Lambda}}{\varphi_0} \right|^2
	\left|\lambda^{n_\Lambda-n}  \right|^2
	\left|
	\sum_{l=n}^{n_\Lambda} {n_\Lambda \choose l} (-1)^{n_\Lambda-l} \frac{\varphi_{l-n}}{\varphi_l} 
\right|^2 \epsilon^{-2n} = C'(\epsilon).
\end{align*}
So we have
\[ 
|b_\lambda|^2\leq \frac{2C\epsilon^{-2n_\Lambda}}{1-C'(\epsilon)}\left( 2 |\tilde{a}_{(\lambda',0)}|^2 + 
 \left| \sum_{n=0}^{n_{\Lambda}-1} \left(\frac{\varphi_{n}}{\varphi_{0}}\tilde{a}_{(\lambda,n )}+ \sum_{k=n}^{n_\Lambda-1 } \frac{\varphi_{k+1}}{\varphi_0}\frac{\varphi_{k}}{\varphi_0} 
\lambda^{k-n}  
\sum_{l=n}^{k} {k \choose l} (-1)^{k-l} \frac{\varphi_{l-n}}{\varphi_l} 
\tilde{a}_{(\lambda,k)}
\right) 
\right|^2 \epsilon^{2n}
  \right),
\]
which means
\[
|b_\lambda|^2\leq
C\epsilon^{-2n}
\left(  |\tilde{a}_{(\lambda',0)}|^2+ \sum_{j=0}^{n_\Lambda-1}|\tilde{a}_{(\lambda,j)}|^2 \right) .
\]

Let such such sequence of $\tilde{a}\in\ell_\varphi^2$, we want to find $a\in\cF_{\varphi}$ such that
\[ ||a||^2_{\ell_\varphi^2} \leq C \epsilon^{-2n_\Lambda}||\tilde{a}||^2_{\ell_\varphi^2} <\infty.\]
Using this $b_\lambda$, we define the new sequence $a=\{a_{\lambda,j}\}_{\lambda\in\Lambda}$, by 
\[ a_{(\lambda,j)} = 
\begin{cases}
	b_\lambda& \lambda\in\Lambda\quad\&\quad j=n_{\lambda}\\
	\tilde{a}_{(\lambda,j)}& \text{else}.
\end{cases} 
\]
So we have 
\begin{equation}\label{eq:56}
	||a||^{2}_{\ell_\varphi^2} \leq C \epsilon^{-2n_{\Lambda}} ||\tilde{a}||^{2}_{\ell_\varphi^2}<\infty.
\end{equation} 
Since $(\Lambda,m_{\Lambda})$ is an interpolation set, we have the following properties
\begin{enumerate}[(i)]
	\item $||f||_{\cF_{\varphi}}\leq C_{\Lambda}||a||_{\ell_\varphi^2} .$
	\item $D_{\varphi}^{(j)}f(\lambda) = \tilde{a}_{(\lambda,k)},~~\text{for each } \lambda\in\Lambda,\; j\in [ 0,\min \{ n_\Lambda-1,m_\lambda(\lambda)-1\} ].$
	\item $D_{\varphi}^{(n_{\Lambda})}f(\lambda) =  b_\lambda$ if $\lambda\in\Lambda_{\max}$.
\end{enumerate}
From \eqref{eq:56} and (i),(ii), we get (a),(b).
Part (c) follows from regular Taylor series argument.

Namely
\[ |f(\lambda') - \tilde{a}_{(\lambda',0)} |^2 \leq  C\epsilon^{2(n_\Lambda+1)} ||f||^2_{L^2_\varphi(B(\lambda',1))}\]
which implies 
\[ 
||f-\tilde{a}_{(\lambda',0)}||_{\ell^2_\varphi} \leq C e^{n_\Lambda+1}||f||_{\cF_{\varphi}}.
\]

Showing that $(\tilde{\Lambda},m_{\tilde{\Lambda}})$ is an interpolation set follows the same steps as in \cite{Escudero}.
Finally, $D^{\pm}(\Lambda,m_{\Lambda})=D^{\pm}(\tilde{\Lambda},m_{\tilde{\Lambda}})$ is again by the same arguments as in \cite{Escudero}.

\end{proof}

\begin{theorem}
	\label{th:4.5}
	Let $\lambda,\lambda'\in \mathbb{C}$ and $\epsilon\in(0,0.25)$. If $f\in\cF_{\varphi}$ and $|\lambda-\lambda'|=\epsilon$, there exists a constant $C_\epsilon$ such that 
	\[ |D_\varphi^{(n_\Lambda)}f(\lambda)|^2\leq C_\epsilon\left[ |f(\lambda')|^2 +\sum_{j=0}^{n_\Lambda-1}|D_\varphi^{(j)}f(\lambda)| \right]  + \epsilon||f||^2_{L^2_\varphi(B(\lambda,1)) }.   \]
\end{theorem}

\begin{proof}
	Using the fractional Taylor series around $\lambda$, i.e.
		\begin{equation}\label{eq:inter_f4}
		f (z)= \sum_{n=0}^{\infty} \left[ \frac{\varphi_{n}}{\varphi_{0}} D^{(n)}_\varphi(f(\lambda)) + \sum_{k=n}^{\infty} \frac{\varphi_{k+1}}{\varphi_0}\frac{\varphi_{k}}{\varphi_0} 
		\lambda^{k-n}  
		\sum_{l=n}^{k} {k \choose l} (-1)^{k-l} \frac{\varphi_{l-n}}{\varphi_l} D^{(k)}_\varphi(f(\lambda))
		\right] 
		(z-\lambda)^n.
	\end{equation}

and using similar estimates as in the proof of the previous theorem, the result follows immediately.	
\end{proof}

\begin{theorem}
Let $(\Gamma,m_\Gamma)$ be a separated multiplicative set, such that it is a sampling set  for $\cF_{\varphi}$ and $\sup_{\lambda\in\Lambda} m_\Lambda(\lambda)=n_\lambda+1\geq 2$.
Then there exists another separated multiplicative sampling set $(\tilde{\Lambda},m_{\tilde{\Lambda}})$ such that $\sup_{\lambda\in\tilde{\Lambda}}m_{\tilde{\Lambda}}(\lambda) = n_\Lambda$ and 
\[ D^{\pm}(\Lambda,m_\Lambda) = D^{\pm}(\tilde{\Lambda},m_{\tilde{\Lambda}}).  \]
\end{theorem}

\begin{proof}
	Taking $(\Lambda, m_\Lambda)$ to be a sampling set, it follows 
	\[ ||f||_{\cF_{\varphi}} \leq C_\Lambda \sum_{\lambda\in \Lambda} \sum_{j=0}^{m_\Lambda-1} |D_\varphi^{(j)} f(\lambda)|^2 .\]
	We can get a lower bound for $\lambda \in \Lambda_{\rm{max}}:$ $m_{\tilde\lambda}(\lambda) = n_\lambda (\lambda)-1$. It follows from previous parts
	\[ \sum_{j=0}^{m_\Lambda(\lambda)-1 } |D^{(j)}_\varphi f(\lambda)|^2\leq C_{\varphi,n_\Lambda,\epsilon} \left( |f(\lambda')|+\sum_{j=0}^{m_\Lambda-1} |D_{\varphi}^{(j)} f(\lambda)|^2  \right) +\epsilon||f||_{\ell^2_\varphi(B(0,1))} .\]
	Then the rest of the proof for the lower bound, and upper bound would follow same steps as in \cite{Escudero}.
\end{proof}

\section{ A Fractional Analogue of the Weierstrass-$\sigma$ function}\label{sec:weie}

We propose the following analogue for the Weierstrass-$\sigma$ function endowed with the Gelfond-Leontiev operator of generalized differentiation. 

Let $\Lambda = \{ \lambda_{m,n} : \lambda_{m,n} = \lambda (m+in), m, n \in \BZ, \lambda >0 \}$ be a square lattice. Let us consider a lattice $\Gamma=\{z_{m,n}, m, n \in \BZ \}$ which is uniformly closed to $\Lambda$, i.e. for which there exist constants $Q$ and $q(\Gamma)$ such that
$$|z_{m,n}-\lambda_{m,n}|<Q,$$
and
$$q(\Gamma) = \inf\limits_{(m,n)\neq (k,l)}|z_{m,n}-z_{k,l}|>0.$$

In the classic case the Weierstrass-$\sigma$ function associated to the original square lattice $\Lambda$ is given by
\begin{equation*}
	\label{eq:sig}
	\sigma(z; \Lambda) := z \prod_{(m,n) \not= (0,0)}\left(1-\frac{z}{\lambda_{m,n}} \right) e^{\left( \frac{z}{\lambda_{m,n}}+\frac{1}{2}\frac{z^2}{\lambda^2_{m,n}} \right)} = z \prod_{(m,n) \not= (0,0)}E_{2, \sigma}(z; m, n),
\end{equation*} where
$$E_{2, \sigma}(z; m, n) =   \left(1-\frac{z}{\lambda_{m,n}} \right) e^{\left( \frac{z}{\lambda_{m,n}}+\frac{1}{2}\frac{z^2}{\lambda^2_{m,n}} \right)}$$

In this work, we wish to construct a function which generalizes the Weierstrass-$\sigma$ function.

In order to obtain such a function we assume $\varphi_0=1.$ Given
\begin{equation}
	\label{eq:norm}
	\varphi (z) = 1+\sum_{k=1}^{\infty}\varphi_k z^k,
\end{equation}
we define the auxiliar entire function  
$ \psi(z) = \sum_{n=1}^{\infty}\psi_nz^n,$ 
such that the corresponding fractional Weierstrass-$\sigma$ factor $E(z)$ satisfies
\begin{equation}
	\label{eq:analog}
	E(z)=(1-z)\varphi(z\psi_1+z^2\psi_2).
\end{equation}

Developing we get
\begin{eqnarray}
	E(z) & = & (1-z) \sum_{n=0}^\infty \varphi_n z^n (\psi_1+z\psi_2)^n \nonumber \\
	& = & (1-z) \sum_{n=0}^\infty \sum_{k=0}^n {n \choose k} \varphi_n   \psi_1^{n-k} \psi_2^k  z^{n+k} \nonumber \\
	& = &  \sum_{n=0}^\infty \sum_{k=0}^n {n \choose k} \varphi_n   \psi_1^{n-k} \psi_2^k  z^{n+k} - \sum_{n=0}^\infty \sum_{k=0}^n {n \choose k} \varphi_n   \psi_1^{n-k} \psi_2^k  z^{n+k+1} \nonumber \\
	& = &  \sum_{m=0}^\infty \sum_{k=0}^{\lfloor m/2 \rfloor} {m-k \choose k} \varphi_{m-k}   \psi_1^{m-2k} \psi_2^k  z^{m} - \sum_{m=1}^\infty \sum_{k=0}^{\lfloor (m-1)/2 \rfloor} {m-1-k \choose k} \varphi_{m-1-k}   \psi_1^{m-1-2k} \psi_2^k  z^{m} \nonumber \\
	& = &   \varphi_{0}  + \sum_{m=1}^\infty \left[ \sum_{k=0}^{\lfloor m/2 \rfloor} {m-k \choose k} \varphi_{m-k}   \psi_1^{m-2k} \psi_2^k - \sum_{k=0}^{\lfloor (m-1)/2 \rfloor} {m-1-k \choose k} \varphi_{m-1-k}   \psi_1^{m-1-2k} \psi_2^k  \right] z^{m} .
\end{eqnarray}
Hence, we obtain (recall $\varphi_0=1$)
\begin{eqnarray}
	E(z) & = & 1 + (\varphi_{1}   \psi_1 - 1) z + (\varphi_{2}   \psi_1^2 + \varphi_{1}   \psi_2 - \varphi_1 \psi_1 ) z^2  \nonumber \\
	&  & \hspace{1cm} + \underbrace{(\varphi_{3}   \psi_1^3 + 2\varphi_{2} \psi_1  \psi_2 - \varphi_2 \psi_1^2 - \varphi_1 \psi_2 ) z^3 + \cdots	}_{=: \Omega(z) z^3} \label{eq:analog_explicity}
\end{eqnarray}
We further impose the coefficients of $z$ and $z^2$ to be zero, that is
\begin{equation}\label{Psis}
	\psi_1 = \frac{1}{\varphi_1}, \quad    \psi_2 =  \frac{\varphi_1^2 - \varphi_{2}}{\varphi_1^3},
\end{equation}
so that we get
$| 1- E(z) | = | \Omega(z)| |z|^3,$ where $\Omega(z)$ is the reminder. In the unit disk $|z|<1$ we get the inequality:
\begin{equation}
	\label{eq:E_inequality}
	|1-E(z)| \leq |\Omega(z)|,
\end{equation}
as desired.

We obtain a $\sigma$-function of the form
$$\sigma(z; \Lambda) := z \prod_{(m,n) \not= (0,0)}\left(1-\frac{z}{\lambda_{m,n}} \right) \varphi \left(\psi_1\frac{z}{\lambda_{m,n}}+\psi_2\frac{z^2}{\lambda^2_{m,n}} \right),$$
and to which we associated the auxiliar function $g$ given by
\begin{equation}
	\label{eq:g}
	g(z; \Gamma) := (z-z_{00}) \prod_{(m,n) \not= (0,0)}\left(1-\frac{z}{z_{m,n}} \right) \varphi{\left( \psi_1 \frac{z}{z_{m,n}}+\psi_2\frac{z^2}{\lambda^2_{m,n}} \right)},
\end{equation}
with factors
$$E_\Lambda(z;m,n) =  \left(1-\frac{z}{\lambda_{m,n}} \right) \varphi{\left( \psi_1 \frac{z}{\lambda_{m,n}}+\psi_2 \frac{z^2}{\lambda^2_{m,n}} \right)}, \qquad E_\Gamma(z;m,n) =  \left(1-\frac{z}{z_{m,n}} \right) \varphi{\left(  \psi_1 \frac{z}{z_{m,n}}+ \psi_2  \frac{z^2}{\lambda^2_{m,n}} \right)}.$$

When clear from the context, we will abbreviate to $E(z):= E_{\Gamma / \Lambda}(z; m,n)$.

We can further develop $\Omega = \Omega(z)$ as
\begin{eqnarray*}
	\Omega(z) & = & \sum_{m=3}^\infty \left[ \sum_{k=0}^{\lfloor m/2 \rfloor} {m-k \choose k} \varphi_{m-k}   \psi_1^{m-2k} \psi_2^k - \sum_{k=0}^{\lfloor (m-1)/2 \rfloor} {m-1-k \choose k} \varphi_{m-1-k}   \psi_1^{m-1-2k} \psi_2^k  \right] z^{m-3} \nonumber \\
	& = & \sum_{m=0}^\infty \left[ \sum_{k=0}^{\lfloor (m+3)/2 \rfloor} {m+3-k \choose k} \varphi_{m+3-k}   \psi_1^{m+3-2k} \psi_2^k - \sum_{k=0}^{\lfloor (m+2)/2 \rfloor} {m+2-k \choose k} \varphi_{m+2-k}   \psi_1^{m+2-2k} \psi_2^k  \right] z^{m} \nonumber \\
	& = & \sum_{n=0}^\infty  \sum_{k=0}^{n+1} \left[ \frac{2n+3-k}{2n+3-2k}  \varphi_{2n+3-k}   \psi_1 -   \varphi_{2n+2-k}  \right] {2n+2-k \choose k}  \psi_1^{2n+2-2k} \psi_2^k  z^{2n} \nonumber \\
	&  &+ \sum_{n=0}^\infty \left[ \sum_{k=0}^{n+1} \left( \frac{2n+4-k}{2n+4-2k}  \varphi_{2n+4-k}   \psi_1 -   \varphi_{2n+3-k}  \right) {2n+3-k \choose k}  \psi_1^{2n+3-2k} \psi_2^k  + \varphi_{n+2}   \psi_2^{n+2} \right] z^{2n+1} \nonumber \\
	& = & \sum_{n=0}^\infty  \sum_{k=0}^{n+1} \left[ \frac{2n+3-k}{2n+3-2k}  \frac{\varphi_{2n+3-k}}{ \varphi_1} -   \varphi_{2n+2-k}  \right] {2n+2-k \choose k}  \frac{(\varphi_1^2 -\varphi_2)^k}{\varphi_1^{2n+2+k}}  z^{2n} \nonumber \\
	&  &+ \sum_{n=0}^\infty \left[ \sum_{k=0}^{n+1} \left( \frac{2n+4-k}{2n+4-2k}  \frac{\varphi_{2n+4-k}}{ \varphi_1} -   \varphi_{2n+3-k}  \right) {2n+3-k \choose k}  \frac{(\varphi_1^2 -\varphi_2)^k}{\varphi_1^{2n+3+k}}   + \varphi_{n+2}  \frac{(\varphi_1^2 -\varphi_2)^{n+2} }{\varphi_1^{3n+6}}    \right] z^{2n+1}. \label{Eq:29}
\end{eqnarray*}

For $|z| \leq 1$ we have for $\Omega = \Omega(z)$ an estimate in terms of the coefficients of $\varphi_n, \psi_1,$ and $\psi_2.$
\begin{gather*}
	E(z) = \varphi(z\psi_1+z^2\psi_2) - z \varphi(z\psi_1+z^2\psi_2) = \sum_{n=0}^\infty \varphi_n  z^n(1-z) (\psi_1 -\psi_2 z)^n \\
	\Rightarrow ~ E(z)  -1 = - z + \varphi_1 z(1-z) (\psi_1 -\psi_2 z) + \varphi_2 z^2(1-z) (\psi_1 -\psi_2 z)^2 + \sum_{n=3}^\infty \varphi_n  z^n(1-z) (\psi_1 -\psi_2 z)^n \\
	= (- \varphi_1 \psi_2 + 2 \varphi_2 \psi_1 \psi_2 -\varphi_2 \psi_1^2)z^3 + (\varphi_2 \psi_1 \psi^2_2 -2\varphi_2 \psi_1 \psi_2)z^4  - \varphi_2 \psi_2^2 z^5 + \sum_{n=3}^\infty \varphi_n  z^n(1-z) (\psi_1 -\psi_2 z)^n,
\end{gather*}
and hence, for $|z| \leq 1$ we obtain
\begin{equation}
	\label{eq:Omega}
	\begin{aligned}
		|\Omega(z)|&\leq \left| \varphi_1 \psi_2 - 2 \varphi_2 \psi_1 \psi_2 +\varphi_2 \psi_1^2 |+ |\varphi_2 \psi_1 \psi^2_2 - 2\varphi_2 \psi_1 \psi_2 | +|\varphi_2 \psi_2^2\right|  +2\sum_{n=3}^{\infty}|\varphi_n| \big(|\psi_1|+|\psi_2| \big)^n.
	\end{aligned}
\end{equation}

With the restrictions on $\varphi_1$ and $\varphi_2$ such that $|\psi_1|+|\psi_2|<R,$ where $R$ is the radius of convergence of $\sum\varphi_nz^n,$ 
we obtain the 
lower bound for $R$ given by:
\begin{equation}
	\label{eq:rl}
	R_L=|\psi_1|+|\psi_2|=\frac{2\varphi^2_1-\varphi_2}{\varphi_1^3},
\end{equation}
while the upper bound for $R$ is given by  
\begin{equation}
	\label{eq:ru}
	R_U = \frac{1}{\limsup\limits_{n\to \infty}|\varphi_n|^{\frac{1}{n}}}.
\end{equation}

\begin{example}
	\label{ex:1}
	For $\varphi (z)= e^z = \sum_{n=0}^{\infty}\frac{z^n}{n!},$ that is, $\varphi_n  = \frac{1}{\Gamma(n+1)} = \frac{1}{n!}$, we obtain $\psi_1 = 1$ and $\psi_2 = \frac{1^2-1/2}{1^3}= \frac{1}{2}.$ Hence, for the Weierstrass factor we have
	\[ E(z) = (1-z) e^{z + \frac{z^2}{2}},  \]
	with lower and upper bounds given by  \eqref{eq:rl} and \eqref{eq:ru} as	$R_L= \frac{3}{2}$ and $R_U= \limsup\limits_{n\to\infty}\frac{1}{\left(\frac{1}{n!}\right)^{\frac{1}{n}}}\to\infty$.
\end{example}

\begin{example}
	\label{ex:shift_limits}
	For $\varphi(z)= \frac{1}{1-z} =\sum_{n=0}^{\infty} z^n$ (not an entire function), we have $\psi_1 = 1$, and $\psi_2=0$, which leads to $$E(z) = (1-z) \varphi(z \psi_1 + z^2 \psi_2) = 1$$ and, thus, $\Omega(z) =0.$   Also, our bounds for $R$ are $R_L = (2-1)/1=1$ and $R_U=
	\limsup\limits_{n\to\infty}\frac{1}{1^{\frac{1}{n}}}=1$. This example is also an example of an extreme case where one has $R_U=R_L=R=1$.
\end{example}

\begin{example}
	Consider the Mittag-Leffler function defined in Example \ref{exML}
	$$\varphi (z)=E_{\frac{1}{\rho},\mu}(z) = \sum_{k=0}^{\infty}\frac{z^k}{\Gamma\left(\mu+\frac{k}{\rho}\right)}$$
	for $\rho>0,{\rm Re}(\mu)>0.$
	
	Then, and taking into account the normalization factor in (\ref {eq:norm}),  we have $\psi_1=\frac{\Gamma\left(\mu+\frac{1}{\rho}\right)}{\Gamma\left(\mu \right)}$ and
	$\psi_2= \frac{\Gamma\left(\mu+\frac{1}{\rho}\right)}{\Gamma\left(\mu \right)} -  \frac{ \Gamma^{3}\left(\mu + \frac{1}{\rho} \right)}{\Gamma \left(\mu+\frac{2}{\rho}\right) \Gamma^2(\mu) }.$ Replacing these values in (\ref{eq:analog}) 
	we obtain
	$$E(z)=1+z^3\Omega(z) = \Gamma(\mu) (1-z) E_{\frac{1}{\rho},\mu} \left( \frac{\Gamma\left(\mu+\frac{1}{\rho}\right)}{\Gamma\left(\mu \right)} z +  \frac{ \Gamma \left(\mu + \frac{1}{\rho}\right)\Gamma \left(\mu + \frac{2}{\rho}\right) \Gamma \left(\mu \right) -\Gamma^{3}\left(\mu + \frac{1}{\rho} \right)}{\Gamma \left(\mu+\frac{2}{\rho}\right) \Gamma^2(\mu) } z^2  \right).$$ Moreover, the lower bound is then given by	
	\[ R_L = 2\frac{\Gamma\left(\mu+\frac{1}{\rho}\right)}{\Gamma\left(\mu \right)} -  \frac{ \Gamma^{3}\left(\mu + \frac{1}{\rho} \right)}{\Gamma \left(\mu+\frac{2}{\rho}\right) \Gamma^2(\mu) }.\]
	
	For the upper bound, and using Stirling's approximation $\Gamma(x+1)\sim\sqrt{2\pi x}\left(\frac{x}{e}\right)^x$, we have for the normalized coefficients of $\varphi$
	\begin{gather*}
		|\Gamma(\mu) \varphi_n|^{\frac{1}{n}}  = \left| \frac{\Gamma(\mu)}{\Gamma\left(\mu +\frac{n}{\rho} \right)} \right|^{\frac{1}{n}} = \left| \Gamma(\mu)  \right|^{\frac{1}{n}} \left|{\Gamma\left(\mu +\frac{n}{\rho} \right)} \right|^{-\frac{1}{n}}.
	\end{gather*}
	As $|\Gamma(\mu)| >0$ we only analyse the behaviour of the last term.
	\begin{align*}
		\left| \Gamma\left(\mu +\frac{n}{\rho} \right) \right|^{\frac{1}{n}}
		&\sim\left|\sqrt{2\pi\left(\mu+\frac{n}{\rho}-1\right)}\left(\frac{\mu+\frac{n}{\rho}-1}{e}\right)^{\left({\mu+\frac{n}{\rho}-1}\right)} \right|^{\frac{1}{n}}\\
		&=\left|\sqrt{2\pi\left(\mu+\frac{n}{\rho}-1\right)} \right|^{\frac{1}{n}}
		\left|\left(\frac{\mu+\frac{n}{\rho}-1}{e}\right)^{\left(\frac{\mu}{n}-\frac{1}{n}+\frac{1}{\rho}\right)} \right|~~
		\xrightarrow[]{n\to\infty}~~\infty.
	\end{align*}
	Therefore, the upper bound is given by $R_U = \frac{1}{\limsup\limits_{n\to \infty}|\varphi_n|^{\frac{1}{n}}}= \infty$.
\end{example}

\begin{example}\label{ex:eab}
	Let $K_\varphi(x) = e^{ -a(-x)^b}$ for $x>0$, where $b >0$, ${\rm Re}(a)>0.$ Then, we have $\varphi(z)=\sum_{n=0}^\infty \varphi_n z^n,$ where
	$$\varphi_n = \frac{b a^{\frac{n+1}{b}} }{\Gamma\left( \frac{n+1}{b} \right)}, \qquad n =0, 1, 2, \ldots$$
	We have (again taking into account the normalization factor)
	\[
	\tilde \varphi_1=\frac{a^{\frac{1}{b}} \Gamma\left( \frac{1}{b} \right)}{\Gamma\left( \frac{2}{b} \right)},\quad \tilde \varphi_2=\frac{a^{\frac{2}{b}} \Gamma\left( \frac{1}{b} \right)}{\Gamma\left( \frac{3}{b} \right)}.
	\]
	This gives
	$$E(z)= \frac{ \Gamma \left( \frac{1}{b} \right) }{b a^{\frac{1}{b} }}(1-z) { \rm exp} \left[-a \left(- \frac{\Gamma\left( \frac{2}{b} \right)}{a^{\frac{1}{b}} \Gamma\left( \frac{1}{b} \right)} z -    \frac{ 2 \Gamma\left( \frac{1}{b} \right) \Gamma\left( \frac{2}{b} \right) \Gamma\left( \frac{3}{b} \right) - \Gamma^3\left( \frac{2}{b} \right)}{ a^{\frac{1}{b}}  \Gamma^2\left( \frac{1}{b} \right) \Gamma\left( \frac{3}{b} \right)}   z^2 \right)^b \right],$$
	with the following lower bound
	\[ R_L =
	2 \frac{\Gamma\left( \frac{2}{b} \right)}{a^{ \frac{1}{b}} \Gamma\left( \frac{1}{b} \right)}   - \frac{\Gamma^3\left( \frac{2}{b} \right)}{ a^{\frac{1}{b}} \Gamma\left( \frac{3}{b} \right) \Gamma^2\left( \frac{1}{b} \right) }.
	\]
	
	Using Stirling's approximation $\Gamma(x+1)\sim\sqrt{2\pi x}\left(\frac{x}{e}\right)^x$, we have
	\begin{gather*}
		|\varphi_n|^{\frac{1}{n}} = \left| \frac{b a^{\frac{n+1}{b}}}{\Gamma \left( \frac{n+1}{b} \right)} \right|^{\frac{1}{n}} = \left| b a^{\frac{n+1}{b}} \right|^{\frac{1}{n}}  \left|  \Gamma \left( \frac{n+1}{b} \right) \right|^{-\frac{1}{n}}
	\end{gather*} and again
	\begin{align*}
		\left|  \Gamma \left( \frac{n+1}{b} \right) \right|^{\frac{1}{n}} &\sim  \left( \sqrt{2\pi \left(\frac{n+1}{b}-1\right)}\left( \frac{\frac{n+1}{b}-1}{e} \right)^{\frac{n+1}{b}-1} \right)^{\frac{1}{n}}
		\xrightarrow[]{n\to\infty}~~\infty,
	\end{align*}
	so that we obtain $R_U=\infty$.
\end{example}

\begin{example}
	Consider $\varphi(z) = \sum_{k=0}^\infty  	\frac{z^k}{\Gamma^{(n)}(k+1)}$, for some fixed $n \in \mathbb{N}$, associated to the measure $K_\varphi(x)= e^{-x}\ln^n |x|, x>0$. Hence, we obtain for the normalized coefficients
	$$\tilde \varphi_1 = \frac{\Gamma^{(n)}(2)}{\Gamma^{(n)}(1)}, \qquad \tilde \varphi_2 = \frac{\Gamma^{(n)}(3)}{\Gamma^{(n)}(1)},$$ and
	$$E(z) =(1-z) \, \Gamma^{(n)}(1)  \varphi \left( \frac{\Gamma^{(n)}(1)}{\Gamma^{(n)}(2)} z +  \frac{ \Gamma^{(n)}(1)\Gamma^{(n)}(2)  - \Gamma^{(n)}(3) (\Gamma^{(n)}(1) )^2}{(\Gamma^{(n)}(2) )^3 } z^2  \right).$$
	Hence, we obtain the lower bound for $|\Omega(z)|$ as 	
	\[ R_L = 2\frac{\Gamma^{(n)}(1)}{\Gamma^{(n)}(2)} - \frac{\Gamma^{(n)}(3) (\Gamma^{(n)}(1) )^2}{(\Gamma^{(n)}(2) )^3 } \]

	In order to find the upper limit we need to consider the Digamma function $\phi$ like in Example~\ref{ex:bt2}
	\[ R_U = \frac{1}{\lim\sup\limits_{k\to \infty} |\Gamma^{(n)}(k+1)|^{-\frac{1}{k}}} \]
	
	For $n=1$ we have
	\begin{align*}
		|\Gamma'(k+1)|^{\frac{1}{k}}&=| k!(-\gamma+H_k)|^{\frac{1}{k}} = (k! )^{\frac{1}{k}}|(-\gamma+H_k)|^{\frac{1}{k}}\to \infty
	\end{align*}
	Since the dominating term satisfies
	$\displaystyle \lim\limits_{k\to\infty}(k!)^{\frac{1}{k}}\to \infty,$ we get $R_U=\infty$ when $n=1$.
	

	For a general $n>1$, using the fact that $\Gamma^{(n)}(x)=\int_{0}^{\infty}t^{x-1}e^{-t}\ln^n(t) dt$ we have
	\begin{align*}
		R_U = \frac{1}{\lim\sup\limits_{k\to \infty} |\Gamma^{(n)}(k)|^{-\frac{1}{k}}}
		&=\frac{1}{\lim\sup\limits_{k\to \infty} \left| \int_{0}^{\infty} t^{k-1} e^{-t}\ln^n(t) dt \right|^{-\frac{1}{k}}}\\
		&\geq \frac{1}{\lim\sup\limits_{k\to \infty} \left(\int_{0}^{\infty} |t^{\frac{k-1}{k}}| |e^{-t}\ln^n(t)|^{\frac{1}{k}} dt \right)^{-1}}
	\end{align*}		
\end{example}
			\begin{align*}
		&= \frac{1}{ \left(\int_{0}^{\infty}\lim\sup\limits_{k\to \infty} |t^{\frac{k-1}{k}}| |e^{-t}\ln^n(t) |^{\frac{1}{k}}dt \right)^{-1} }\\
		&\to \frac{1}{\left(\int_{0}^{\infty} tdt \right)^{-1}}\to \infty.
	\end{align*}
	\textit{Therefore $R_U=\infty$, for all $n\geq1$.}

\begin{remark}
	For every function $\varphi$ we can, with some appropriate restrictions, associate a counterpart of the Weierstrass factors $\prod_{m,n} (1-\frac{z}{z_{mn}})\varphi\left(\psi_1\frac{z}{z_{mn}} +
	\psi_2\frac{z^2}{\lambda^2_{mn}}\right)$.
	This allows us to find classes of entire functions with preassigned zeros associated to $\varphi$.
\end{remark}

\section{Proof of main theorem for the quasi-periodic case}

While the proofs for Theorems \ref{th:sam_and_inter_1}, \ref{th:sam_and_inter_2}, and \ref{th:sam_and_inter_3} follow similar steps as in \cite{K92} and \cite{K292}, they require the new analogue of the Weierstrass-$\sigma$ function endowed with the Gelfond-Leontiev operator of generalized differentiation  introduces in Section \ref{sec:weie}.
We obtain the following estimate for the function $g$.

\begin{lemma}
{For the function $g=g(\cdot;\Gamma)$ defined on the region $R = \{ z, |z|<\frac{1}{2}\beta_\varphi \}$}
such that $|K_\varphi(-|z|^2)g(z)|$ is quasi-periodic,
we have the inequality
\[ \left|K_\varphi \left( -|z|^2 \right) g(z) \right| \geq c {\rm d}(z,\Lambda) \]
with a positive constant $c$.
\end{lemma}

\begin{proof}
From the restriction that  $\left| K_\varphi \left(-  |z|^2 \right) g(z) \right|$ is quasi-periodic, it follows that for any $z$ we have some  $\omega\in \bar{R}$ such that
$$\left| K_\varphi \left(-  |z|^2 \right) g(z) \right|=\left| K_\varphi \left(-  |\omega|^2 \right) g(\omega) \right|.$$
	
{First we note that on the region $R$, $g(z)$ has no zeros of the form $z_{mn}$ for $n\neq m$  other than $z_{00}$. Then,}
using $|1-E(z)|\leq |z|^3 |\Omega (z)|$ from \eqref{eq:E_inequality} and the triangle inequality we have
\begin{align*}
	|1|-|E(z)| \leq |1-E(z)| \leq | z^3\Omega(z) |	 &\implies -|E(z)| \leq | z^3\Omega(z) | - 1\\
	&\implies |E(z)|\geq 1-|z^3\Omega(z)|.
\end{align*}
Then for a constant $c >0$ we have
\begin{align*}
	\left| K_\varphi \left(-  |z|^2 \right) g(z) \right| &= \left| K_\varphi \left( - |z|^2  \right) (z-z_{00}) \prod_{m,n} E_2(z) \right|\\
	&= \left| K_\varphi \left( - |z|^2  \right)\right|\left| (z-z_{00})\right|\prod_{m,n} \left|E_2(z) \right|\\
	&\geq \left| K_\varphi \left( - |z|^2  \right)\right| \left|\prod_{m,n} ( 1- |z^3\Omega(z)| )\right|\left| (z-z_{00})\right|\\
	& = c {\rm d}(z,\Lambda).
\end{align*}
\end{proof}

In addition we can also give a bound for $g(z)$ which
by \eqref{eq:Omega} is independent of $z$.
Therefore, from \eqref{eq:E_inequality} and \eqref{eq:Omega} in \eqref{eq:g} combined with the triangle inequality we obtain the following independent bound of $g(z)$:
\begin{equation}
	\label{eq:g_bound}
	\begin{aligned}
		|g(z)| & = |(z-z_{00})||\prod_{m,n} E_2(z)|,\\
		& \leq |(z-z_{00})|\prod_{m,n}|(1+z^3\Omega(z))| \leq  |z-z_{00}|  |\Omega(z)^N z^{3N} +\beta_1(z)|= |\Omega(z)^N z^{3N+1} +\beta_2(z)|,
	\end{aligned}
\end{equation}
where $N$ is a constant equal to the highest degree when performing the product $\prod\limits_{m,n} (z^3\Omega(z) +1)$. As we only consider the leading term when looking for the upper bound, we denote by $\beta_1(z)$ and $\beta_2(z)$ the
expressions depending of $z$ which contain  all terms with lower degrees than the leading term.

We also need a lower estimate for our generalized $\sigma-$function as given in the next lemma. This lemma is an immediate consequence of the above lemma.
\begin{lemma}\label{lemma:sigma}
For the generalized Weierstrass $\sigma$-function such that $|K_\varphi(-|z|^2)\sigma(z)|$ is quasi-periodic we have the estimate
$$
K_\varphi(-|z|^2)|\sigma(z)|\geq C d(z,\Lambda),
$$
with a constant $C>0$.
\end{lemma}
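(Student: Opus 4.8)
The plan is to follow the strategy of the lemma just established for $g$, exploiting the fact that $\sigma(\cdot\,;\Lambda)$ is the special instance of $g(\cdot\,;\Gamma)$ obtained by taking $\Gamma=\Lambda$: setting $z_{00}=0$ and $z_{m,n}=\lambda_{m,n}$ in \eqref{eq:g} reproduces the $\sigma$-function verbatim, since the term $\psi_2 z^2/\lambda^2_{m,n}$ already refers to $\Lambda$ in both definitions. In principle the estimate for $g$ applies directly; nonetheless, since the zeros of $\sigma$ lie exactly on $\Lambda$, I would carry out the argument intrinsically, so that the constant $C$ is seen to depend only on the lattice constant $\lambda$ and on $\varphi$.

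First I would fix $0<\delta<\lambda/2$ and split the plane into the disks $B(\lambda_{m,n},\delta)$ and their complement. On a disk $B(\lambda_{m_0,n_0},\delta)$ exactly one factor of the product vanishes — the factor $(1-z/\lambda_{m_0,n_0})$ away from the origin, or the leading factor $z$ near it — and its modulus equals $d(z,\Lambda)/|\lambda_{m_0,n_0}|$ (respectively $|z|=d(z,\Lambda)$), since $\lambda_{m_0,n_0}$ is then the nearest lattice point. I would factor this out and reduce to a uniform lower bound for $K_\varphi(-|z|^2)$ times the remaining product. Off all the disks $d(z,\Lambda)\ge\delta$ is bounded below, so there it suffices to bound $K_\varphi(-|z|^2)|\sigma(z)|$ below by a positive constant, and the reduction is identical.

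To control the product I would split it, for a given $z$, into the far factors with $|\lambda_{m,n}|\ge 2|z|$ and the near factors with $|\lambda_{m,n}|<2|z|$. For a far factor one has $|z/\lambda_{m,n}|\le\tfrac12$, and writing $E_\Lambda(z;m,n)=E(z/\lambda_{m,n})$ the bound \eqref{eq:E_inequality} together with the uniform estimate \eqref{eq:Omega} on $\Omega$ yields $|1-E_\Lambda(z;m,n)|\le M\,|z/\lambda_{m,n}|^3$; since $\sum_{(m,n)\neq(0,0)}|\lambda_{m,n}|^{-3}=\lambda^{-3}\sum_{(m,n)\neq(0,0)}(m^2+n^2)^{-3/2}<\infty$, the logarithm of the far product is absolutely summable, so the far product stays bounded above and below uniformly in $z$. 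The near factors, numbering $O(|z|^2)$, are the only ones contributing to the growth of $|\sigma(z)|$.

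The single genuine obstacle is then the balancing of this near product against the weight: the factors $\varphi\!\left(\psi_1 z/\lambda_{m,n}+\psi_2 z^2/\lambda^2_{m,n}\right)$ grow, as $|z|\to\infty$, at the rate dictated by the order $\rho$ and degree $\sigma$ of $\varphi$, and one must show that this growth is cancelled by $K_\varphi(-|z|^2)$ up to a factor bounded above and below. This is the exact analogue of the classical comparison $|\sigma(z)|\,e^{-|z|^2/2}\asymp d(z,\Lambda)$, whose proof rests on the quasi-periodicity of the Weierstrass $\sigma$-function and on matching its quadratic exponential growth to the Gaussian weight. Here the matching is supplied by the defining Mellin relation \eqref{Eq: Mellin transform}, which pins $K_\varphi(-|z|^2)$ to the reciprocal growth rate of the product; establishing this comparison uniformly in $z$, so as to extract a single positive constant $C$, is where the main effort goes, and it is carried out along the lines of \cite{K92,K292}.
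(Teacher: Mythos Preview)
The paper gives no proof of this lemma at all: it is simply stated and then invoked inside the proof of Lemma~\ref{lemma:ineq}. Your opening observation---that $\sigma(\cdot\,;\Lambda)$ is literally $g(\cdot\,;\Gamma)$ with $\Gamma=\Lambda$ and $z_{00}=0$, so that the immediately preceding lemma for $g$ applies verbatim---is exactly the reduction the paper must be silently relying on, and it already finishes the job.

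Your subsequent intrinsic outline (disks around lattice points versus their complement, near/far splitting of the product, absolute convergence of the far product via $\sum_{(m,n)\neq(0,0)}|\lambda_{m,n}|^{-3}<\infty$, and the balancing of the near product against the weight $K_\varphi(-|z|^2)$) is the classical Seip route and is in fact considerably more careful than the paper's own proof of the $g$-lemma, which simply writes $|E(z)|\ge |z^3\Omega(z)|-1$ from the triangle inequality and asserts the final estimate in one line. What you flag as the ``single genuine obstacle''---matching the growth of the near factors to the decay of $K_\varphi$ so as to extract a uniform constant---is precisely the step the paper never makes explicit either; it is absorbed into the asserted constant $c$ in the $g$-lemma. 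So your proposal is not only aligned with the paper's (implicit) argument but identifies more sharply where the analytic content actually lies.
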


This allows us to prove the following lemma.

\begin{lemma}\label{lemma:ineq}
	Let $\Gamma$ be uniformly close to the square lattice of density $\beta_\varphi$ and $K_\varphi$ be non-zero and monotonic. Then, there exists constants $c,c_1$, and $c_2$
	such that for all $z$ we have
	\begin{equation}
		\label{eq:inqaulity}
		c_1 \gamma(z) e^{-c|z|\log |z|} \cdot {\rm{dist}}(z,\Gamma) ~\leq~ |K_\varphi(-|z|^2) g(z)|~\leq~  c_2 \gamma(z) e^{c|z|\log|z|}
	\end{equation}
	where
	\begin{equation}\label{eq:gamma}
		\gamma(z) =
		\begin{cases}
			1, & {\rm growth}~(K_\varphi) < {\rm growth}~(e^z)\\
			|K_\varphi(z)|, & \text{otherwise}
		\end{cases},
	\end{equation}
	and for every $z_{mn}\in \Gamma$ we have $\left| K_\varphi(-|z_{m,n}|^2) g'(z_{mn}) \right| \geq c_1 e^{-c|z_{mn}|\log |z_{mn}|}$.
\end{lemma}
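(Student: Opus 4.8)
The plan is to estimate $\log\bigl|K_\varphi(-|z|^2)g(z)\bigr|$ by exploiting the product structure of $g$ in \eqref{eq:g} together with the tools already at our disposal: the elementary bound $|1-E(z)|\le|z|^3|\Omega(z)|$ of \eqref{eq:E_inequality}, in which $|\Omega|$ is bounded on the unit disk by a constant independent of $z$ by \eqref{eq:Omega}, and the lower estimate $K_\varphi(-|z|^2)|\sigma(z)|\ge C\,\mathrm{d}(z,\Lambda)$ of Lemma~\ref{lemma:sigma}. First I would fix $z$ and split the product $\prod_{(m,n)\neq(0,0)}E_\Gamma(z;m,n)$ into a \emph{far} part, the indices with $|z_{mn}|>2|z|$, and a \emph{near} part, $|z_{mn}|\le 2|z|$. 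Since $\Gamma$ is uniformly close to $\Lambda$ (so $|z_{mn}-\lambda_{mn}|<Q$) and uniformly discrete ($q(\Gamma)>0$), the number of nodes in a disk of radius $r$ is $\sim\pi r^2$, which is what controls all the sums below.

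For the far factors the argument $\psi_1\frac{z}{z_{mn}}+\psi_2\frac{z^2}{\lambda_{mn}^2}$ lies in the unit disk, so \eqref{eq:E_inequality} and \eqref{eq:Omega} give $\bigl|\log E_\Gamma(z;m,n)\bigr|\le C\,|z/z_{mn}|^3$; summing against the lattice counting function yields $\sum_{|z_{mn}|>2|z|}|z/z_{mn}|^3=O(|z|^2)$. The genuine (order $2\rho$) growth is carried by the near factors, where the argument of $\varphi$ is large for small $|z_{mn}|$ and $\varphi$ grows like $e^{\sigma|\cdot|^{\rho}}$; I would estimate $\sum_{|z_{mn}|\le 2|z|}\log|E_\Gamma(z;m,n)|$ by comparing it with the corresponding integral, the Riemann-sum discrepancy and the boundary layer $|z_{mn}|\approx|z|$ contributing $O(|z|\log|z|)$, and the single vanishing factor contributing $|z-z_{mn}|\approx\mathrm{dist}(z,\Gamma)$. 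The point of the normalization \eqref{Psis}, which annihilates the coefficients of $z$ and $z^2$ in $E$, is precisely that the resulting leading growth of $\prod E_\Gamma$ is matched by the weight: through the moment relation $1/\varphi_n=\mathcal M(\tilde K_\varphi)(n+1)$ of \eqref{Eq: Mellin transform}, the factor $K_\varphi(-|z|^2)$ cancels this leading part, leaving only the subexponential error $e^{\pm c|z|\log|z|}$ and the distance factor.

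Assembling the two sides then gives \eqref{eq:inqaulity}. The lower bound combines the lower estimate of Lemma~\ref{lemma:sigma} (and of the preceding lemma for $g$) with the upper control of the subexponential error, the single nearest factor $(1-z/z_{mn})$ of $g$ producing $\mathrm{dist}(z,\Gamma)$; the upper bound follows by removing that nearest factor, which is $O(|z|)$, and bounding near and far parts as above. The prefactor $\gamma(z)$ in \eqref{eq:gamma} records exactly whether $K_\varphi$ contributes to the leading growth: when its growth is below that of $e^z$ the weight is negligible at leading order and $\gamma=1$, otherwise one keeps the honest factor $|K_\varphi(z)|$. For the derivative bound at a node $z_{mn}\in\Gamma$, only the term differentiating the vanishing factor survives, so $g'(z_{mn})$ equals the same product with that factor deleted and replaced by $(-1/z_{mn})\varphi(\psi_1+\psi_2 z_{mn}^2/\lambda_{mn}^2)$; the identical near/far analysis applies, now with $\mathrm{dist}$ replaced by the uniform separation $q(\Gamma)>0$, giving $|K_\varphi(-|z_{mn}|^2)g'(z_{mn})|\ge c_1 e^{-c|z_{mn}|\log|z_{mn}|}$.

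The main obstacle I anticipate is the middle step: showing that the leading part of $\sum_{(m,n)}\log|E_\Gamma(z;m,n)|$ cancels $\log K_\varphi(-|z|^2)$ up to an $O(|z|\log|z|)$ remainder, \emph{uniformly} in $z$. This requires a careful comparison of the discrete lattice sum with its continuous logarithmic-potential counterpart, controlling both the boundary layer $|z_{mn}|\approx|z|$ and the contribution of the $\varphi$-factors via Stirling-type asymptotics as in the worked examples \eqref{eq:rl}--\eqref{eq:ru}; the case split defining $\gamma(z)$ is dictated by this comparison and must be tracked throughout.
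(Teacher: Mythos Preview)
Your plan attacks $|K_\varphi(-|z|^2)g(z)|$ head-on, splitting the product $\prod E_\Gamma$ into near and far parts and then trying to show that the leading growth of the near part is cancelled by $K_\varphi(-|z|^2)$. The paper takes a different and shorter route that sidesteps precisely the obstacle you flagged at the end.

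Instead of estimating $g$ directly, the paper writes
\[
g(z)=\frac{\sigma_\varphi(z)}{\mathrm{d}(z,\Lambda)}\,\mathrm{d}(z,\Gamma)\,h(z),
\qquad
h(z)=h_1(z)\,h_2(z),
\]
where $h_1$ collects the ratios of the linear factors $(1-z/z_{mn})/(1-z/\lambda_{mn})$ and $h_2$ collects the ratios of the $\varphi$-factors. The point is that $h_1$ is completely $\varphi$-independent, so its two-sided bound $e^{\pm c|z|\log|z|}$ is quoted verbatim from Seip--Wallst\'en. For $h_2$ the paper splits into $|z_{mn}|\le 2|z|$ and $|z_{mn}|>2|z|$ as you do, but because numerator and denominator share the \emph{same} quadratic term $\psi_2 z^2/\lambda_{mn}^2$, each ratio is close to $1$ and a crude min/max argument using the assumed monotonicity of $\varphi$ suffices; no Riemann-sum comparison is needed. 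Finally one multiplies by $K_\varphi(-|z|^2)$ and invokes Lemma~\ref{lemma:sigma}, $K_\varphi(-|z|^2)|\sigma(z)|\ge C\,\mathrm{d}(z,\Lambda)$, so the $\mathrm{d}(z,\Lambda)$ cancels and $\mathrm{d}(z,\Gamma)$ remains.

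The conceptual difference is this: the hard cancellation between the weight $K_\varphi$ and the order-$2\rho$ growth of the infinite product is already encapsulated in Lemma~\ref{lemma:sigma} for the \emph{reference} lattice $\Lambda$. By passing to the ratio $h=g/\sigma\cdot\mathrm{d}(z,\Lambda)/\mathrm{d}(z,\Gamma)$ one never has to reproduce that cancellation for $\Gamma$; one only has to show that perturbing the zeros from $\lambda_{mn}$ to $z_{mn}$ changes the product by at most $e^{\pm c|z|\log|z|}$. Your direct approach would, in effect, re-derive Lemma~\ref{lemma:sigma} with $\Gamma$ in place of $\Lambda$, and the step you singled out (matching $\sum\log|E_\Gamma|$ with $-\log K_\varphi(-|z|^2)$ via the moment relation) is genuinely the hard part and is not supplied by the tools in the paper. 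The ratio decomposition buys you that step for free.
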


\begin{proof}
	In a similar spirit to the proof in \cite{K292} [Page 3, Lemma 2.2] , we write
	$$g(z) = \frac{\sigma_\varphi(z)}{d(z,\Lambda)}d(z,\Gamma)h(z)\implies h(z) = \frac{g(z)}{\sigma_\varphi(z)}\frac{d(z,\Lambda)}{d(z,\Gamma)}.$$
	
	We estimate $h(z)$ by factorizing and estimating each of the components of
	$$h(z) = h_1(z) h_2(z),$$
	where
	\begin{equation}
	\label{eq:h1}
	h_1(z)=\frac{d(z,\Lambda)}{d(z,\Gamma)} \frac{z-z_{0,0}}{z}
	\prod\limits_{m,n} \frac{\left( 1-\frac{z}{z_{m,n}} \right)}{\left( 1 - \frac{z}{\lambda_{m,n}}\right)}.
	\end{equation}
	and
	\begin{equation}
	\label{eq:h2}
	h_2(z)=\prod\limits_{m,n} \frac{\varphi\left( \psi_1\frac{z}{z_{m,n}}+ \psi_2\frac{z^2}{\lambda_{m,n}}  \right)}{\varphi\left(\psi_1 \frac{z}{\lambda_{m,n}}+\psi_2 \frac{z^2}{\lambda_{m,n}}  \right)}
	=\prod\limits_{|z_{m,n}|>2|z|}\left(\dots\right)\cdot\prod\limits_{|z_{m,n}|\leq2|z|}\left(\dots\right).
	\end{equation}	
	
	We estimate $h(z)$ for a non-zero monotonic $\varphi >0$, for $|z_{m,n}|\leq 2|z|$ by finding $a$, $b$ such that
	$$ \varphi\left( \psi_1\frac{z}{\lambda_{m,n}}+ \psi_2\frac{z^2}{\lambda_{m,n}}  \right) \leq a, \qquad \varphi\left( \psi_1 \frac{z}{z_{m,n}}+ \psi_2\frac{z^2}{z_{m,n}}  \right) \geq b.$$

	Since $\varphi$ is non-zero and monotonic, it would be sufficient to find the value of $z$ for which $\varphi(z)$ is minimum and maximum, respectively, by solving the quadratic equation for $z$.
	We get $z_{\rm min} = \frac{\lambda_{m,n}}{\lambda_{m,n}}= -1$, and $z_{\rm max} = -\frac{\lambda_{m,n}}{z_{m,n}}$. So
	$$\prod\limits_{|z_{m,n}|\leq2|z|}\frac{\varphi\left(\psi_1 \frac{z}{z_{m,n}}+\psi_2 \frac{z^2}{\lambda_{m,n}}  \right)}{\varphi\left( \psi_1 \frac{z}{\lambda_{m,n}}+ \psi_2\frac{z^2}{\lambda_{m,n}}  \right)} \geq \prod\limits_{|z_{m,n}|\leq 2|z|} \frac{\varphi\left(z_{\rm min}  \right)}{\varphi\left( z_{\rm max}  \right)}. $$

	For the case $|z_{m,n}|>2|z|$ we use the same idea since $K_\varphi$ is non-zero and monotonic to estimate that
	$$\prod\limits_{|z_{m,n}|>2|z|} \frac{\varphi\left( \frac{z}{z_{m,n}}\right)}{\varphi\left( \frac{z}{\lambda_{m,n}} \right)} \geq \prod\limits_{|z_{m,n}|> 2|z|} \frac{\varphi\left(z'_{\rm min}  \right)}{\varphi\left( z'_{\rm max}  \right)},$$
where $z'_{\rm max}$ and $z'_{\rm min}$ are the $z\in \{ z_{m,n} :z_{m,n}>|z| \}$ that give the maximum and minimum of $\varphi(z)$, respectively.
	
	Therefore, we have the bound for $|h_2(z)| $.
	The estimate of $h_1(z)$ is given in~\cite{K292} [Page 4, term with $|h_2(z)|$], and since it is independent of the choice of $\varphi$ we obtain
	$$|h_2(z)|\geq Ce^{-c|z|\log |z|}.$$

	We note that, that if the growth of $\varphi$ is slower than the growth of the exponential, we simply use the estimate of $h_1(z)$, which explains the function $\gamma(z)$ from \eqref{eq:gamma} in the inequality \eqref{eq:inqaulity}.
	
From this we get now
$$|K_\varphi(-|z|^2)g(z) | = \left| K_\varphi(-|z|^2)\frac{ \sigma(z)} {{\rm d}(z, \Lambda)} {\rm d}(z, \Gamma)  h(z) \right| \geq \left| K_\varphi(-|z|^2)\frac{ \sigma(z)} {{\rm d}(z, \Lambda)}   {\rm d}(z, \Gamma) \right| C \gamma(z) e^{-c|z| \log|z|}.$$
	With the estimate $|K_\varphi(-|z|^2) \sigma(z)| \geq \tilde C {\rm d}(z, \Lambda)$ we obtain the result. This completes the proof.
\end{proof}

\begin{example}
	Consider the trivial example with $K_\varphi(z)=e^z$ in which we are back in the case of \cite{K92} and have the inequality
	\begin{equation*}
		\label{}
		c_1  e^{-c|z|\log |z|} \cdot {\rm dist}(z,\Gamma) ~\leq~ |e^{-|z|^2} g(z)|~\leq~  c_2  e^{c|z|\log|z|},
	\end{equation*}
	in which $c_1,c_2,c_3$ depend on $q(\Gamma)$ and $Q$ as previously defined.
\end{example}

Using Lemma \ref{lemma:ineq} we get the following Lagrange-type interpolation formula:
\begin{lemma}\label{lemma:3.2}
	Let $\Gamma=\{z_{m,n}\}$ be a uniformly close to the square lattice
	$\Lambda$, 	of density $\beta_\varphi$, and let $g$ be our fractional $\sigma$-function associated to $\Gamma$. If $\alpha < \beta_\varphi$, then we have
	\begin{equation*}
		f(z) = \sum_{m,n}\frac{f(z_{mn})}{g'(z_{mn})}\frac{g(z)}{z-z_{mn}}, \qquad z_{mn} \in \Gamma.
	\end{equation*}	
with uniform convergence on compact sets for $f\in \cF^{\infty}$.
\end{lemma}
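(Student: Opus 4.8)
The plan is to establish the formula by a contour-integration (residue) argument, following the classical scheme of \cite{K92} but replacing the classical Weierstrass estimates by the two-sided bounds of Lemma~\ref{lemma:ineq}. First I would record the structural facts about $g$. By construction \eqref{eq:g} the function $g$ is entire, and its only zeros are the points $z_{mn}\in\Gamma$, each simple: the fractional Weierstrass factors $\varphi\big(\psi_1 z/z_{mn}+\psi_2 z^2/\lambda_{mn}^2\big)$ were designed precisely so that the product converges without creating extra zeros, and the vanishing comes solely from the factors $1-z/z_{mn}$. The lower bound on $\big|K_\varphi(-|z_{mn}|^2)g'(z_{mn})\big|$ in Lemma~\ref{lemma:ineq} then confirms $g'(z_{mn})\neq 0$, so each quotient $g(z)/(z-z_{mn})$ is entire (removable singularity) and every summand in the asserted identity is well defined, with the $(m,n)$-th term equal to $f(z_{mn})$ at $z=z_{mn}$.

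Next, for fixed $z$ I would introduce the Cauchy-kernel integral
\[
I_R(z)=\frac{1}{2\pi i}\oint_{|\zeta|=R}\frac{f(\zeta)}{g(\zeta)}\,\frac{g(z)}{\zeta-z}\,d\zeta,
\]
with $R$ chosen so that the circle $|\zeta|=R$ misses all lattice points. The integrand is meromorphic with a simple pole at $\zeta=z$ of residue $f(z)$, and simple poles at each interior $\zeta=z_{mn}$ of residue $-f(z_{mn})g(z)\big/\big(g'(z_{mn})(z-z_{mn})\big)$. The residue theorem therefore yields
\[
I_R(z)=f(z)-\sum_{|z_{mn}|<R}\frac{f(z_{mn})}{g'(z_{mn})}\,\frac{g(z)}{z-z_{mn}},
\]
so the whole statement reduces to proving that $I_R(z)\to 0$ along a suitable sequence $R=R_k\to\infty$.

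The decay of $I_R$ is where Lemma~\ref{lemma:ineq} is decisive. On the contour I would estimate, using the lower bound in \eqref{eq:inqaulity},
\[
\left|\frac{f(\zeta)}{g(\zeta)}\right|
=\frac{\bigl|f(\zeta)\,K_\varphi(-|\zeta|^2)\bigr|}{\bigl|K_\varphi(-|\zeta|^2)\,g(\zeta)\bigr|}
\leq\frac{\bigl|f(\zeta)\,K_\varphi(-|\zeta|^2)\bigr|}{c_1\,\gamma(\zeta)\,e^{-c|\zeta|\log|\zeta|}\,\mathrm{dist}(\zeta,\Gamma)}.
\]
Since $f\in\cF^{\infty}$, the weighted modulus $\bigl|f(\zeta)K_\varphi(-|\zeta|^2)\bigr|$ is controlled by the growth attached to the parameter $\alpha$, while the denominator carries the growth of the lattice of density $\beta_\varphi$ through $\gamma(\zeta)$; the hypothesis $\alpha<\beta_\varphi$ is exactly what forces the resulting ratio to decay faster than any fixed power, beating both the correction $e^{c|\zeta|\log|\zeta|}$ and the contour length $2\pi R$. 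To keep the denominator usable I would pick the radii $R_k$ strictly between consecutive rings of $\Gamma$: uniform discreteness $q(\Gamma)>0$ together with the closeness $|z_{m,n}-\lambda_{m,n}|<Q$ produces a sequence $R_k\to\infty$ on which $\mathrm{dist}(\zeta,\Gamma)\geq\delta>0$ uniformly. Then $|I_{R_k}(z)|\leq 2\pi R_k\sup_{|\zeta|=R_k}|\cdots|\to 0$, and passing to the limit gives the interpolation formula.

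Finally I would upgrade pointwise to uniform convergence on compacta: for $z$ in a fixed compact $K$ the factors $|g(z)|$ and $|z-z_{mn}|^{-1}$ are bounded uniformly (the finitely many $z_{mn}\in K$ being handled by the removable singularity), while the tail $\sum_{|z_{mn}|\geq R_k}$ is exactly what $I_{R_k}$ estimates, so the bound is uniform in $z\in K$. The step I expect to be the main obstacle is the contour estimate above: one must check that the generalized weight $K_\varphi$, entering through $\gamma(z)$ and the correction $e^{\pm c|z|\log|z|}$ of Lemma~\ref{lemma:ineq} rather than through the clean Gaussian of the classical setting, still leaves a net super-polynomial decay once $\alpha<\beta_\varphi$, and that the admissible radii $R_k$ can indeed be chosen so the lower bound on $|g|$ does not degenerate on the circle.
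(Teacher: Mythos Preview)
Your proposal is correct and coincides with the paper's approach: the paper does not give a detailed argument but simply states that the proof follows the same lines as Lemma~3.1 in \cite{K292}, which is precisely the residue/contour-integral scheme you outline, with the classical Weierstrass--$\sigma$ estimates replaced by the two-sided bounds of Lemma~\ref{lemma:ineq}. You have in fact written out in more detail the steps the paper leaves to the reader, including the selection of radii $R_k$ bounded away from $\Gamma$ and the identification of the $\alpha<\beta_\varphi$ hypothesis as the source of the net decay on the contour.
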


The proof follows the same lines as the proof of Lemma 3.1 in~\cite{K292}.

This allows us now to prove our main theorems in the case of quasi-periodicity.
\begin{proof}(for Theorem~\ref{th:sam_and_inter_2} and Theorems~\ref{th:sam_and_inter_1})

Beurling (\cite{Beurling}, p.356) showed that for a square lattice $\Lambda$ of density $\frac{\alpha}{\pi}$, it is possible to find a subset $R\subseteq\Gamma$ uniformly closed to the lattice with $d< \frac{\alpha}{\pi}\leq d_1$ which implies that one can replace $\Gamma$ with $R$ and always assume $\Gamma$ to be uniformly close to the square lattice of density $\beta_\varphi=D^{-}(\Gamma)$. The same holds in our case and, therefore, we can assume $\Gamma$ to be uniformly close to the square lattice of density $\beta_\varphi=D^{-}(\Gamma)$.

Consider now the translation operator $T_a$
\begin{equation}
	\label{eq:m}
	(T_a f)(z) := \sqrt{\frac{K_\varphi\left( -|z-a|^2 \right)}{K_\varphi\left( -|z|^2 \right)}} f(z-a).
\end{equation}

We note that this translation acts isometrically in $\cF_\varphi$, since taking $w=z-a$ we have
	\begin{align*}
		\int\limits_{\mathbb C} |T_a f (z) |^2 d\mu(z) &= \iint\limits_{\BR^2}  \left|\frac{K_\varphi\left( -|z-a|^2 \right)}{K_\varphi\left( -|z|^2 \right)}\right| |f(z-a)|^2d K_\varphi(-|z|^2)dxdy,\\
		&= \iint\limits_{\BR^2}  \left|\frac{K_\varphi\left( -|w|^2 \right)}{K_\varphi\left( -|w+a|^2 \right)}\right| |f(w)|^2d K_\varphi(-|w+a|^2)dxdy,\\
		&= \iint\limits_{\BR^2}  \left|K_\varphi\left( -|w|^2 \right)\right| |f(w)|^2 dxdy,\\
		&=\int\limits_\mathbb C |f(w) |^2d\mu(w).
	\end{align*}

That these translation operators are isometrically invariant implies that $\Gamma+z$ is
a sampling set  if and only if $\Gamma$ is a sampling set. The same holds true for interpolation.

In particular, we get
$$
\iint\limits_{\BR^2}  \left|K_\varphi\left( -|w|^2 \right)\right| |f(w)|^2 dxdy =\sum_{k,l}\iint\limits_{R}  \left|K_\varphi\left( -|w|^2 \right)\right| | T_{\lambda_{k,l}}f(w)|^2 dxdy
$$
with $R=\{z=x+iy: |x|<\frac{1}{2}\sqrt{1/\alpha}, |y|<\frac{1}{2}\sqrt{1/\alpha}\}$ and $\alpha<\beta_\varphi$.

In order to estimate the terms $| T_{\lambda_{k,l}}f(w)|^2$ on the right-hand side we use Lemma \ref{lemma:3.2} with $a=\lambda_{k,l}$. This gives us 
\begin{align}
	\label{eq:Mf}
	(T_a f)(w)&= \sum_{m,n}\frac{(T_a f)(z_{m,n}+a)}{g'_{a} (z_{m,n}) }\frac{g_a(w)}{w-z_{m,n}-a}.
\end{align}
where the function $g_a(z)=g(z;\Gamma+a)$ is related to the lattice $\Gamma+a$ instead of the function $g$ where we sum over the lattice $\Gamma$ (c.f. \eqref{eq:g}).

Cauchy-Schwarz inequality implies
$$
| T_{\lambda_{k,l}}f(w)|^2 \leq \left(\sum_{m,n}\frac{(|T_a f)(z_{m,n}+a)|^2K_\varphi(-|z_{m,n}|^2)}{K_\varphi(-|z_{m,n}|^2) |g'_{a} (z_{m,n})|^2}\right)\left(\sum_{m,n}\frac{|g_a(w)|^2}{|w-z_{m,n}-a|^2}\right).
$$
Using Lemma \ref{lemma:ineq} we obtain
$$
| T_{\lambda_{k,l}}f(w)|^2 \leq C_1\left(\sum_{m,n}(|T_a f)(z_{m,n}+a)|^2K_\varphi(-|z_{m,n}|^2)e^{-c|z_{m,n}|\ln |z_{m,n}|}\right)\left(\sum_{m,n}\frac{|g_a(w)|^2}{|w-z_{m,n}-a|^2}\right)
$$
which leads to
\begin{eqnarray*}
&& \iint\limits_{R} | T_{\lambda_{k,l}}f(w)|^2 |K_\varphi( -|w|^2)|  dxdy\\ & \leq &  C_1\left(\sum_{m,n}(|T_a f)(z_{m,n}+a)|^2K_\varphi(-|z_{m,n}|^2)e^{-c|z_{m,n}|\ln |z_{m,n}|}\right)\left(\sum_{m,n}\iint\limits_{R}\frac{|g_a(w)|^2}{|w-z_{m,n}-a|^2}K(-|w|^2)dxdy\right)
\end{eqnarray*}

Collecting everything together we get
$$ \iint_{\BC}|f(z)|^2 K_\varphi(-|z|^2)dxdy \leq C_2 \sum_{m,n} |f(z_{m,n})|^2K_\varphi (-|z_{m,n}|^2)<\infty,$$
to prove Theorem \ref{th:sam_and_inter_1} in which it is sufficient to verify the left hand side of the inequality in Equation \eqref{eq:sampling_set}.

One could prove Theorem \ref{th:sam_and_inter_3} in a similar manner.

To prove Theorem~\ref{th:sam_and_inter_2} we write $f$ as:
$$
f(z) = \sum_{m,n} a_{mn}\sqrt{\frac{K_\varphi\left( -|z-a|^2 \right)}{K_\varphi\left( -|z|^2 \right)}}\frac{g_{-z_{mn}}(z-z_{mn})}{z-z_{mn}},
$$
Then, the interpolation problem is solved directly from the above formula using the translation operator \eqref{eq:Mf} and Lemma \eqref{lemma:ineq} and this completes the proofs.
\end{proof}

\section{Proofs of main theorems for the subharmonic case}\label{sec:proofs}

Under the condition that there exists subharmonic $\phi$, such that $\phi(z)=-\ln(K_\varphi(-|z|^2))$ we can prove the result by showing the required condition as in \cite{Beurling1}.
Impose the condition of existence of a subharmonic function $\phi$ such that and monotone $K_\varphi(-|z|^2)$,
\[ e^{-\phi} = K_\varphi(-|z|^2),\]
or equivalently,
\[ \phi = -\ln (K_\varphi (-|z|^2). \]
We note that $\phi$ is subharmonic as it is bounded above by $\ln(z)$ for $K_\varphi(-|z|^2)>0$.
Using the notation $\partial_z K_\varphi(-|z|^2)= \bar{z}(\partial_x K_\varphi(x))\mid_{x=-|z|^2} $ and similarly for $\partial_{\bar{z}}$,
the Laplacian acting on $\phi$ gives
\begin{equation}
	\label{eq:lap}
	\begin{aligned}
		\Delta \phi(z)&=\frac{1}{4} \frac{\partial}{\partial \overline{z}}\frac{\partial}{\partial z}\left(-\ln \left(K_{\varphi}(-z \bar{z})\right)\right)\\
		&=\frac{1}{4}\left[ 
		\frac{K_{\varphi}(-z \bar{z}) \partial_{\bar{z}}K_{\varphi}(-z \bar{z})
			+ z\bar{z}\partial_{\bar{z}}K_{\varphi}(-z \bar{z})\partial_{z}K_{\varphi}(-z \bar{z})
			-z\bar{z}\partial_z\partial_{\bar{z}}K_{\varphi}(-z \bar{z})
		}
		{K^2_{\varphi}(-z \bar{z})} \right]\\
			&=\frac{1}{4}\left[ 
		\frac{K_{\varphi}(-|z|^2) \partial_{\bar{z}}K_{\varphi}(-|z|^2)
			+|z|^2\partial_{\bar{z}}K_{\varphi}(-|z|^2)\partial_{z}K_{\varphi}(-|z|^2)
			-|z|^2\partial_z\partial_{\bar{z}}K_{\varphi}(-|z|^2)
		}
		{K^2_{\varphi}(-|z|^2)} \right] \\
		&=\frac{1}{4}\left[ 
		\frac{ \partial_{\bar{z}}K_{\varphi}(-|z|^2)}{K_{\varphi}(-|z|^2)}
		+\frac{|z|^2\partial_{\bar{z}}K_{\varphi}(-|z|^2)\partial_{z}K_{\varphi}(-|z|^2)}{K^2_{\varphi}(-|z|^2)}
	-\frac{|z|^2\partial_z\partial_{\bar{z}}K_{\varphi}(-|z|^2)
		}
		{K^2_{\varphi}(-|z|^2)} \right] 
	\end{aligned}
\end{equation}
Our goal would be to provide an upper and lower bound for $\Delta \phi$.  
That is, we want $m$ and $M$ such that $0< m\leq\Delta (\phi(z))\leq M< +\infty.$
By assumption, $K_{\varphi}(-z \bar{z})$ has the form  $e^{-\phi} = K_\varphi(-|z|^2)$ and $K_\varphi(-|z|^2)$ is a weight function and so it is positive and bounded. 
Thus, for each $ K_\varphi(-|z|^2)$ we know we have $M'_\varphi,m'_\varphi$ such that $0<m'_\varphi <K_\varphi(-|z|^2)<M'_\varphi<\infty$.\\

To show that $\Delta \phi$ is bounded from above, we bound the last two terms of \eqref{eq:lap} under the condition that
\begin{equation}\label{eq:cond}
	\partial_{\bar{z}} \left(\frac{\partial_z K_{\varphi}(-|z|^2)}{K_\varphi(-|z|^2)}\right) 
	=
	\frac{K_{\varphi}(-|z|^2)\partial_{z\bar{z}} K_{\varphi}(-|z|^2) -\partial_{\bar{z}}K_{\varphi}(-|z|^2)\partial_{z}K_{\varphi}(-|z|^2) }{K_{\varphi(-|z|^2)}}
	 \sim \frac{1}{|z|^2}.
\end{equation}
Multiplying both sides by $|z|^2$, we get,
\begin{equation*}\label{eq:4321}
	|z|^2\frac{K_{\varphi}(-|z|^2)\partial_{z\bar{z}} K_{\varphi}(-|z|^2)}{K_{\varphi(-|z|^2)}}
	-|z|^2\frac{ \partial_{\bar{z}}K_{\varphi}(-|z|^2)\partial_{z}K_{\varphi}(-|z|^2) }{K_{\varphi(-|z|^2)}}
	\sim 1.
\end{equation*}
For large $z$, since $K_{\varphi}(-|z|^2)$ is bounded we can treat it as constant multiplication.
Hence 
\[
|z|^2\frac{\partial_{z\bar{z}} K_{\varphi}(-|z|^2)}{K_{\varphi(-|z|^2)}}
-|z|^2\frac{ \partial_{\bar{z}}K_{\varphi}(-|z|^2)\partial_{z}K_{\varphi}(-|z|^2) }{K_{\varphi(-|z|^2)}}
\sim 1
\]
Hence the last two terms of Equation \eqref{eq:lap} have constant asymptotic behavior.
By condition \eqref{eq:cond}, $\frac{ \partial_{\bar{z}}K_{\varphi}(-|z|^2)}{K_{\varphi}(-|z|^2)}$  is also bounded from above hence, we have $M$ such that $\Delta \phi <M<\infty$.\\

A lower bound follows directly from the restriction that $K_\varphi(-|z|^2)$ is monotone, then $\partial_z K_{\varphi}(-|z|^2)>0$ and $ \partial_{\bar{z}}K_{\varphi}(-|z|^2)>0$, so a minimum exists, and consequently we have $m$ such that $m\leq\Delta \phi.$

To verify that $\Delta \phi$ is bounded away from zero, we note $K_\varphi(-|z|^2)$ is monotone, then $\partial_z K_{\varphi}(-|z|^2)\geq c> 0$ for some constant $c$.
So we must only verify that the two last terms of \eqref{eq:lap} are non negative.
By dividing the last two terms of \eqref{eq:lap} by $|z|^2$, it would be equivalent to solve the differential inclusion of the form $-\partial_x u(x) +u^2(x)\geq 0$, where $u(x) = \partial_{z}K_{\varphi}(-|z|^2)$. 
Since the ode
$-\partial_x u(x) +u^2(x) =0$
has general solution of the form $u(x) = \frac{1}{C-x}>0$ for $x\neq C$.
Hence, we are bounded away from $0$, meaning there exists $m$ and $M$ such that $0< m\leq\Delta (\phi(z))\leq M< +\infty.$\\

Additionally, we would need to show that $\Delta \phi$ is also uniformly Lipschitz, but by considering $\psi$ defined by
\[
\psi(z)=\frac{1}{zR^2}\int_{D(z,R)}\phi(z)d\mu(z),
\]
instead of $\phi$ we can always assume $\phi$ to be uniformly Lipschitz.\\

Hence we meet the necessary conditions, and result follows from \cite[Theorem 1 and Theorem 2]{Beurling1}.

Such $\phi$ exists for each holomorphic $K_\varphi(-|z|^2)$, namly we can consider the few example. 

\begin{example}
	Following the classical case in which $K_\varphi(z)= e^z$, as in Example \ref{exponential}. We get 
	\[\phi(z) = -\ln(e^{-|z|^2}) =|z|^2.\]
\end{example}

\begin{example}
	Following Example \ref{ex:eab}, in which $K_\varphi(x) = e^{ -a(-x)^b}$ for $x>0$, where $b >0$, ${\rm Re}(a)>0.$ Then, we have $\varphi(z)=\sum_{n=0}^\infty \varphi_n z^n,$ where
		$$\varphi_n = \frac{b a^{\frac{n+1}{b}} }{\Gamma\left( \frac{n+1}{b} \right)}, \qquad n =0, 1, 2, \ldots$$
	Then $\phi$ is given by 
	\[ 
	\phi(z) = -\ln(e^{-a|z|^b})=a|z|^b.
	\] 	
\end{example}

\begin{example}
Following Example \ref{ex:eab}, in which $K_\varphi(z) =\frac{1}{1-z}$ for $|z|<1$, we can find a subharmonic functions
$$\phi(z)=-\ln\left(\frac{1}{1-z}\right),$$
for all $z$ but in the ball of radius $1$.
\end{example}

\section{Applications}

Gabor Frames first developed by D. Gabor in 1946 in the area of information theory ~\cite{Gabor}, and J. Von Neumann in quantum mechanics at the same time.
Gabor Frames are used to establish frames for function systems depending on continuous parameters in modern signal and image processing.
For Gabor systems, Gr\"ochenig and Lyubarskii established a construction method with Hermite functions as window functions, which allows to connect the Gabor system with an orthonormal system in the Fock space via the Bargmann transform.	
This allows to reduce the problem of  lattice constants for the frame parameters to the one of proving the  uniqueness of sets of entire functions in the Fock space.

An application would be to perform a similar investigation of Gabor frames and sufficient conditions to form a frame in $L^2(\mathbb{R})$. In a similar spirit to the one done by K. Gr\"ochenig and Y. Lyubarskii (see ~\cite{GL2007, GL2009}) based on Hermite functions $h_n$, but in our framework of generalized differentiation through the generalized Bargmann transform and generalized Weierstrass-$\sigma$ function defined  in Sections~\ref{sec:B} and~\ref{sec:proofs}, respectively.\\

Let us recall the necessary results in the following theorem.

\begin{theorem}\label{thm:gerald}
	For the modified Bargmann transform $\tilde{\mathcal{B}}: L^2(\mathbb{R}) \to \mathcal{F}_\varphi$ we have:
	\begin{enumerate}[(i)]
		\item $\tilde{\mathcal{B}}$ is a unitary mapping.
		\item $\cF_\varphi$ is a RKHS with reproducing kernel $k(z,w)=\varphi(z\bar{w})$ and $F(z)=\iinner{ F , k(z,\cdot)}_{\cF,\varphi}$.
		\item Let $h_n(t)=e^{\pi t^2} \frac{d^n}{dt^n} (e^{-2\pi t^2 })$ denote the $n-$th Hermite function, then $\tilde{\mathcal{B}} h_n(z) = \pi^{\frac{n}{2}} ({\varphi_n})^{-\frac{1}{2}} z^n$.
	\end{enumerate}
\end{theorem}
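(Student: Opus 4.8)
The plan is to verify the three claims in order, treating (iii) as the computational anchor from which (i) and (ii) follow essentially by construction. For part (iii), I would start directly from the definition of the modified Bargmann transform in Subsection~\ref{subSec:GBT}, namely $\tilde{\mathcal{B}}f(z)=\sum_{n=0}^\infty f_n\sqrt{\varphi_n}\,z^n$ with $f_n=\inner{h_n,f}_{L^2(\BR)}$. Applying this to $f=h_m$ and using the orthonormality of the Hermite functions $\{h_n\}$ in $L^2(\BR)$ gives $f_n=\inner{h_n,h_m}_{L^2(\BR)}=\delta_{n,m}$, so only the $n=m$ term survives and $\tilde{\mathcal{B}}h_m(z)=\sqrt{\varphi_m}\,z^m$. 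The one subtlety is the factor $\pi^{n/2}$: the normalization of the Hermite functions used in the definition of $\tilde{\mathcal{B}}$ (the $h_n$ with the $\pi^{1/4}$ prefactor, orthonormal in $L^2(\BR)$) differs from the unnormalized $h_n(t)=e^{\pi t^2}\frac{d^n}{dt^n}(e^{-2\pi t^2})$ appearing in the statement of (iii). I would reconcile the two normalizations by computing the $L^2$-norm of the unnormalized Hermite functions, which produces exactly the missing power of $\pi$, yielding $\tilde{\mathcal{B}}h_n(z)=\pi^{n/2}(\varphi_n)^{-1/2}z^n$ after accounting for the $\sqrt{\varphi_n}$ versus $(\varphi_n)^{-1/2}$ convention tied to the normalization of the basis $e_n(z)=\sqrt{\varphi_n}z^n$.

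For part (i), the cleanest route is to exhibit $\tilde{\mathcal{B}}$ as the composition of two unitaries: the coefficient map $f\mapsto(f_n)_{n}$ sending $L^2(\BR)$ to $\ell^2$ (unitary because $\{h_n\}$ is an orthonormal basis), followed by the map $(f_n)_n\mapsto\sum_n f_n\sqrt{\varphi_n}z^n$ into $\cF_\varphi$. The second map is unitary precisely because $\{e_n(z)=\sqrt{\varphi_n}z^n\}$ is the orthonormal basis of $\cF_\varphi$ identified in Subsection~\ref{subSec:IP}: one checks $\iinner{\sqrt{\varphi_n}z^n,\sqrt{\varphi_m}z^m}_{\cF,\varphi}=\sqrt{\varphi_n\varphi_m}\,\frac{\delta_{n,m}}{\varphi_n}=\delta_{n,m}$ using the defining moment relation~\eqref{Eq: Mellin transform}. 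Hence $\tilde{\mathcal{B}}$ carries the orthonormal basis $\{h_n\}$ onto the orthonormal basis $\{e_n\}$ (up to the normalization reconciled in (iii)) and is therefore a unitary isomorphism; this also matches the inverse-operator computation already carried out in Subsection~\ref{subSec:GBT}.

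For part (ii), I would invoke the reproducing kernel structure already established around~\eqref{eq: k_varphi}--\eqref{eq:f_in_H}, where $k_\varphi(z,w)=\varphi(\ov z w)$ is shown to be the reproducing kernel for $\mathcal H(k_\varphi)$ with the inner product $\inner{\cdot,\cdot}_{2,\varphi}$; since $\inner{\cdot,\cdot}_{2,\varphi}=\iinner{\cdot,\cdot}_{\cF,\varphi}$ by~\eqref{Eq:Relation}, the space $\cF_\varphi$ coincides with $\mathcal H(k_\varphi)$ as a reproducing kernel Hilbert space, giving $F(z)=\iinner{F,k(z,\cdot)}_{\cF,\varphi}$ with $k(z,w)=\varphi(z\ov w)$. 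The growth estimate~\eqref{eq:f_entire} guarantees that every element is genuinely an entire function, so the RKHS is well defined. The main obstacle throughout is purely bookkeeping: keeping the two Hermite normalizations, the $\pi^{n/2}$ factors, and the $\sqrt{\varphi_n}$ versus $(\varphi_n)^{-1/2}$ conventions consistent across all three parts, since a single misplaced power of $\pi$ or $\varphi_n$ would break the unitarity in (i) and the kernel identity in (ii).
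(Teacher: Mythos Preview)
Your proposal is correct and follows essentially the same route as the paper: the paper's proof of Theorem~\ref{thm:gerald} is merely the one-line remark that (i) and (iii) are obtained in Subsection~\ref{subSec:GBT} and (ii) in Subsection~\ref{subSec:IP} (specifically equation~\eqref{eq:ContinuousKernel}), which is exactly what you unpack. Your treatment is in fact more explicit than the paper's, and you correctly flag the one genuine subtlety, namely the mismatch between the $\sqrt{\varphi_n}$ in the definition of $\tilde{\mathcal B}$ and the $(\varphi_n)^{-1/2}$ (together with the $\pi^{n/2}$) in the statement of (iii), which the paper does not resolve either.
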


We remark that
$(i)$ and $(iv)$ are obtained in Section \ref{sec:B}.
$(ii)$ is obtained in Section \ref{subSec:IP}, in particular, Equation \eqref{eq:ContinuousKernel}.

Based on this theorem we can now consider the construction of frames for integral transforms of the type
$$
V f(z) = \la k(z, \cdot), f\ra_{L^2(\mathbb{R})}
$$
with $f\in L^2(\mathbb R)$ and kernels of the form
$$
k(z, t)=\sum_{j=0}^\infty c_j(\overline{z}a^*+z a)^jh_0(t)
$$
depending on a complex parameter $z$, with coefficients $c_j$ such that the series converges in $L^2(\mathbb{R})$ with respect to $x$ and converges uniformly on compact sets with respect to $z$. Let us remark that a special case of such transforms is the Gabor transform:
\begin{equation}
		\label{eq:SFT}
		V_{h_n} f(z)
		= \la \pi_z h_n, f \ra_{L^2(\mathbb{R})}
		= \frac{ e^{i\pi x y } }{\sqrt{\pi^n \varphi_n }}\; \frac{1}{\varphi\left({\frac{|z|^2}{2}}\right)}  \sum_{k=0}^{n} {n \choose k} (-\pi \bar{z} )^k D_\varphi^{k} F(z)
	\end{equation}
with $z=x+iy$, which we get in the case where the operators $a$ and $a^*$ generate the Heisenberg algebra, that is $[a, a^\ast] = I$.

Using our generalized Bargmann transform $\tilde{\mathcal{B}}: L^2(\mathbb{R}) \to \mathcal{F}_\varphi$ as $F= \tilde{\mathcal{B}}f$ (c.f. Section~\ref{sec:B}) we have
\begin{align*}
	Vf(z) & = \la k(z,\cdot), f\ra_{L^2(\mathbb{R})}\\
	& = \iinner{ \tilde{\mathcal{B}}k(z,\cdot), F }_{\cF,\varphi}\\
	& = \iinner{ \sum_{j=0}^\infty c_j (\overline{z}M_w+z D_\varphi)^j 1, F }_{\cF,\varphi}\\
	& = \iinner{ \varphi(\overline{z} w), \frac{1}{\varphi(\overline{z} w)}\overline{ \left( \sum_{j=0}^\infty c_j (\overline{z}M_w+z D_\varphi)^j 1 \right) } F }_{\cF,\varphi}\\
    & = \frac{1}{\varphi(|z|^2)}\overline{\left(\sum_{j=0}^\infty c_j (\overline{z}M_w+z D_\varphi)^j 1 \right)(z)} F(z).\\
	\end{align*}

For an invertible matrix $C$ and a lattice $\Lambda = C\BZ^2$, we define $\displaystyle \cG(k,\Lambda) = \{ k(z,\cdot) : z  \in \Lambda   \}.$ $\cG(k,\Lambda) $ is a frame  if there exists constants $A,B$ with $0<A\leq B<\infty$ such that
$$ A\| f \|^2_{L^2(\mathbb{R})} \leq \sum_{z \in\Lambda} |\la f, k(z, \cdot\ra_{L^2(\mathbb{R}^2)} |^2 \leq B \|f \|^2_{L^2(\mathbb{R})},\quad \forall f\in{L^2(\mathbb{R})}.  $$

Here, we are going to adapt the classic scheme by Gr\"ochenig and Lyubarskii~\cite{GL2007, GL2009} to our setting. Of course, we recover the original case if we choose $\varphi (z)=e^{z}$ and $k(z,t)=\pi_\lambda h_n(t) = e^{ 2\pi i y t}h_n(t-x)$.

The following results will provide sufficient conditions for $\cG(k,\Lambda)$ to form a frame and describe its structure. Since in the previous sections we created the same machinery as in the case of Gabor frames with Hermite windows~\cite{GL2007} we can transfer the proofs to our case directly,
{for Gabor frame that $\cG(k,\Lambda)$ is a Bessel sequence.}

\begin{theorem}
	If $\mathcal{G} (k,\Lambda)$ is a frame for $L^2(\mathbb{R})$ then the size of $\Lambda$ satisfies $s(\Lambda) = |{\rm det}(C)|\leq 1,$ where $s(\cdot)$ is the size of $\Lambda$ obtained from the sympletic area of a cell (see also \cite{Palamodov2005}).
\end{theorem}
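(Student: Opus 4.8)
The plan is to transfer the frame condition from $L^2(\mathbb{R})$ to the generalized Fock space $\mathcal{F}_\varphi$ through the modified Bargmann transform, and then to invoke the necessary density condition for sampling sequences proved in Section~\ref{sec:proofs}. First I would use Theorem~\ref{thm:gerald}: since $\tilde{\mathcal{B}}:L^2(\mathbb{R})\to\mathcal{F}_\varphi$ is unitary, the frame inequality for $\mathcal{G}(k,\Lambda)$ can be rewritten purely in terms of $F=\tilde{\mathcal{B}}f\in\mathcal{F}_\varphi$ by means of the identity $Vf(z)=\varphi(|z|^2)^{-1}\overline{P(z)}\,F(z)$ derived just above, where $P$ is the entire function $P(z)=\big(\sum_{j}c_j(\bar z M_w+zD_\varphi)^j 1\big)(z)$. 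Because $\tilde{\mathcal{B}}$ is onto, the resulting inequality
$$A\|F\|^2_{\mathcal{F}_\varphi}\le\sum_{z\in\Lambda}\frac{|P(z)|^2}{\varphi(|z|^2)^2}\,|F(z)|^2\le B\|F\|^2_{\mathcal{F}_\varphi}$$
then holds for every $F\in\mathcal{F}_\varphi$.

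The next step is to recognize the middle term as a weighted sampling functional. For this I would show that the factor $|P(z)|^2\varphi(|z|^2)^{-2}$ is comparable, on the lattice points, to the Fock weight $K_\varphi(-|z|^2)$, so that the displayed two-sided estimate is equivalent to the sampling inequality \eqref{eq:sampling_set}. In the Gabor special case $k(z,\cdot)=\pi_z h_n$ this comparability is already encoded in \eqref{eq:SFT}, and for the general admissible kernel it follows from the order and degree estimates \eqref{eq:norm_discrete_kernel}–\eqref{eq:f_entire} for $\varphi$ together with the assumed convergence of $\sum_j c_j(\bar z a^*+za)^j$. This identifies $\Lambda$ as a sampling set for $\mathcal{F}_\varphi$.

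Once $\Lambda$ is a sampling set, I would apply the necessary direction of Theorem~\ref{th:sam_and_inter_1} (equivalently Theorem~\ref{th:sam_and_inter_3}): a sampling set for $\mathcal{F}_\varphi$ must contain a uniformly discrete subset $\Gamma'$ with density at least the critical value, so that $D^-(\Lambda)\ge\beta_\varphi$ (the closed Landau-type inequality, with equality permitted at the critical boundary). It then remains to convert this density bound into the stated size inequality. Since $\Lambda=C\BZ^2$ is a lattice, its fundamental cell has area $|\det C|$, so $n^{\pm}(r)\sim \pi r^2/|\det C|$ and $D^-(\Lambda)=D^+(\Lambda)$ is a fixed multiple of $|\det C|^{-1}$ under the normalization used in the definition of $D^{\pm}$. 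Interpreting the covolume through the symplectic area of a cell (cf.~\cite{Palamodov2005}), so that $\beta_\varphi$ is precisely the reciprocal critical density, the inequality $D^-(\Lambda)\ge\beta_\varphi$ becomes $s(\Lambda)=|\det C|\le 1$, as claimed.

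The main obstacle I anticipate is the comparability step in the second paragraph: one must control the auxiliary factor $|P(z)|^2\varphi(|z|^2)^{-2}$ produced by the integral transform and verify that it neither vanishes nor grows in a way that breaks either side of the sampling bound, since the density theorems apply to the bare weighted point evaluations with weight $K_\varphi(-|z|^2)$. Matching these weights using the growth of $P$ and the estimates for $\varphi$ is where the genuine work lies; after that, the density conclusion and its symplectic reinterpretation as $|\det C|\le 1$ are routine, exactly as in the classical Gr\"ochenig--Lyubarskii argument~\cite{GL2007,GL2009}.
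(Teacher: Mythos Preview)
The paper does not actually supply a proof of this statement; it is quoted as a known necessary density condition and the surrounding text only says that the proofs ``transfer to our case directly'' from~\cite{GL2007,GL2009}. In the classical theory the bound $s(\Lambda)\le 1$ is \emph{not} obtained via Fock-space sampling: it is the general density theorem for Gabor (coherent-state) systems, proved by comparison/trace arguments that use the group structure of the time-frequency shifts and hold for arbitrary windows. The Fock-space sampling and interpolation machinery is invoked in~\cite{GL2007,GL2009} only for the \emph{sufficient} direction (the analogue of Theorem~\ref{th:3.1}), not for this necessary bound.

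Your route therefore has a genuine gap beyond the comparability obstacle you already flag. Even granting that the frame inequality transfers to a sampling inequality in $\mathcal{F}_\varphi$, the necessary direction of Theorem~\ref{th:sam_and_inter_1} yields only $D^-(\Lambda)>\beta_\varphi$, where $\beta_\varphi$ is the $\varphi$-dependent critical density of the generalized Fock space. You then assert that ``$\beta_\varphi$ is precisely the reciprocal critical density'' so that the conclusion collapses to $|\det C|\le 1$, but nothing in the paper identifies $\beta_\varphi$ with the universal constant~$1$: for an entire $\varphi$ of order $\rho$ and degree $\sigma$ the Fock critical density depends on $\rho$ and $\sigma$ and there is no reason it should normalize to~$1$ for every admissible $\varphi$. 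At best your argument would produce $s(\Lambda)\le c_\varphi$ for some $\varphi$-dependent constant. To get the universal bound one must either import the group-theoretic density theorem (which is what the paper implicitly does) or supply an independent computation showing $\beta_\varphi=1$ in the relevant symplectic normalization---neither of which your proposal provides.
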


The density of $\Lambda$ is denoted by $d(\Lambda) = s(\Lambda)^{-1}$ and equals to the Beurling density. The adjoint lattice is denoted by $\Lambda^{\circ} = s({\Lambda})^{-1} \Lambda$ with size $s(\Lambda^{\circ})=s(\Lambda)^{-1}$.

\begin{theorem}
	\label{th:2.2}
	Consider a lattice $\Lambda\subset \mathbb{R}^2$ with adjoint $\Lambda^0$. The following statements are equivalent
	\begin{enumerate}[(i)]
		\item $\mathcal{G}(k,\Lambda)$ is a frame for $L^2(\mathbb{R})$.
		\item There exists a function $\gamma\in L^2(\mathbb{R})$ such that $\mathcal{G}(\gamma,\Lambda)$ is a Bessel sequence in $L^2(\mathbb{R})$ and the inner product for all $\mu\in\Lambda^0$ satisfies
		$\la k(\mu,\cdot), \gamma \ra_{L^2(\mathbb{R})} = \delta_{\mu,0}$ for all $\mu\in\Lambda^{\circ}$.
	\end{enumerate}
\end{theorem}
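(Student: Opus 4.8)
The plan is to recognize Theorem~\ref{th:2.2} as the analogue, in the generalized framework, of the Wexler--Raz biorthogonality relations combined with the Ron--Shen duality principle, and to transport the classical argument through the modified Bargmann transform. The guiding idea is that, since $\tilde{\mathcal{B}}$ is unitary (Theorem~\ref{thm:gerald}(i)) and intertwines $a,a^\ast$ with $D_\varphi, M_z$ via \eqref{eq:15}, the entire duality machinery of Gabor analysis can be run over $\cF_\varphi$ in place of the classical Fock space, with the reproducing kernel $\varphi(\overline{z}w)$ playing the role of $e^{\overline{z}w}$. Thus the frame property of $\cG(k,\Lambda)$ and the dual/biorthogonality data on $\Lambda^\circ$ become, respectively, a sampling statement and an interpolation statement in $\cF_\varphi$.

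First I would introduce, for two Bessel windows $k$ and $\gamma$, the mixed frame operator $S_{k,\gamma}f = \sum_{\lambda\in\Lambda}\la f, k(\lambda,\cdot)\ra_{L^2(\BR)}\,\gamma(\lambda,\cdot)$ together with its analysis/synthesis factorization. The central tool is the Janssen representation (the fundamental identity of Gabor analysis): I would show that $S_{k,\gamma}$ expands over the adjoint lattice as $S_{k,\gamma} = s(\Lambda)^{-1}\sum_{\mu\in\Lambda^\circ}\la \gamma, k(\mu,\cdot)\ra_{L^2(\BR)}\,U_\mu$, where $U_\mu$ denotes the covariant shift generated by the raising and lowering operators and $U_0=\mathrm{Id}$. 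Because the operators $\{U_\mu\}_{\mu\in\Lambda^\circ}$ are linearly independent, matching coefficients shows that the biorthogonality $\la k(\mu,\cdot),\gamma\ra_{L^2(\BR)}=\delta_{\mu,0}$ on $\Lambda^\circ$ is \emph{exactly equivalent} to $S_{k,\gamma}=\mathrm{Id}$.

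With this equivalence the two implications follow. For (ii)$\Rightarrow$(i): assuming $\cG(\gamma,\Lambda)$ is Bessel and the biorthogonality holds, the identity gives $S_{k,\gamma}=\mathrm{Id}$, so every $f$ admits the expansion $f=\sum_\lambda \la f,k(\lambda,\cdot)\ra_{L^2(\BR)}\, \gamma(\lambda,\cdot)$; combining the two Bessel bounds then produces the lower frame bound for $\cG(k,\Lambda)$, proving it is a frame. For (i)$\Rightarrow$(ii): if $\cG(k,\Lambda)$ is a frame its frame operator $S$ is boundedly invertible, and I would take $\gamma=S^{-1}k(0,\cdot)$, the canonical dual window; using that $S$ commutes with the lattice shifts, $\cG(\gamma,\Lambda)$ is again a frame, hence Bessel, and since $\gamma$ is a genuine dual we have $S_{k,\gamma}=\mathrm{Id}$, so the Janssen representation forces $\la k(\mu,\cdot),\gamma\ra_{L^2(\BR)}=\delta_{\mu,0}$ on $\Lambda^\circ$.

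The main obstacle I expect is establishing the Janssen representation in this generalized framework, since the classical proof rests on the Weyl--Heisenberg commutation relations and Poisson summation, whereas here $[a,a^\ast]$ need not equal $I$ and the states $k(z,\cdot)$ are not genuine time--frequency shifts. I would circumvent this by transferring the computation to $\cF_\varphi$ through $\tilde{\mathcal{B}}$: the frame inequality for $\cG(k,\Lambda)$ becomes a sampling inequality \eqref{eq:sampling_set} on $\Lambda$, the biorthogonality on $\Lambda^\circ$ becomes an interpolation condition, and the density theorems (Theorems~\ref{th:sam_and_inter_1}--\ref{th:sam_and_inter_3}) guarantee that the adjoint lattice sits at the critical density so that the sampling and interpolation pictures are genuinely dual. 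Verifying that the covariant shifts $U_\mu$ are conjugated by $\tilde{\mathcal{B}}$ to multiplication/derivative cocycles governed by $\varphi$ rather than by the exponential is the delicate point that replaces Poisson summation; the commutation of $S$ with lattice shifts needed in (i)$\Rightarrow$(ii) rests on the same covariance.
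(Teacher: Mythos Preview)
The paper does not supply a detailed proof of Theorem~\ref{th:2.2}; it simply states that, having built the analogous machinery (modified Bargmann transform, reproducing kernel, density theorems), ``we can transfer the proofs to our case directly'' from Gr\"ochenig--Lyubarskii~\cite{GL2007}. Your proposal is precisely such a transfer: you invoke the Janssen representation / Wexler--Raz biorthogonality argument and pull it through $\tilde{\mathcal{B}}$ to the $\cF_\varphi$ side, which is exactly the route the paper has in mind.

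One caution worth flagging: the obstacle you yourself identify is real and is not resolved in the paper either. The classical Janssen representation and the commutation of the frame operator with lattice shifts rely on the Heisenberg relation $[a,a^\ast]=I$, and the paper is explicit that this holds only in the special case giving the Gabor transform (see \eqref{eq:SFT}). For general $\varphi$ the operators $a,a^\ast$ need not generate the Heisenberg group, so the covariant shifts $U_\mu$ and the Poisson-summation step are not automatically available. The paper sidesteps this by implicitly restricting to kernels built from $a,a^\ast$ that still enjoy the needed covariance (the Dunkl example is again Heisenberg), but neither the paper nor your sketch actually supplies the replacement for Poisson summation in the non-Heisenberg case. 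So your plan matches the paper's, with the same gap left open.
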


The following theorem gives us a sufficient condition to form a frame in $L^2(\mathbb{R})$.

\begin{theorem}
	\label{th:3.1}
	Let $n\in \mathbb Z$, $n\geq 0$. If the size of $\Lambda$ satisfies $s(\Lambda) <(n+1)^{-1}$ then $\mathcal{G} (k,\Lambda)$ is a frame for $L^2(\mathbb{R})$.
\end{theorem}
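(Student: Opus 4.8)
The plan is to follow the Gr\"ochenig--Lyubarskii scheme, transporting the frame problem for $\mathcal{G}(k,\Lambda)$ from $L^2(\mathbb{R})$ to an interpolation problem for entire functions in $\mathcal{F}_\varphi$ by means of the generalized Bargmann transform. By Theorem~\ref{thm:gerald}(i) the map $\tilde{\mathcal{B}}$ is unitary, so the frame bounds for $\mathcal{G}(k,\Lambda)$ are unaffected when passing through $\tilde{\mathcal{B}}$; moreover the computation preceding Theorem~\ref{th:2.2} already rewrites $Vf(z)$ as $\varphi(|z|^2)^{-1}\,\overline{P(z)}\,F(z)$ with $F=\tilde{\mathcal{B}}f$ and $P$ the image of $\sum_j c_j(\bar z M_w+zD_\varphi)^j 1$, a factor which in the Gabor case $k=\pi_z h_n$ is the degree-$n$ expression of \eqref{eq:SFT}. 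Thus the whole question is governed by a scalar object in $\mathcal{F}_\varphi$ carrying a polynomial weight of degree $n$.

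The central step is to invoke the duality characterization of Theorem~\ref{th:2.2}: $\mathcal{G}(k,\Lambda)$ is a frame for $L^2(\mathbb{R})$ if and only if there exists $\gamma\in L^2(\mathbb{R})$ with $\mathcal{G}(\gamma,\Lambda)$ a Bessel sequence and $\langle k(\mu,\cdot),\gamma\rangle_{L^2(\mathbb{R})}=\delta_{\mu,0}$ for every $\mu$ in the adjoint lattice $\Lambda^{\circ}$. Hence it suffices to \emph{construct} such a dual window $\gamma$. Setting $\Gamma=\tilde{\mathcal{B}}\gamma$ and using Theorem~\ref{thm:gerald}(ii)--(iii), the biorthogonality relations become a prescribed-value interpolation problem for $\Gamma$ at the points of $\Lambda^{\circ}$ inside $\mathcal{F}_\varphi$, with the degree-$n$ polynomial weight inherited from $P$. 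In this way the frame property is reduced to the solvability of an interpolation problem on $\Lambda^{\circ}$.

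I would then feed this into the interpolation density theorem. The size hypothesis $s(\Lambda)<(n+1)^{-1}$ gives $s(\Lambda^{\circ})=s(\Lambda)^{-1}>n+1$, so $\Lambda^{\circ}$ is a sparse lattice; accounting for the degree-$n$ factor, its upper density obeys $D^{+}(\Lambda^{\circ})<\beta_\varphi$, which by Theorem~\ref{th:sam_and_inter_2} makes $\Lambda^{\circ}$ a set of interpolation for $\mathcal{F}_\varphi$. The interpolant supplied by that theorem, concretely through the Lagrange-type formula of Lemma~\ref{lemma:3.2} built from the generalized Weierstrass $\sigma$-function $g$, yields the required $\gamma=\tilde{\mathcal{B}}^{-1}\Gamma$. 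The outstanding Bessel requirement on $\mathcal{G}(\gamma,\Lambda)$ is then handled by the upper estimate $|K_\varphi(-|z|^2)g(z)|\le c_2\,\gamma(z)e^{c|z|\log|z|}$ of Lemma~\ref{lemma:ineq}, which controls the synthesis operator over the dense lattice $\Lambda$.

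The hard part will be the bookkeeping of the degree-$n$ polynomial factor coming from $P$, that is, showing it multiplies the critical density exactly by $n+1$, so that $s(\Lambda)<(n+1)^{-1}$ is precisely the threshold placing $D^{+}(\Lambda^{\circ})$ below $\beta_\varphi$. Equivalently, one must pass from the scalar interpolation theorem for $\mathcal{F}_\varphi$ to the polynomially weighted interpolation relevant here; in the classical situation this is where the $n$ zeros of the Hermite-type factor enter and force the $(n+1)$-fold gain in density, and the same counting/perturbation argument should carry over once the estimates of Lemmas~\ref{lemma:ineq} and~\ref{lemma:3.2} are in hand.
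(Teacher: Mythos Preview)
Your proposal is correct and coincides with the paper's own approach: the paper does not give an independent proof of this theorem but states that, having built the generalized Bargmann transform, the density theorems, and the generalized Weierstrass $\sigma$-function, ``we can transfer the proofs to our case directly'' from Gr\"ochenig--Lyubarskii~\cite{GL2007}. Your outline---Wexler--Raz duality (Theorem~\ref{th:2.2}) to reduce to constructing a biorthogonal $\gamma$ on the adjoint lattice, then solving the resulting interpolation problem in $\mathcal{F}_\varphi$ via Theorem~\ref{th:sam_and_inter_2} and Lemma~\ref{lemma:3.2}, with the $(n{+}1)$ factor coming from the degree-$n$ differential expression in \eqref{eq:SFT}---is exactly the GL2007 scheme the paper is invoking.
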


The following is an example of these results with the Dunkl operator from Subsection~\ref{sec:dunkl}. It is based on the Fock space representation of the Dunkl-Gabor transform (see, e.g.~\cite{Mejj2012}). We remark that this representation is easy to obtain from~\cite{Folland, GL2007}, since in this case the group generated by the multiplication operator and the Dunkl operator is still the Heisenberg group. 
\begin{example}
From Subsection~\ref{sec:dunkl}, we have that the Dunkl operator in the rank one case can be written as
$$ T_1 f(x) = \frac{\pr f}{\pr x } (x) + \kappa ~\frac{f(x)-f(-x)}{x},$$
associated to
$$\varphi(z) := e^z ~_1F_1(\kappa, 2\kappa+1; -2z),$$ with
$$\varphi_{2n}= \frac{\left(\frac{1}{2}\right)_n}{(2n)!\left(\kappa+\frac{1}{2}\right)_n} \qquad\mbox{and}\qquad \varphi_{2n+1}=\frac{\left(\frac{1}{2}\right)_{n+1}}{(2n+1)!\left(\kappa+\frac{1}{2}\right)_{n+1}},
$$ and where $(a)_0=1$ and $(a)_n = a(a+1) \cdots (a+n-1),$ for $a \in \mathbb{R} \setminus \mathbb{Z}^-_0.$

\begin{gather}
\varphi \left( |z|^2\right) = \sum_{j=0}^\infty \varphi_j  |z|^{2j}
= \sum_{k=0}^\infty \left[\varphi_{2k}  |z|^{4k} + \varphi_{2k+1} |z|^{4k+2} \right] \nonumber \\
= \sum_{k=0}^\infty \left[ \frac{ |z|^{4k} }{ (2k +1)(2k)! }   + \frac{   |z|^{4k+2} }{  (4k+6) (2k+1)! }  \right].
\end{gather}

This leads to the following representation of the Dunkl-Gabor transform
$$
		V_{h_n} f(z)
		= \la f, \pi_z h_n\ra
		= \frac{ e^{i\pi x y } }{\sqrt{\varphi_n }}\; \frac{1}{\varphi\left({\frac{|z|^2}{2}}\right)}  \sum_{k=0}^{n} {n \choose k} (-\pi \bar{z} )^k D_\varphi^k F(z)
$$
with $z=x+iy$.

By Theorem \ref{th:3.1}, choosing $z$ from a lattice $\Lambda$ of size less than $(n+1)^{-1}$ will give us a frame.

\end{example}

Another example is given in the following considerations.

\begin{example}
Le $A$ be an operator acting on the Fock space $\mathcal F.$ Then, this action is given by
\begin{gather*}
AF(z) := \iinner{\varphi(\ov{z} \cdot), AF }_{\mathcal F,\varphi} = \iinner{A^\ast_w \varphi(\ov{z} \cdot), F}_{\mathcal F, \varphi}
= \inner{\mathcal{B}^{-1} (A^\ast_w \varphi(\ov{z} \cdot)), f}_{L^2(\mathbb{R})} = \inner{k( \cdot ; z, A), f}_{L^2(\mathbb{R})},
\end{gather*} for all $F= \mathcal{B} f,$ where $f \in L^2(\mathbb{R}),$ and $k(t; z, A) := \mathcal{B}^{-1} (A^\ast_w \varphi(\ov{z} w))(t).$

Consider now $A$ of the form
$$A F(w) := \sum_{k=0}^n {n \choose k} (-\pi  w)^k D^k_{\varphi}F(w).$$
We obtain for its adjoint
$$A^\ast G(w) = \sum_{k=0}^n {n \choose k} (-\pi D_\varphi)^k [w^k G(w)]  = \sum_{k=0}^n {n \choose k} (-\pi)^k  D^{k}_\varphi [w^k G(w)].$$

When $G(w) = \varphi(\ov{z} w)$ we get
\begin{gather*}
D^{k}_\varphi \left( w^k \varphi(\ov{z} w) \right) = D^{k}_\varphi \left(  \sum_{j=0}^\infty \varphi_j  \ov{z}^j w^{j+k}  \right) = \sum_{j=0}^\infty \varphi_j  \ov{z}^j  D^{k}_\varphi \left(w^{j+k}  \right).
\end{gather*}
Computing these derivatives, we obtain
\begin{gather*}
D^{k}_\varphi \left(w^{j+k}  \right) =  \frac{\varphi_{j+k-1}}{\varphi_{j+k}} D^{k-1}_\varphi \left(w^{j+k-1}  \right) =  \frac{\varphi_{j+k-2}}{\varphi_{j+k}} D^{k-2}_\varphi \left(w^{j+k-2}  \right) = \ldots = \frac{\varphi_{j}}{\varphi_{j+k}} w^{j}.
\end{gather*} Hence, we get
$$A^\ast \varphi(\ov{z} w) = \sum_{k=0}^n {n \choose k} (-\pi)^k  \left( \sum_{j=0}^\infty \frac{\varphi_{j}^2} {\varphi_{j+k}}  (\ov{z}w)^j \right).$$
This means that we can consider the transform
$$
V_{h_n}f(z) := \inner{k(\cdot; z, A), f}_{L^2(\mathbb{R})}
$$
with kernel $k(t; z, A)=\sum_{k=0}^n {n \choose k} (-\pi)^k  \left( \sum_{j=0}^\infty \frac{\varphi_{j}^2} {\varphi_{j+k}}\ov{z}^j h_j(t) \right)$.\\

By Theorem~\ref{th:3.1} we have that a lattice $\Lambda$ gives us a frame if $s(\Lambda) <(n+1)^{-1}$ . By Theorem \ref{th:2.2}, we have now that there exists a $\gamma$ such that $\mathcal{G}(\gamma,\Lambda)$ is a Bessel sequence in $L^2(\mathbb{R})$ and the inner product satisfies $\la k(\cdot ; \mu, A ), \gamma \ra_{L^2(\mathbb{R})} = \delta_{\mu,0}$ for all $\mu$ in the adjoint lattice.

\end{example}

\


\textbf{Acknowledgement}:
The work of the second and third authors were supported by Portuguese funds through the CIDMA-Center for Research and Development in Mathematics and Applications, and the Portuguese Foundation for Science and Technology (“FCT–Funda\c c\~ao para a Ci\^encia e a Tecnologia”) within Project ref. UID/4106/2025.

\begin{small}

\end{small}

\end{document}